\documentclass[a4paper]{amsart}
\usepackage{amssymb,amsmath} 
\usepackage{mathrsfs} 
\usepackage{enumitem}
\usepackage[capitalize]{cleveref}
\usepackage{xcolor}
\newcounter{alph}

\newtheorem{theo}[alph]{Theorem}
\newcounter{alphp}

\newtheorem{theop}[alphp]{Theorem}
\newcounter{alphpp}

\newtheorem{theopp}[alphpp]{Theorem}
\numberwithin{equation}{section}
\newtheorem*{theadp}{Theorems A' -- D'}
\newtheorem*{theadpp}{Theorems C'' and D''}

\newtheorem{cor}[equation]{Corollary}
\newtheorem{lem}[equation]{Lemma}
\newtheorem{prop}[equation]{Proposition}
\newtheorem{thm}[equation]{Theorem}
\theoremstyle{definition}

\newtheorem{exa}[equation]{Example}

\newtheorem{rem}[equation]{Remark}


\def\N{\mathbb N}
\def\R{\mathbb R}

\def\Z{\mathbb Z}

\def\ve{\varepsilon}
\def\vf{\varphi}
\def\la{\langle}
\def\ra{\rangle}
\def\dv{\rm dv}

\newcommand{\diam}{\operatorname{diam}}

\newcommand{\supp}{\operatorname{supp}}


\setcounter{tocdepth}{1}

\begin{document}


\title[Diffusions, random walks, harmonic functions]
{Equivariant discretizations of diffusions, \\ random walks, and harmonic functions}
\author{Werner Ballmann}
\address
{WB: Max Planck Institute for Mathematics, Vivatsgasse 7, 53111 Bonn.}
\email{hwbllmnn\@@mpim-bonn.mpg.de}
\author{Panagiotis Polymerakis}
\address
{PP: Max Planck Institute for Mathematics, Vivatsgasse 7, 53111 Bonn.}
\email{polymerp@@mpim-bonn.mpg.de}

\thanks{We are grateful to Anna Erschler for pointing out a gap in the original version of the manuscript, which most notably led to the reformulation of Theorem \ref{trec}. We would like to thank Fran\c{c}ois Ledrappier, Henrik Matthiesen, Mikhail Zaidenberg,
and the referee for helpful comments.
We would also like to thank the Max Planck Institute for Mathematics and the Hausdorff Center for Mathematics in Bonn for their support and hospitality.}

\date{December 22, 2025}

\subjclass[2010]{53C99, 58J65, 60G50}
\keywords{Diffusion operator, random walk, Martin boundary, Poisson boundary, harmonic function, covering projection, properly discontinuous action, discretization}

\begin{abstract}
For covering spaces and properly discontinuous actions with compatible diffusion processes,
we discuss Lyons-Sullivan discretizations of the processes and the associated function theory.
\end{abstract}

\maketitle

\tableofcontents

\section{Introduction}
\label{intro}

We are interested in spaces of bounded or positive $\lambda$-harmonic functions
on Riemannian manifolds and, more generally,
spaces of bounded or positive harmonic functions of diffusion operators.
Our work is motivated by the articles \cite{Su} of Sullivan and \cite{LS} of Lyons and Sullivan.

To a sufficiently large discrete subset $X$ in a connected Riemannian manifold $M$, 
Lyons and Sullivan associate a family of probability measures $\mu=(\mu_y)_{y\in M}$ on $X$
which has a number of important properties.
Among others, Lyons and Sullivan show that
\begin{align}\label{muharm}
  h(y) = \mu_y(h) := \sum_{x\in X}\mu_y(x)h(x)
\end{align}
for all bounded harmonic functions $h$ on $M$ and $y\in M$.
Furthermore, the support of each of the measures $\mu_y$ is all of $X$.
We call $\mu$ the family of \emph{Lyons-Sullivan measures} or, as a shorthand, \emph{$LS$-measures}.
They depend on the choice of data, which we call \emph{$LS$-data},
and these exist if $X$ is \emph{$*$-recurrent} in the terminology of Lyons and Sullivan.
We use the same concept,
but in the context of diffusion operators and in the form used in \cite{BL2}
and develop and extend results from \cite{LS}, \cite{Ka2}, \cite{LZ}, and \cite{BL2} in different directions.

Let $M$ be a connected manifold and $L$ be an elliptic diffusion operator on $M$
that is symmetric on $C^\infty_c(M)$ with respect to a smooth volume element
(see \cref{susdop}).
We say that a smooth function $h$ on $M$ is \emph{$L$-harmonic} if $Lh=0$.

Let $(D_t)_{t\ge0}$ be the diffusion process on $M$ with generator $L$ (see \cref{susdop}).
We say that $M$ is \emph{$L$-complete}, \emph{$L$-recurrent}, or \emph{$L$-transient} if $(D_t)$ is stochastically complete, recurrent, or transient, respectively. 
Note that $M$ is either $L$-recurrent or $L$-transient
and that $L$-recurrence implies $L$-completeness. 

\begin{exa}\label{comrec}
If $M$ is compact, then $M$ is $L$-recurrent. 
\end{exa}

In what follows,
we state our main results in the case of a covering $\pi\colon M\to N$,
with group $\Gamma$ of covering transformations,
where the operator $L$ on $M$ is the pull back of a diffusion operator $L_0$ on $N$.
We have similar results in the case of a countable group, also denoted by $\Gamma$,
acting properly discontinuously and $L$-equivariantly on $M$.
In our final section, we also discuss related results for random walks,
that is, Markov chains on countable sets.

In the literature,
the case of normal Riemannian coverings (with $L,L_0$ the Laplacian) is usually considered.
Recall that $\pi$ is a normal covering if and only if $\Gamma$ acts transitively on the fibers of $\pi$.

We fix a fiber $X$ of $\pi$.
Then $X$ is $*$-recurrent in the sense of Lyons and Sullivan exactly in the case
where $N$ is $L_0$-recurrent,
and the latter is assumed throughout the following.
By \cref{comrec}, the assumption is satisfied whenever the base manifold $N$ is compact.

We let $\mu=(\mu_y)_{y\in M}$ be the $LS$-probability measures on $X$ 
associated to a choice of $LS$-data for $X$.
We say that a function $h$ on $X$ is \emph{$\mu$-harmonic} if it satisfies \eqref{muharm} for all $y\in X$.

\subsection{Main results in the cocompact case}
\label{suspos}
We let $\mathcal{H}^+(M,L)$ and $\mathcal{H}^+(X,\mu)$ be the cones
of positive $L$-harmonic functions on $M$ and positive $\mu$-harmonic functions on $X$,
respectively.
Constant functions on $M$ and $X$ are $L$-harmonic and $\mu$-harmonic, respectively.
If $M$ is $L$-recurrent, then any positive $L$-harmonic function on $M$ is constant.
Similarly, if the $\mu$-random walk on $X$ is recurrent,
then any positive $\mu$-harmonic function on $X$ is constant.

\begin{theo}\label{poha}
Suppose that $N$ is compact
and that $X$ is endowed with the family $\mu$ of $LS$-measures associated to appropriate $LS$-data.
Then
\begin{enumerate}
\item\label{poha1}
for any $h\in\mathcal{H}^+(M,L)$, the restriction $h|_X$ of $h$ to $X$ belongs to $\mathcal{H}^+(X,\mu)$.
More precisely, $h(y)=\mu_y(h)$ for all $y\in M$;
\item\label{poha2}
the restriction map
\begin{align*}
	\mathcal{H}^+(M,L)\to\mathcal{H}^+(X,\mu), \quad h \mapsto h|_X,
\end{align*}
is a $\Gamma$-equivariant isomorphism of cones.
\end{enumerate}
\end{theo}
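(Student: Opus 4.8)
The plan is to treat \eqref{muharm} as the bridge between the two cones and to derive both parts from the single structural fact that, for every $h\in\mathcal H^+(M,L)$, one has $h(y)=\mu_y(h)$ at \emph{every} $y\in M$. Granting this, part \eqref{poha1} is immediate: restricting the identity to $y\in X$ says exactly that $h|_X$ satisfies \eqref{muharm}, so $h|_X\in\mathcal H^+(X,\mu)$, and the statement for all $y\in M$ is the asserted refinement. The injectivity in part \eqref{poha2} then comes for free, since $h$ is recovered from $h|_X$ via $h(y)=\mu_y(h|_X)$; the map is visibly additive and positively homogeneous, hence a morphism of cones, and $\Gamma$-equivariance follows by choosing the $LS$-data $\Gamma$-invariantly, so that $\gamma_*\mu_y=\mu_{\gamma y}$ and therefore $(\gamma h)|_X=\gamma(h|_X)$.

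First I would establish $h=\mu_\bullet(h)$ for positive $h$. Running the diffusion from $y$ and stopping at the $LS$-capture time $T$ with $D_T\in X$, the process $h(D_{t\wedge T})$ is a nonnegative local martingale, hence a supermartingale, so Fatou gives $\mu_y(h)=\mathbb E_y[h(D_T)]\le h(y)$; in particular the series in \eqref{muharm} converges. To promote this to equality I would pass through the Martin boundary of $M$, available because the compactness of $N$ yields a uniform Harnack inequality: it suffices to verify $h=\mu_\bullet(h)$ for a minimal positive harmonic $h$ and then integrate the representing measure (Tonelli). For such $h$ the harmonic function $\mu_\bullet(h)$ lies between $0$ and $h$, so minimality forces $\mu_\bullet(h)=c\,h$ with $c\in[0,1]$. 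The case $c=0$ is impossible: it would make $\mu_y(h)\equiv0$, hence (full support of the $\mu_y$) $h|_X\equiv0$, hence $h\equiv0$ by the strong minimum principle for $L$ (valid since $L1=0$), contradicting minimality.

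The crux is to exclude $0<c<1$, i.e.\ to rule out leakage of $h$-mass before capture. Here the recurrence of $N$ enters decisively: the projected diffusion returns to a fixed neighborhood of $\pi(X)$ at times tending to infinity, so the lifted process meets the capture sets of $X$ infinitely often, each attempt succeeding with a probability bounded below by the Harnack constants built into the $LS$-data; hence $T<\infty$ almost surely. Since each elementary step preserves the expectation of the harmonic $h$ \emph{exactly}---exit from a small ball by its mean-value property, capture by the fair Bernoulli rule---the stopped process $h(D_{t\wedge T})$ is a genuine martingale of constant expectation $h(y)$. Splitting $h(y)=\mathbb E_y[h(D_T);T\le t]+\mathbb E_y[h(D_t);T>t]$ and letting $t\to\infty$, the no-leak estimate $\mathbb E_y[h(D_t);T>t]\to0$ yields $h(y)=\mu_y(h)$, that is $c=1$. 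I expect this uniform-integrability step to be the main obstacle: for bounded $h$ it is immediate, and all the novelty beyond the classical identity \eqref{muharm} is concentrated here.

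It remains to prove surjectivity in part \eqref{poha2}. Given $f\in\mathcal H^+(X,\mu)$ I would define its harmonic extension by $h(y):=\mu_y(f)=\sum_{x\in X}\mu_y(x)f(x)$. Finiteness at the base point is the $\mu$-harmonicity $f(x_0)=\mu_{x_0}(f)<\infty$, and the uniform Harnack inequality makes $y\mapsto\mu_y(x)$ comparable to its value at $x_0$ uniformly in $x$; together with $\Gamma$-equivariance (applied to the positive $\mu$-harmonic function $\gamma^{-1}f$) this bounds $\mu_y(f)$ locally and shows the series converges locally uniformly on all of $M$. Each summand $y\mapsto\mu_y(x)$ is $L$-harmonic, because the $LS$-family satisfies the local mean-value relation $\mu_y=\int\mu_z\,d\omega_y(z)$ over the harmonic measure $\omega_y$ of small balls; a locally uniform limit of harmonic functions is harmonic, so $h\in\mathcal H^+(M,L)$. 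Finally $h|_X=f$, since $h(x)=\mu_x(f)=f(x)$ by the $\mu$-harmonicity of $f$. Thus restriction is onto, and together with the above it is a $\Gamma$-equivariant isomorphism of cones.
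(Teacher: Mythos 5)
Your skeleton --- prove $h(y)=\mu_y(h)$ for every $h\in\mathcal{H}^+(M,L)$ and all $y\in M$, deduce part \ref{poha1} and injectivity, then get surjectivity by extending $f\in\mathcal{H}^+(X,\mu)$ via $h(y)=\mu_y(f)$ --- is the same as the paper's, and several ingredients (Fatou giving $\mu_y(h)\le h(y)$, reduction to minimal functions by the Martin representation, equivariance from equivariant data) are sound. But the two steps carrying the real content do not hold as stated. First, to invoke minimality you treat $\mu_\bullet(h)$ as an $L$-harmonic function. It is not, in general: by \cref{lsm2}.\ref{lsm2e} the measures $\mu_y$ satisfy the mean value property only for stopping times $T\le R^F$, i.e.\ off $F$; across $\partial F$ the function $y\mapsto\mu_y(h)$ is merely superharmonic, with a kink along $\partial F$. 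Minimality of $h$ then yields at best that the greatest harmonic minorant of $\mu_\bullet(h)$ is $ch$, not that $\mu_\bullet(h)=ch$, so your trichotomy in $c$ collapses. The same error invalidates your surjectivity argument, where you assert that each summand $y\mapsto\mu_y(x)$ is $L$-harmonic. Concretely, in the very setting of the theorem with $M$ $L$-recurrent (e.g.\ the $\Z^2$-cover of a flat torus), $y\mapsto\mu_y(x)$ is positive, bounded, and nonconstant (it equals $\mu_x(x)>0$ at $x$ and tends to $0$ along the orbit of $x$, by equivariance and summability of $\mu_x$), so it cannot be $L$-harmonic by \cref{swel2}. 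The extension $\mu_\bullet(f)$ of a $\mu$-harmonic $f$ \emph{is} $L$-harmonic --- this is \cref{harmp} --- but the proof needs the $\mu$-harmonicity of $f$, fed through \cref{lsm2}.\ref{lsm2c}, to cancel the kinks; it is not a term-by-term fact.

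Second, and decisively, the ``no-leak'' step is circular. The elementary steps of the LS procedure are (i) balayage onto $F$, (ii) exit from the relatively compact $V_x$, (iii) the Bernoulli trial. Steps (ii) and (iii) do preserve the expectation of a positive $L$-harmonic $h$ (note, though, that the path never literally hits $X$: one has $\mu_y(h)=\tilde E_y[h(X_{N_1})]$, not $E_y[h(D_T)]$ with $D_T\in X$), but step (i) preserves it if and only if $h$ is swept by $F$; for positive $h$, \cref{swel}.\ref{swelp} gives only $\beta(y,F)(h)\le h(y)$, and by \cref{lsh3} being swept by $F$ is \emph{equivalent} to the identity $h=\mu_\bullet(h)$ you are trying to prove. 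So asserting that $h(D_{t\wedge T})$ is a genuine martingale of constant expectation assumes the theorem, and the estimate $E_y[h(D_t);T>t]\to0$, which you yourself flag as ``the main obstacle,'' is the whole theorem rather than a technical remainder; recurrence of $N$ plus Harnack alone cannot deliver it. A telling symptom is that your argument never uses balancedness of the $LS$-data (condition (D5)), which is part of what ``appropriate'' means and which the paper cannot dispense with. The paper closes exactly this gap through the Martin boundary: balancedness gives the Green function identity $G=BCg$ (\cref{Greengreen}), so the Martin kernels of $(M,L)$ and $(X,\mu)$ coincide on $X$; $*$-uniformity of $X$ (from compactness of $N$, \cref{lsc}) and Harnack show that every $\eta\in\partial_L^{\min}M$ is a limit of kernels $K(\cdot,x_n)$ with $x_n\in X$, whence $K(\cdot,\eta)|_X$ is a Martin limit of the random walk and therefore $\mu$-harmonic on $X$; the dichotomy of \cref{lsh3} then upgrades equality on $X$ to $K(y,\eta)=\mu_y(K(\cdot,\eta))$ for all $y\in M$; and integrating the representing measure gives both parts of the theorem (\cref{martin}, \cref{harmc}). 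Without this chain, or a genuine substitute that uses (D5), your proposal establishes only the inequality $\mu_y(h)\le h(y)$ and the correct bookkeeping around it.
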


The meaning of the attribute \lq appropriate\rq\ (here and below) will be specified in \cref{secdico}.
We show in \cref{lsc} that appropriate $LS$-data always exist.

In the case of the Laplacian and with a different choice of $LS$-data,
\cref{poha}.\ref{poha1} is due to Lyons and Sullivan \cite[Theorem 6]{LS}.
\cref{poha}.\ref{poha2} is new even for the Laplacian and is a consequence
of the identification of the minimal parts of Martin boundaries in \cref{mami}.
Note that, in \cite{BL2}, positive $L$-harmonic functions on $M$ that are swept by $F$
(see \eqref{swel4} for this terminology) could be handled.
But in our approach, we can discuss all positive $L$-harmonic functions,
since they can be expressed as integrals of so-called minimal ones.
This is where \cref{mami} comes into play.

Using that our choice of $LS$-data leads to symmetric random walks on $X$
and that positive harmonic functions of irreducible symmetric random walks on nilpotent groups are constant,
\cref{poha} implies \cite[Theorem 1]{LS} which asserts,
in the case where $\pi$ is a normal Riemannian covering with compact base,
that any positive harmonic function on $M$ is constant
if the group of covering transformations of $\pi$ is nilpotent.

In the case where $M$ is $L$-transient,
the Martin boundary $\partial_LM$ and Martin compactification $\mathcal{M}(M,L)=M\cup\partial_LM$ of $M$ are defined
(see \cref{submarb} below).
Similarly, if the $\mu$-random walk on $X$ is transient,
we have the Martin boundary $\partial_\mu X$ and Martin compactification $\mathcal{M}(X,\mu)=X\cup\partial_\mu X$ of $X$.
The \emph{minimal parts} $\partial_L^{\min}M$ and $\partial_\mu^{\min}X$
of the Martin boundaries consist of minimal positive $L$-harmonic
and minimal positive $\mu$-harmonic functions,
respectively.
Here \emph{minimal} refers to the property that these positive $L$-harmonic
respectively $\mu$-harmonic functions dominate only multiples of themselves
and do not dominate any other positive $L$-harmonic respectively $\mu$-harmonic function.
The following result is new even in the case of the Laplacian.

\begin{theo}\label{mami}
Suppose that $N$ is compact, that $M$ is $L$-transient,
and that $X$ is endowed with the family $\mu$ of $LS$-measures associated to appropriate $LS$-data.
Then
\begin{enumerate}
\item\label{mami1}
the $\mu$-random walk on $X$ is transient
and the inclusion $X\to M$ extends to a $\Gamma$-equivariant homeomorphism
\begin{align*}
	\partial_\mu^{\min}X\to \partial_L^{\min}M;
\end{align*}
\item\label{mami2}
$\partial_L^{\min}M=\partial_LM$ if and only if $\partial_\mu^{\min}X=\partial_\mu X$. 
\end{enumerate}
\end{theo}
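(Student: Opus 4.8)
The plan is to deduce everything from the $\Gamma$-equivariant cone isomorphism $\Phi\colon\mathcal{H}^+(M,L)\to\mathcal{H}^+(X,\mu)$, $h\mapsto h|_X$, of \cref{poha}, together with the comparison between the Green's function of $L$ on $M$ and that of the $\mu$-walk on $X$ coming from the $LS$-machinery. Fix a base point $o\in X\subseteq M$ and normalize positive harmonic functions by their value at $o$; since $h(o)=\mu_o(h|_X)$, the normalizations on $M$ and on $X$ match under $\Phi$. I would then use the standard Martin-theoretic fact that the minimal boundary is intrinsic to the cone: after normalization at $o$, $\partial_L^{\min}(M,L)$ (resp.\ $\partial_L^{\min}(X,\mu)$) is exactly the set of extreme points of the convex base $\mathcal{H}^+_1(M,L)$ (resp.\ $\mathcal{H}^+_1(X,\mu)$), equipped with the topology of local uniform (resp.\ pointwise) convergence of normalized Martin kernels.

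First I would upgrade $\Phi$ to a homeomorphism for these topologies. The forward map is continuous, since restriction turns local uniform convergence on $M$ into pointwise convergence on $X$. For the inverse $f\mapsto(y\mapsto\mu_y(f))$ I would use that $N$ is compact: as $L$ is the pull-back of an operator on $N$, positive $L$-harmonic functions on $M$ obey a uniform local Harnack inequality, so a sequence $h_n\in\mathcal{H}^+_1(M,L)$ with $h_n|_X\to f$ pointwise is locally bounded and equicontinuous. Arzel\`a--Ascoli and the injectivity of $\Phi$ then force $h_n$ to converge locally uniformly to the unique $h\in\mathcal{H}^+_1(M,L)$ with $h|_X=f$. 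Being an affine homeomorphism of the bases, $\Phi$ carries extreme points to extreme points; the asserted map $\partial_L^{\min}(X,\mu)\to\partial_L^{\min}(M,L)$ is the homeomorphism induced by its inverse $\Phi^{-1}$, and its $\Gamma$-equivariance is inherited from that of $\Phi$.

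The geometric input, and the step I expect to be the main obstacle, is the identification of the walk's Green's function with the restriction of the diffusion's Green's function: for $x'\in X$ the function $G_L(\cdot,x')|_X$ should be proportional to $G_\mu(\cdot,x')$, so that the Martin kernels agree, $K_\mu(x,x')=K_L(x,x')$ for all $x,x'\in X$. This is where the fine structure of the $LS$-data enters, and I would draw on the Green's function comparison developed earlier; the delicate point is the behaviour near the pole $x'$ and the fact that $G_L(\cdot,x')$ is only $L$-harmonic off $x'$, so that \cref{poha} must be applied in a local form. Granting the comparison, transience of the walk follows at once: each $\mu_y$ has full support, so the walk is irreducible and hence transient as soon as $G_\mu(x,x')<\infty$ for some $x\ne x'$, which holds because $G_L(x,x')<\infty$ off the diagonal by $L$-transience of $M$. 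Moreover $K_\mu(\cdot,x')=\Phi\bigl(K_L(\cdot,x')\bigr)$, so for $x_n\in X$ one has $x_n\to\eta$ in the $(X,\mu)$-Martin topology, i.e.\ $K_\mu(\cdot,x_n)\to K_\mu(\cdot,\eta)$ pointwise, if and only if $\Phi(K_L(\cdot,x_n))$ converges; by the homeomorphism of the previous paragraph this happens exactly when $K_L(\cdot,x_n)$ converges locally uniformly, that is, when $x_n\to\xi$ in the $(M,L)$-Martin topology, with $\xi$ and $\eta$ matched by $\Phi$. Hence the boundary homeomorphism is precisely the extension of the inclusion $X\to M$, completing \cref{mami}.\ref{mami1}.

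Finally, for \cref{mami}.\ref{mami2} I would use that $\partial_L^{\min}(M,L)=\partial_LM$ holds exactly when every Martin kernel is extreme in $\mathcal{H}^+_1(M,L)$, i.e.\ when the compact set $\mathcal{K}_M$ of all Martin kernels coincides with the set of extreme points (the inclusion $\supseteq$ being automatic by Milman's theorem). The identity $K_\mu(\cdot,x')=\Phi(K_L(\cdot,x'))$, passed to boundary limits as above, shows that $\Phi$ maps $\mathcal{K}_M$ onto $\mathcal{K}_X$, while the affine homeomorphism property shows it maps extreme points onto extreme points. Consequently $\mathcal{K}_M$ equals the extreme points of its base if and only if $\mathcal{K}_X$ does, which is exactly the equivalence $\partial_L^{\min}(M,L)=\partial_LM\iff\partial_L^{\min}(X,\mu)=\partial_\mu X$.
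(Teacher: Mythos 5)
There are two problems with your proposal, one structural and one mathematical.

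First, circularity. Your cornerstone is the cone isomorphism $\Phi\colon\mathcal{H}^+(M,L)\to\mathcal{H}^+(X,\mu)$ of \cref{poha}, but in this paper that theorem is not an independent input: \cref{poha} is deduced in \cref{secdico} from \cref{harmc}, which is itself a corollary of \cref{martin} --- and \cref{martin} is precisely the technical form of the statement you are trying to prove. So your derivation runs the paper's logic backwards: the hard content of \cref{mami}, namely that restriction carries minimal positive $L$-harmonic functions bijectively onto minimal positive $\mu$-harmonic functions, is exactly what is concealed in the citation of \cref{poha}.\ref{poha2}. (Part \ref{poha1} alone, the Lyons--Sullivan-type statement, would not suffice: you use injectivity and surjectivity of the restriction map throughout, in the Arzel\`a--Ascoli step and in the extreme-point transfer.) The paper instead argues directly from the Green-function identity $G=BCg$ of \cref{Greengreen} together with the $*$-uniformity of $X$ (\cref{lsc}): given $\eta\in\partial_L^{\min}M$ realized by a sequence $y_n\in M$, one chooses $x_n\in X$ with $y_n\in K_{x_n}$, uses the uniform Harnack constant to get $K(\cdot,x_n)<C^2K(\cdot,y_n)$, and then uses \emph{minimality} of $K(\cdot,\eta)$ to force $K(\cdot,x_n)\to K(\cdot,\eta)$; minimality of the restriction is then proved by an extension argument, and \cref{harmc}, hence \cref{poha}, comes out at the end, not at the beginning.

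Second, even granting \cref{poha}, your argument for \cref{mami}.\ref{mami2} has a genuine gap. Your limit-transfer step correctly handles only sequences $x_n\in X$; it yields that restriction maps $\partial_LM\cap\bar X$ onto $\partial_\mu X$ (this is essentially \cref{martin1}), not that it maps all of $\mathcal{K}_M=\partial_LM$ onto $\mathcal{K}_X=\partial_\mu X$. A boundary point $\xi=\lim K(\cdot,y_n)$ with $y_n\in M$ need not lie in $\bar X$: choosing nearby points $x_n\in X$ and passing to a subsequential limit $\zeta$ of $K(\cdot,x_n)$, Harnack only gives $C^{-2}K(\cdot,\xi)\le K(\cdot,\zeta)\le C^2K(\cdot,\xi)$, and two boundary points with comparable kernels need not coincide unless one of them is minimal. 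This is exactly why the paper's proof of part \eqref{mami2} invokes the hypothesis $\partial_\mu^{\min}X=\partial_\mu X$: then the limit $\zeta$ is minimal, the sandwich together with the normalization at $x_0$ forces $K(\cdot,\xi)=K(\cdot,\zeta)$ on $X$, and minimality of $\xi$ follows from part \eqref{mami1}. Your unconditional claim that $\Phi$ maps $\mathcal{K}_M$ onto $\mathcal{K}_X$ is therefore unjustified; the implication from $\partial_L^{\min}M=\partial_LM$ to $\partial_\mu^{\min}X=\partial_\mu X$ does follow from your setup, but the converse implication is the one that needs the missing $*$-uniformity/Harnack/minimality argument, which appears nowhere in your proposal.
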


\cref{mami} follows from \cref{martin},
in which we prove corresponding statements in a more general situation.
The key point is that we can compare sequences of Martin kernels $K(.,y_n)$,
associated to $L$ respectively $\mu$ and converging to minimal positive $L$-harmonic
respectively $\mu$-harmonic functions in the corresponding Martin boundaries,
because of the cocompactness of the covering and the proportionality
of the associated Green functions (established in \cref{Greengreen}).

\subsection{Main results in the recurrent case}
\label{susbou}
In contrast to the cocompact case,
our arguments and results in this subsection
are mostly straightforward extensions of corresponding arguments and results in the literature,
where the case of the Laplacian and normal Riemannian coverings is usually considered.
Nevertheless, there are differences and twists in the setup and argumentation
which may make our exposition more accessible. 

We denote by $\mathcal{H}^\infty(M,L)$ and $\mathcal{H}^\infty(X,\mu)$
the spaces of bounded $L$-harmonic functions on $M$ and bounded $\mu$-harmonic functions on $X$, respectively.
Note that $\mathcal{H}^\infty(M,L)$ vanishes whenever $M$ is $L$-recurrent
and that $\mathcal{H}^\infty(X,\mu)$ vanishes whenever the $\mu$-random walk on $X$ is recurrent.

\begin{theo}\label{boha}
Suppose that $N$ is $L_0$-recurrent
and that $X$ is endowed with the family $\mu$ of $LS$-measures associated to appropriate $LS$-data.
Then
\begin{enumerate}
\item\label{bohar}
for any $h\in\mathcal{H}^\infty(M,L)$,
the restriction $h|_X$ of $h$ to $X$ belongs to $\mathcal{H}^\infty(X,\mu)$;
\item\label{bohap}
for any $h\in\mathcal{H}^+(M,L)$,
either $h$ is $\mu$-harmonic or strictly $\mu$-superharmonic on $X$.
More precisely, either $h(y)=\mu_y(h)$ for all $y\in M$ or $h(y)>\mu_y(h)$ for all $y\in M$.
\item\label{bohab}
the restriction map
\begin{align*}
	\mathcal{H}^\infty(M,L)\to\mathcal{H}^\infty(X,\mu), \quad h \mapsto h|_X,
\end{align*}
is a $\Gamma$-equivariant isomorphism of vector spaces.
\item\label{bohat}
$M$ is $L$-transient if and only if the $\mu$-random walk on $X$ is transient.
\end{enumerate}
\end{theo}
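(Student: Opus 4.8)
The plan is to establish the four assertions in turn, relying throughout on two fundamental features of the $LS$-measures coming from their construction: the mean value identity \eqref{muharm}, i.e. $h(y)=\mu_y(h)$ for every bounded $L$-harmonic $h$ and every $y\in M$; and the fact that for each fixed $x\in X$ the function $y\mapsto\mu_y(x)$ is nonnegative and $L$-harmonic on $M$. From the latter, the \emph{Poisson transform} $Pf(y):=\mu_y(f)=\sum_{x\in X}\mu_y(x)f(x)$ carries any bounded (resp.\ nonnegative) function $f$ on $X$ to a bounded (resp.\ nonnegative) $L$-harmonic function on $M$, the series converging locally uniformly by Harnack's principle. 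Assertion \ref{bohar} is then immediate: for $h\in\mathcal H^\infty(M,L)$, \eqref{muharm} gives $h(y)=\mu_y(h)$ for all $y\in M$, in particular for $y\in X$, so $h|_X$ is $\mu$-harmonic, and it is bounded since $h$ is.

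For \ref{bohap} I would first show $h(y)\ge\mu_y(h)$ for every $h\in\mathcal H^+(M,L)$ and $y\in M$: running the diffusion from $y$ until the first recording in $X$, the nonnegative $L$-harmonic function $h$ makes $h(D_t)$ a supermartingale, and Fatou's lemma applied to the $LS$-recording mechanism yields that the law $\mu_y$ of the first recorded point satisfies $\mu_y(h)\le h(y)$. In particular the Poisson series for $f=h|_X$ converges, so $v:=h-P(h|_X)$ is a nonnegative $L$-harmonic function on $M$. The asserted dichotomy now follows from the strong minimum principle for the elliptic operator $L$: since $L$ annihilates constants it has no zeroth order term, so on the connected manifold $M$ a nonnegative $L$-harmonic function that vanishes at one point vanishes identically. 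Thus either $v\equiv0$, giving $h(y)=\mu_y(h)$ for all $y$, or $v>0$ everywhere, giving $h(y)>\mu_y(h)$ for all $y$.

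For \ref{bohab}, well-definedness and linearity are clear from \ref{bohar}, and $\Gamma$-equivariance holds because appropriate $LS$-data are chosen $\Gamma$-equivariantly, so that $\mu_{\gamma y}(\gamma x)=\mu_y(x)$ and restriction commutes with the deck action. Injectivity is again \eqref{muharm}: if $h|_X=0$ then $h(y)=\mu_y(h)=0$ for all $y$. For surjectivity, given $f\in\mathcal H^\infty(X,\mu)$ I set $h:=Pf$, a bounded $L$-harmonic function by the Poisson transform property; then $h|_X=f$ because $h(y)=\mu_y(f)=f(y)$ for $y\in X$ by $\mu$-harmonicity of $f$. Hence the map is a bijection, and so an isomorphism of vector spaces.

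Assertion \ref{bohat} I would prove probabilistically, identifying the $\mu$-random walk on $X$ with the sequence of points recorded by the $LS$-discretization of $(D_t)$. Since $\supp\mu_y=X$ the walk is irreducible, so transience is equivalent to some (every) state $x\in X$ being visited finitely often almost surely. A recording at $x$ can occur only during an excursion of the diffusion through the ball $V_x$ of the $LS$-data, and distinct excursions trigger independent Bernoulli recordings of fixed positive probability. If $M$ is $L$-recurrent the diffusion enters $V_x$ during infinitely many excursions, so by Borel--Cantelli it is recorded at $x$ infinitely often and the walk is recurrent; if $M$ is $L$-transient the diffusion visits the relatively compact set $V_x$ during only finitely many excursions, so it is recorded at $x$ finitely often and the walk is transient. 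I expect the main obstacle to lie in the harmonicity input for \ref{bohap} and \ref{bohab}, namely knowing that $y\mapsto\mu_y(x)$ is genuinely $L$-harmonic on all of $M$ and that the Poisson series converges to an $L$-harmonic function; once this is secured the dichotomy and the inverse map are routine. In \ref{bohat} the delicate points are the excursion bookkeeping and the independence needed for Borel--Cantelli, together with the fact that an $L$-recurrent diffusion returns to every nonempty open set infinitely often.
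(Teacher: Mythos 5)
Your proposal hinges on the claim that, for fixed $x\in X$, the function $u_x(y)=\mu_y(x)$ is $L$-harmonic on all of $M$, so that the Poisson transform $Pf=\sum_{x\in X}f(x)u_x$ carries bounded or positive functions on $X$ to $L$-harmonic functions on $M$. This claim is false, and it is exactly where the difficulty of the theorem sits. By \cref{lsm2}.\ref{lsm2e}, $u_x$ is $L$-harmonic only on $M\setminus F$; it is not harmonic across $F$, and in fact it is not even continuous at the points of $X$. Indeed, comparing \cref{lsm2}.\ref{lsm2c} with \cref{lsm2}.\ref{lsm2d} and using that the exit densities depend continuously on the starting point, one finds that as $y\to x$ inside $F_x\setminus\{x\}$,
\begin{align*}
  \mu_y(x)\;\longrightarrow\;\mu_x(x)+\tfrac1C\bigl(1-\mu_x(x)\bigr)\;>\;\mu_x(x),
\end{align*}
a genuine jump (recall $\mu_x(x)<1$ since $\mu_x$ charges every point of $X$). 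The probabilistic reason is visible in \eqref{mm2}: a path started at $x$ must first leave $V_x$ before it can ever be recorded at $x$, whereas a path started at $y\in F_x\setminus\{x\}$ is recorded at $x$ immediately with probability $1/C$. Concretely, for Brownian motion on $\R$ with $X=\Z$ and translation-invariant data, $y\mapsto\mu_y(0)$ is bounded, is at least $1/C$ on $F_0\setminus\{0\}$, and tends to $0$ along $y=k\to\infty$ (since $\mu_k(0)=\mu_0(-k)$ and $\sum_n\mu_0(n)=1$); a bounded harmonic, i.e.\ affine, function on $\R$ would be constant. Consequently your proofs of \ref{bohap} and \ref{bohab} collapse: in \ref{bohap}, $v=h-P(h|_X)$ need not be $L$-harmonic (it is harmonic only off $F$), so the strong minimum principle does not yield the dichotomy; in \ref{bohab}, the harmonicity of $Pf$ for $f\in\mathcal{H}^\infty(X,\mu)$ is true but cannot be obtained term by term — it is precisely the nontrivial content of \cref{harmp} (Theorem 1.11 of \cite{BL2}), whose proof must use the $\mu$-harmonicity of $f$ itself (via \cref{lsm2}.\ref{lsm2c}--\ref{lsm2e} and a gluing argument, or via path discretization and martingale convergence). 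The paper instead gets \ref{bohap} from \cref{lsh3}, whose proof is probabilistic: if $h$ is swept by $F$, the telescoping identity built on \cref{lsh1} gives equality everywhere; if not, the strict sweeping deficit $\beta(z,F)(h)<h(z)$ holds on a whole component of $M\setminus F$ and is felt from every starting point with positive probability.

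The rest of your proposal is in better shape. Assertion \ref{bohar} is correct once one grants \eqref{muharm} for bounded $L$-harmonic functions, but for diffusion operators that identity is itself something to prove (in the paper it is \cref{swel}.\ref{swelb} combined with \cref{lsh3}.\ref{lsh3a}), so it should not be quoted as part of the construction; your supermartingale/Fatou sketch for the inequality $\mu_y(h)\le h(y)$ is essentially the right substitute, provided you pass from the recorded point $X_{N_1}$ to the exit distribution at time $T_1$ via \cref{lsm4} and the mean value property $h(x)=\ve_x^{V_x}(h)$. Your treatment of \ref{bohat} is a genuinely different and, as far as I can see, correct route: the paper deduces \ref{bohat} from the Green function identity of \cref{Greengreen}, which forces the choice of \emph{balanced} $LS$-data, whereas your excursion/Borel--Cantelli argument (conditionally on the path, recordings at $x$ at successive visits to $F_x$ are independent coin flips with success probability greater than $1/C^2$; $L$-recurrence produces infinitely many visits to $F_x$, $L$-transience only finitely many) works for arbitrary $LS$-data satisfying \ref{fv}, at the price of the standard facts that a transient diffusion almost surely leaves every compact set for good and that the recorded sequence is a realization of the $\mu$-walk (\cref{lsmp}).
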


For $h\in\mathcal{H}^+(M,L)$ as in \eqref{bohap}, we say that \emph{$h$ is swept by $F$} or that $h\in\mathcal{H}_F^+(M,L)$, if the first alternative,
$h(y)=\mu_y(h)$ for all $y\in M$ holds.

In the case of the Laplacian,
\eqref{bohar} is part of \cite[Theorem 5]{LS},
\eqref{bohap} is \cite[Theorem 1.10]{BL2},
\eqref{bohab} follows from \cite[Theorem 1.11]{BL2} and
(with other assumptions on the $LS$-data) from \cite[Theorem 1]{Ka2},
and \eqref{bohat} is part of \cite[Theorem 2.7]{BL2}.
In our arguments, we follow \cite{BL2}. 

Lyons and Sullivan also discretize random Brownian paths in $M$ to $\mu$-random sequences in $X$,
see \cite[Section 8]{LS}.
In the proof of \cite[Theorem 1]{Ka2}, their procedure was analyzed by Kaimanovich
and applied (in \cite[Remark 1]{Ka2}) to Poisson boundaries (see \cref{subpoi} below).

\begin{theo}\label{trec}
Suppose that $N$ is $L_0$-recurrent and
and that $X$ is endowed with the family $\mu$ of $LS$-measures associated to appropriate $LS$-data.
Then
\begin{enumerate}
\item\label{recp}
$LS$-path discretization induces a $\Gamma$-equivariant isomorphism
\begin{align*}
  \mathcal{P}(M,L)\to\mathcal{P}(X,\mu)
\end{align*}
of Poisson boundaries.
\item\label{recm}
if $M$ is $L$-transient, restriction $R\colon\partial_L M \cap \bar{X} \to\partial_\mu X$ of Martin kernels from $M$ to $X$ is $\Gamma$-equivariant, continuous, closed, and surjective, where $\bar{X}$ stands for the closure of $X$ in $\mathcal{M}(M,L)$. Furthermore,
\begin{align*}
	R\colon\partial_L M \cap \bar{X} \cap\mathcal H^+_F(M,L) \to \partial_\mu X\cap\mathcal H^+(X,\mu)
\end{align*}
is a $\Gamma$-equivariant homeomorphism.
\end{enumerate}
\end{theo}

In the case of the Laplacian, \eqref{recp} follows from \cite[Theorem 1 and Remark 1]{Ka2}.
Assertion \eqref{recm} corrects the corresponding statements \cite[Theorem 2.8]{BL2} and \cite[Theorem D(2)]{BP}.
The proofs in \cite{BL2,BP} relied on the erroneous assumption that $\partial_\mu X\subseteq\mathcal H^+(X,\mu)$.
However, this does not hold in general, as was pointed out to us by Anna Erschler.
In fact, in a paper in preparation, we obtain counterexamples for certain $LS$-measures in hyperbolic geometries.

\subsection{Main results for normal coverings}
\label{susnor}
The following is an extension of \cite[Theorem, p.\,307]{LS},
where the case of the Laplacian is considered.
Our proof is considerably shorter than the one in \cite{LS},
but we pay for this by using $LS$-discretization.

\begin{theo}\label{dichot}
Suppose that $\pi$ is normal and that $N$ is $L_0$-recurrent.
Let $h$ be a minimal positive $L$-harmonic function on $M$.
Then either $h$ is constant or there is a $\gamma\in\Gamma$ such that $\gamma^*h/h$ is unbounded.
\end{theo}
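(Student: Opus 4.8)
It suffices to show that if $\gamma^*h/h$ is bounded (above) for every $\gamma\in\Gamma$, then $h$ is constant; this is the nontrivial half of the dichotomy. Throughout I use that $\pi$ is normal, so that $\Gamma$ acts simply transitively on the fiber $X$, and that $L$ is $\Gamma$-invariant, being a pullback, so that $\gamma^*$ preserves $L$-harmonicity. Fix $\gamma\in\Gamma$. Since $L(\gamma^*h)=\gamma^*(Lh)=0$, the function $\gamma^*h$ is positive $L$-harmonic, and the assumed bound $\gamma^*h\le C_\gamma\,h$ exhibits $\gamma^*h/C_\gamma$ as a nonnegative $L$-harmonic function dominated by $h$. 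By minimality of $h$ there is a constant $c(\gamma)>0$ with
\[
  \gamma^*h=c(\gamma)\,h .
\]

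From $(\gamma_1\gamma_2)^*=\gamma_2^*\gamma_1^*$ and commutativity of the target $\R_{>0}$, the resulting map $c\colon\Gamma\to\R_{>0}$ is a homomorphism. I next read this relation off on $X$. Fixing a base point $x_0\in X$ and identifying $\Gamma\cong X$ via $\gamma\mapsto\gamma x_0$, the identity $h(\gamma x_0)=(\gamma^*h)(x_0)=c(\gamma)h(x_0)$ shows that $h|_X=h(x_0)\,c$; that is, up to a positive factor the restriction of $h$ to $X$ is the character $c$. With the appropriate $\Gamma$-equivariant $LS$-data, the measures satisfy $\mu_{\gamma y}(\gamma x)=\mu_y(x)$, so on $X\cong\Gamma$ the $\mu$-random walk is the convolution walk with step distribution $q(\delta):=\mu_{x_0}(\delta x_0)$; recall that $q$ has full support, since each $\mu_y$ is supported on all of $X$. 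A direct computation then gives $\mu_x(h)=\kappa\,h(x)$ for all $x\in X$, where $\kappa:=\sum_{\delta\in\Gamma}q(\delta)c(\delta)$. On the other hand, \cref{boha}.\ref{bohap} applied to $h\in\mathcal H^+(M,L)$ yields $\mu_x(h)\le h(x)$, hence $\kappa\le1$.

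The conclusion now comes from symmetry together with recurrence of the base. As in the cocompact case, the appropriate $LS$-data give a symmetric walk, $q(\delta)=q(\delta^{-1})$; since $c(\delta^{-1})=c(\delta)^{-1}$, reindexing gives $\kappa=\sum_\delta q(\delta)c(\delta)^{-1}$ as well, whence
\[
  2\kappa=\sum_{\delta\in\Gamma}q(\delta)\bigl(c(\delta)+c(\delta)^{-1}\bigr)\ge 2\sum_{\delta\in\Gamma}q(\delta)=2 .
\]
Thus $\kappa\ge1$, and combined with $\kappa\le1$ we obtain $\kappa=1$ with equality in the arithmetic--geometric mean inequality $t+t^{-1}\ge2$; as $q$ has full support this forces $c(\delta)=1$ for all $\delta\in\Gamma$. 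Therefore $\gamma^*h=h$ for every $\gamma$, so $h$ is $\Gamma$-invariant and descends to a positive $L_0$-harmonic function on $N$. Since $N$ is $L_0$-recurrent, this descended function is constant, and hence so is $h$.

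I expect the main obstacle to be the bookkeeping that turns the analytic minimality of $h$ on $M$ into the purely algebraic statement that $h|_X$ is a positive character of $\Gamma$ --- in particular, verifying that the appropriate $\Gamma$-equivariant $LS$-data produce a symmetric convolution walk on $X$, and that \cref{boha}.\ref{bohap} legitimately supplies the inequality $\kappa\le1$. Once these are in place, the symmetry estimate and the recurrence of $N$ finish the argument with no further analysis, which is exactly why the proof is shorter than the original at the cost of invoking $LS$-discretization.
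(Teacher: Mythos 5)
Your proposal follows essentially the same route as the paper's own proof (given there for the more general \cref{dichota}): minimality plus boundedness of every $\gamma^*h/h$ produces a homomorphism $c\colon\Gamma\to\R_{>0}$ with $\gamma^*h=c(\gamma)h$; restriction to the fiber identifies $h|_X$ with that character; equivariant balanced $LS$-data turn the $\mu$-walk on $X\cong\Gamma$ into a symmetric left-invariant walk; and the superharmonicity inequality $\mu_y(h)\le h(y)$ of \cref{lsh3} (your \cref{boha}.\ref{bohap}) combined with the $t+t^{-1}\ge 2$ equality analysis forces $c\equiv 1$. All of these computations are correct, and there is no circularity in citing \cref{boha}, which is proved before this theorem.

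There is, however, one concrete unsupported step, precisely the one you flagged: the symmetry $q(\delta)=q(\delta^{-1})$. In the paper this comes from \cref{Greengreen}, which yields $\mu_x(y)=\mu_y(x)$ for balanced $LS$-data \emph{only when $M$ is $L$-transient}: its proof passes through the identity $G=BCg$ between Green functions, which is meaningless in the recurrent case. The paper therefore begins with the reduction ``we may assume that $M$ is $L$-transient,'' the recurrent case being trivial since then every positive $L$-harmonic function on $M$ is constant by \cref{swel2}. Your proof needs this same one-sentence reduction (after which \cref{lsc}.\ref{lscb} supplies $\Gamma$-equivariant balanced data and \cref{Greengreen} supplies symmetry); as written, the symmetry assertion has no backing when $M$ is $L$-recurrent. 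A further, harmless difference is your ending: from $c\equiv1$ you descend $h$ to $N$ (using normality of $\pi$) and invoke $L_0$-recurrence of $N$ via \cref{swel2}, whereas the paper stays on $M$, observing that $h$ attains its minimum along $X$ and concluding by the maximum principle. Your version is perfectly valid for \cref{dichot}, but note that the paper's version is the one that survives in the setting of \cref{dichota}, where the action need not come from a covering.
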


Using their discretization of Brownian motion,
Lyons and Sullivan showed that, in the case of a normal Riemannian covering $\pi\colon M\to N$,
any bounded harmonic function on $M$ is constant if $N$ is recurrent and $\Gamma$ is $\omega$-hypercentral \cite[Theorem 2]{LS}
(where $\omega$ denotes the first infinite ordinal).
This was generalized by Lin and Zaidenberg to $FC$-hypercentral groups \cite[Corollary 2.6]{LZ} (see \cref{sushy}).
More generally,
they showed that bounded harmonic functions on $M$ are invariant under the $FC$-hypercenter of $\Gamma$
if $N$ is recurrent \cite[Corollary 2.5]{LZ}.

\begin{theo}\label{fchcin}
If $\pi$ is normal and $N$ is $L_0$-recurrent,
then any bounded $L$-harmonic function on $M$ is invariant under the $FC$-hypercenter of $\Gamma$.
In particular, if $\Gamma$ is $FC$-hypercentral,
then any bounded $L$-harmonic function on $M$ is constant.
\end{theo}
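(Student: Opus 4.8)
The plan is to transport the problem along the $LS$-discretization to the random walk on the fiber, where it becomes a statement about random walks on $\Gamma$. Since $\pi$ is normal, the fiber $X$ is a single $\Gamma$-orbit, and fixing a base point $x_0\in X$ identifies $X$ with $\Gamma$ via $\gamma\mapsto\gamma x_0$. Under this identification the deck action of $\Gamma$ becomes left translation and, because $L$ is the pull back of $L_0$ and the $LS$-data may be chosen $\Gamma$-equivariantly, the family $\mu$ becomes the right random walk driven by a single symmetric probability measure $p$ on $\Gamma$ whose support generates $\Gamma$; a function $h$ on $X$ is $\mu$-harmonic precisely when $h(\gamma)=\sum_{s}p(s)\,h(\gamma s)$ for all $\gamma$. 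By \cref{boha}.\ref{bohab} the restriction map $\mathcal{H}^\infty(M,L)\to\mathcal{H}^\infty(X,\mu)$ is a $\Gamma$-equivariant isomorphism, so it carries the deck action to left translation; in particular a bounded $L$-harmonic function on $M$ is invariant under a subgroup of $\Gamma$ if and only if its restriction to $X$ is left invariant under the same subgroup. It therefore suffices to prove that every bounded $p$-harmonic function on $\Gamma$ is left invariant under the $FC$-hypercenter of $\Gamma$.

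This last assertion is the group-theoretic core, and it is where the work lies. I would argue by transfinite induction along the upper $FC$-central series $1=F_0\le F_1\le\cdots$ of $\Gamma$, where $F_{\alpha+1}/F_\alpha$ is the $FC$-center of $\Gamma/F_\alpha$ and unions are taken at limit ordinals, the $FC$-hypercenter $F$ being the terminal term. At a successor stage one passes to the quotient by the normal subgroup $F_\alpha$ and pushes $p$ forward to a symmetric generating measure $\bar p$ on $\Gamma/F_\alpha$; by the inductive hypothesis every bounded $p$-harmonic function is $F_\alpha$-invariant and hence descends to a bounded $\bar p$-harmonic function, so one is reduced to showing that an element $z$ with finite conjugacy class acts trivially. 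Writing $\phi(\gamma):=h(z\gamma)-h(\gamma)$, which is again bounded and $p$-harmonic since left translation preserves harmonicity, one must show $\phi\equiv0$; telescoping gives $\sum_{j=0}^{n-1}\phi(z^j\gamma)=h(z^n\gamma)-h(\gamma)$, so the ergodic sums of $\phi$ along the $\langle z\rangle$-orbit stay bounded by $2\|h\|_\infty$. The finite conjugacy class of $z$ (equivalently, the finite index of its centralizer) together with the reversibility of the symmetric walk $p$ then forces $\phi\equiv0$. This step—equivalently, that the $FC$-hypercenter acts trivially on the Poisson boundary of a symmetric random walk—is the main obstacle; it is exactly the random-walk statement underlying \cite[Corollary 2.5]{LZ}, and I would either invoke it in that form or establish it in the random-walk section.

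Granting the group-theoretic statement, the theorem follows: the reduction of the first paragraph shows that every $h\in\mathcal{H}^\infty(M,L)$ is invariant under the $FC$-hypercenter $F$ of $\Gamma$. For the final assertion, suppose $\Gamma$ is $FC$-hypercentral, so $F=\Gamma$. Then any $h\in\mathcal{H}^\infty(M,L)$ is invariant under the full deck group and hence descends to a bounded function $\bar h$ on $N=M/\Gamma$; since $L$ is the pull back of $L_0$, the function $\bar h$ is bounded $L_0$-harmonic. As $N$ is $L_0$-recurrent, every positive, and hence every bounded, $L_0$-harmonic function on $N$ is constant, so $\bar h$, and therefore $h$, is constant.
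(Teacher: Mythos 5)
Your overall strategy is the paper's own: restrict to the fiber $X\cong\Gamma$, prove that bounded harmonic functions of the $\Gamma$-invariant irreducible $LS$-walk are invariant under the $FC$-hypercenter, and pull the invariance back through the isomorphism $\mathcal{H}^\infty(M,L)\cong\mathcal{H}^\infty(X,\mu)$ of \cref{boha}/\cref{harmp}; your descent-to-$N$ ending via recurrence (\cref{swel2}) is a harmless variant of the paper's maximum-principle ending. So everything beyond the discretization machinery sits in your second paragraph, and that is exactly where there is a genuine gap: the assertion that ``the finite conjugacy class of $z$ together with the reversibility of the symmetric walk then forces $\phi\equiv0$'' is the entire content of the theorem at the random-walk level, and you neither prove it nor have anything to cite. \cite[Corollary 2.5]{LZ} is the statement for Riemannian covers, proved there by Stone--\v{C}ech methods; its random-walk analogue is precisely what this paper must (and does) establish, as \cref{fchc2}.

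The sketch you offer cannot be completed as stated. Its one quantitative ingredient --- boundedness of the telescoped sums $\sum_{j<n}\phi(z^j\gamma)=h(z^n\gamma)-h(\gamma)$ --- is vacuous: the bound $2\|h\|_\infty$ holds for every bounded harmonic $h$ and every $z$ whatsoever, for instance for a free generator $z$ of a free group under simple random walk, where the conclusion fails; so nothing in the sketch isolates where finiteness of the conjugacy class enters. Two ways to close the gap. (i) The paper's route: for $z$ whose class modulo the subgroup $N$ of elements already acting trivially is finite, pass to the centralizer $\Gamma'$, of finite index --- note that $X$ is then no longer a single $\Gamma'$-orbit, which is why \cref{fchc2} and \cref{fchc4} are formulated for invariant walks on sets with finite quotient rather than for walks on groups --- observe that $[z,\Gamma']\subseteq N$ gives $h(z\sigma y)=h(\sigma z y)$ for all $\sigma\in\Gamma'$, and apply \cref{fchc4} (Li's Theorem 3.9). (ii) A Choquet--Deny-type compactness argument completing your telescoping: choose $\gamma_n$ with $\phi(\gamma_n)\to s:=\sup\phi$; by finiteness of the conjugacy class assume $\gamma_n^{-1}z\gamma_n\equiv z'$ after passing to a subsequence; let $h_\infty$ be a pointwise limit of the translates $h(\gamma_n\,\cdot\,)$, so that $h_\infty(z'\,\cdot\,)-h_\infty$ is harmonic, bounded above by $s$, and equal to $s$ at $e$, hence identically $s$ by irreducibility and the maximum principle; telescoping along powers of $z'$ then forces $s\le0$, and $\inf\phi\ge0$ symmetrically. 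Note that neither route uses reversibility: symmetry of the walk is a red herring here (\cref{fchc2} assumes none), and if your argument genuinely needed it you would owe its verification as well, since symmetry of the $LS$-walk requires balanced data and transience (\cref{Greengreen}), not merely equivariance.
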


Whereas Li and Zaidenberg use the Stone-\v{C}ech compactification of $\Gamma$ in their proof,
we rely on $LS$-discretization.
Nevertheless, our argumentation here draws heavily from \cite{LZ} and the earlier \cite{Li}.

The following result corresponds to \cite[Theorem 3']{LS},
except that the assumption on recurrence is not needed there
(and used here for convenience only).

\begin{theo}\label{noram}
If $\pi$ is normal and $N$ is $L_0$-recurrent,
then there is a $\Gamma$-invariant bounded projection $L^\infty(M)\to\mathcal{H}^\infty(M,L)$.
In particular, if all bounded $L$-harmonic functions on $M$ are constant,
then $\Gamma$ is amenable.
\end{theo}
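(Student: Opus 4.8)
The plan is to produce $P$ directly on $M$ by time-averaging the diffusion semigroup and extracting a weak-$*$ limit, thereby avoiding any restriction of $L^\infty$-classes to the null set $X$. Write $e^{tL}$ for the diffusion semigroup, $(e^{tL}f)(y)=\mathbb E_y[f(D_t)]$, regarded on $L^\infty(M)=(L^1(M))^*$ as the adjoint of the $L^1$-semigroup; then each $e^{tL}$ is positive, weak-$*$ continuous, of operator norm $\le1$, and commutes with the $\Gamma$-action because $L$ is $\Gamma$-invariant (and the volume element is $\Gamma$-invariant, so each $\gamma^*$ is a weak-$*$ continuous isometry). For $f\in L^\infty(M)$ I would set $u_T=\frac1T\int_0^T e^{tL}f\,dt\in L^\infty(M)$, a net bounded by $\|f\|_\infty$, and define $Pf$ to be a weak-$*$ cluster point of $(u_T)$ as $T\to\infty$, which exists by Banach--Alaoglu. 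This gives a linear $P\colon L^\infty(M)\to L^\infty(M)$ with $\|P\|\le1$.

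Next I would verify the three defining properties. For harmonicity, the identity $e^{sL}u_T-u_T=\frac1T\bigl(\int_T^{T+s}-\int_0^s\bigr)e^{tL}f\,dt$ gives $\|e^{sL}u_T-u_T\|_\infty\le\frac{2s}{T}\|f\|_\infty\to0$, so weak-$*$ continuity of $e^{sL}$ yields $e^{sL}Pf=Pf$ for all $s\ge0$. A bounded function fixed by the whole semigroup agrees, by hypoellipticity of the heat operator for the elliptic $L$, with the smooth function $e^{sL}Pf$ ($s>0$), which is therefore $L$-harmonic; hence $Pf\in\mathcal H^\infty(M,L)$. If $f$ is already bounded $L$-harmonic, then $e^{tL}f=f$, so $u_T=f$ and $Pf=f$; thus $P^2=P$ and $P$ is a norm-one projection onto $\mathcal H^\infty(M,L)$. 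Finally $P(\gamma^*f)=\gamma^*Pf$ follows at once from $\gamma^*e^{tL}=e^{tL}\gamma^*$ together with weak-$*$ continuity of $\gamma^*$. This establishes the first assertion, and notably needs neither $LS$-discretization nor recurrence.

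For the \lq\lq in particular\rq\rq\ I would argue as follows. Since $N$ is $L_0$-recurrent it is $L_0$-complete, and completeness lifts through the covering: the $M$-diffusion and its projection to $N$ have the same lifetime by unique path lifting, so $M$ is $L$-complete and $e^{tL}\mathbf 1=\mathbf 1$. Hence $u_T(\mathbf 1)=\mathbf 1$ and $P\mathbf 1=\mathbf 1$, while positivity of $e^{tL}$ and of weak-$*$ limits gives $Pf\ge0$ for $f\ge0$. If all bounded $L$-harmonic functions are constant, then $f\mapsto Pf$ takes constant values and defines a $\Gamma$-invariant mean $\mathfrak m$ on $L^\infty(M)$. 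Choosing a measurable fundamental domain $F$ for the free $\Gamma$-action on $M$, the map $a\mapsto\sum_{\gamma\in\Gamma}a(\gamma)\mathbf 1_{\gamma F}$ is a $\Gamma$-equivariant, unital, positive embedding $\ell^\infty(\Gamma)\hookrightarrow L^\infty(M)$, so its composition with $\mathfrak m$ is a left-invariant mean on $\ell^\infty(\Gamma)$, whence $\Gamma$ is amenable.

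The steps needing the most care are the passage from the semigroup fixed-point identity $e^{sL}Pf=Pf$ to genuine $L$-harmonicity (the regularity/smoothing input for the elliptic operator) and the clean realization of $P$ as a weak-$*$ limit, which is exactly what lets me work with $L^\infty$-classes throughout. I expect the main obstacle to be the verification that stochastic completeness lifts to the covering, since this is what secures $P\mathbf 1=\mathbf 1$; it is the single place where recurrence (via completeness) of $N$ enters, matching the remark that the hypothesis is a convenience rather than a necessity here.
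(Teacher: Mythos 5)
Your argument is sound, but it follows a genuinely different route from the paper's. The paper deduces the theorem from its discretization machinery: it restricts $f\in L^\infty(M)$ to the fiber $X$, applies the projection $A\colon L^\infty(X)\to\mathcal H^\infty(X,\mu)$ of \cref{maram} (built from an invariant mean on the abelian semigroup $\N_0$ applied to the random-walk iterates $k\mapsto\mu^k_x(f|_X)$), and extends $A(f|_X)$ back to $M$ through the isomorphism $\mathcal H^\infty(X,\mu)\cong\mathcal H^\infty(M,L)$ of \cref{harmp}; amenability is then inherited from the discrete statement. You instead perform the averaging in continuous time directly on $M$ --- Ces\`aro means of the heat semigroup plus weak-$*$ compactness --- which is close in spirit to the original argument for \cite[Theorem 3']{LS} that the authors explicitly chose to avoid (``we remain in our setup and invoke $LS$-discretization''). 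Your route buys three things: it never restricts an $L^\infty$-class to the null set $X$ (a point the paper's one-line proof glosses over, since $f|_X$ is only meaningful if $L^\infty(M)$ is read as bounded measurable functions rather than a.e.-classes); it is self-contained; and it makes transparent that recurrence of $N$ enters only through stochastic completeness of $M$ (via \cref{liftpc}), matching the paper's remark that recurrence is assumed for convenience only. The paper's route, in exchange, recycles results already established earlier and avoids all semigroup technicalities.

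Two steps in your write-up need tightening; both have standard fixes. First, taking ``a weak-$*$ cluster point of $(u_T)$'' separately for each $f$ does not produce a linear, let alone $\Gamma$-equivariant, operator: different $f$ may force different subnets. Fix one ultrafilter on $(0,\infty)$ refining the filter of neighbourhoods of infinity and let $Pf$ be the weak-$*$ limit of $u_T$ along it; since $e^{sL}$ and $\gamma^*$ are weak-$*$ continuous and ultrafilter limits are linear and commute with weak-$*$ continuous maps, linearity, $\|P\|\le1$, positivity, $P\mathbf 1=\mathbf 1$, $e^{sL}Pf=Pf$, and $\gamma^*Pf=P(\gamma^*f)$ all survive the limit. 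Second, the identity $e^{tL}f=f$ for $f\in\mathcal H^\infty(M,L)$ --- which is exactly what makes $P$ a projection \emph{onto}, rather than merely into, $\mathcal H^\infty(M,L)$ --- is not automatic; it fails on stochastically incomplete manifolds (already for $f\equiv1$). Here it does hold, but it deserves proof: localize the martingale property \eqref{fx} with cut-offs as in the proof of \cref{swel}, stop at the exit times of an exhaustion, and use boundedness of $f$ together with dominated convergence; the exit times tend to the lifetime, and the fact that the lifetime is infinite is where $L_0$-recurrence of $N$, transported to $M$ by \cref{liftpc}, is used. With these two repairs your proof is complete.
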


It seems that the proof in \cite{LS} extends to the situation considered here.
However, we remain in our setup and invoke $LS$-discretization.

\subsection{The motivating example (after Sullivan \cite{Su})}
\label{susexa}
Let $M$ be a connected Riemannian manifold and set
\begin{equation}\label{defbot}
  \lambda_0 = \lambda_0(M) = \inf R(f),
\end{equation}
where $f$ runs through all non-vanishing smooth functions on $M$ with compact support
and $R(f)$ denotes the Rayleigh quotient of $f$.
Recall that $\lambda_0$ is equal to the bottom of the spectrum of the \emph{Friedrichs extension}
of the (geometer's) Laplacian $\Delta$ of $M$,
considered as an unbounded symmetric operator on the space $L^2(M)$ of square-integrable functions on $M$ 
with domain $C^\infty_c(M)$, the space of smooth functions on $M$ with compact support.

It is well known that $\lambda_0$ is the supremum over all $\lambda\in\R$
such that there is a positive $\lambda$-harmonic function $f\colon M\to\R$
(see, e.g., \cite[Theorem 7]{CY}, \cite[Theorem 1]{FS}, or \cite[Theorem 2.1]{Su}).
It is crucial that these $\lambda$-harmonic functions are not required to be square-integrable.
In fact, by the above, $\lambda_0$ is exactly the border between positive and $L^2$-spectrum of $M$.

\begin{rem}\label{rembot}
If the right action of $\pi_1(N,q)$ on $X=\pi^{-1}(q)$ is amenable, then $\lambda_0(M)=\lambda_0(N)$; see \cite{BMP1}.
If the right action of $\pi_1(N,q)$ on $X$ is not amenable and $\lambda_0(N)$ does not belong to the essential spectrum of $N$,
then $\lambda_0(M)>\lambda_0(N)$; see \cite{Po}.
\end{rem}

Let $p=p(t,x,y)$ be the kernel of the minimal heat semigroup on $M$.
Recall that $p(t,x,y)>0$ for all $t>0$ and $x,y\in M$.
Say that $\lambda\in\R$ belongs to the \emph{Green's region of $M$} if
\begin{equation}\label{green}
  \int_0^\infty e^{\lambda t} p(t,x,y) \,dt < \infty
\end{equation}
for some--and then any--pair of points $x\ne y$ in $M$.
By \cite[Theorem 2.6]{Su},
the Green's region of $M$ is either $(-\infty,\lambda_0)$ or   $(-\infty,\lambda_0]$.
In the first case, $M$ is said to be \emph{$\lambda_0$-recurrent}, in the second \emph{$\lambda_0$-transient}.
By \cite[Theorem 2.7]{Su}, if $M$ is $\lambda_0$-recurrent,
then positive $\lambda_0$-harmonic functions are constant multiples of one another.
By \cite[Theorem 2.8]{Su}, if $M$ has a square integrable $\lambda_0$-harmonic function,
then the space of square integrable $\lambda_0$-harmonic functions on $M$
is generated by a square integrable positive $\lambda_0$-harmonic function
and $M$ is $\lambda_0$-recurrent.
In the $\lambda_0$-recurrent case, the associated $\lambda_0$-random motion
with transition densities $e^{\lambda_0t}p(t,x,y)\vf(y)/\vf(x)$
with respect to the Riemannian volume element of $M$,
where $\vf$ is a positive $\lambda_0$-harmonic function on $M$,
does not depend on the choice of $\vf$ (by what was said above) and is recurrent,
by \cite[Theorem 2.10]{Su}).

In the case $\lambda_0=0$, we also speak of harmonic functions and of recurrent or transient manifolds.
In this case, the associated random walk is standard Brownian motion on $M$.

\begin{exa}\label{exafivo}
If $M$ is compact or, more generally, complete and of finite volume, then $\lambda_0=0$,
and constant functions are $\lambda_0$-harmonic and square-integrable.
\end{exa}

\begin{exa}\label{exahype}
(Sullivan \cite[Theorem 2.21]{Su}.)
Let $M=\Gamma\backslash H^m$ be a complete hyperbolic manifold
and suppose that $M$ is geometrically finite, that is,
the action of $\Gamma$ on $H^m$ admits a fundamental domain
with finitely many and totally geodesic sides. 
Then
\begin{align*}
	\lambda_0 =
	\begin{cases}
	(m-1)^2/4 &\text{if $d\le(m-1)/2$,} \\
	d(m-1-d) &\text{if $d\ge(m-1)/2$,}
	\end{cases}
\end{align*}
where $d=d(\Gamma)$ denotes the Hausdorff dimension of the limit set of $\Gamma$.
If $d\ge(m-1)/2$, then $M$ is $\lambda_0$-recurrent.
If $d>(m-1)/2$, then $M$ has square-integrable positive $\lambda_0$-harmonic functions.
(See also \cite[Theorem 2.17 and Corollary 2.18]{Su}.)
\end{exa}

Let $\pi\colon M\to N$ be a Riemannian covering of connected Riemannian manifolds
with group $\Gamma$ of covering transformations
and $X$ be the fiber of $\pi$ over some chosen point in $N$.
Let $\lambda_0=\lambda_0(N)$, and assume that $N$ is $\lambda_0$-recurrent.
Let $\vf$ be the lift of a positive $\lambda_0$-harmonic function from $N$ to $M$
and $\dv$ be the Riemannian volume element of $M$.
Then multiplication $m_\vf$ by $\vf$ gives a unitary transformation $L^2(M,\vf^2\dv)\to L^2(M,\dv)$
which intertwines the diffusion operator $L=Lf=-\Delta f+2\la\nabla\ln\vf,\nabla f\ra$
with the operator $-\Delta+\lambda_0$;
see the computation in the beginning of \cite[Section 8]{Su} (compare also with \cref{susdop} below).
The associated diffusion process has transition densities $e^{\lambda_0t}p(t,x,y)/\vf(x)\vf(y)$
with respect to the volume element $\vf^2\dv$.
Therefore, since $N$ is $\lambda_0$-recurrent, $X$ is $*$-recurrent in the sense of Lyons and Sullivan.
Hence the above results apply if $N$ is compact (then $\lambda_0=0$) or $\lambda_0$-recurrent, respectively.

\subsection{Discussion}
\label{susdis}
An important consequence of our results is that, under specific conditions,
problems about $L$-harmonic functions on $M$,
minimal Martin boundaries $\partial_L^{\min}M$, and Poisson boundaries $\mathcal{P}(M,L)$
are equivalent to problems about $\mu$-harmonic functions on $X$,
minimal Martin boundaries $\partial_\mu^{\min}X$, and Poisson boundaries $\mathcal{P}(X,\mu)$.
The most basic problems in this direction are
the non-existence of non-constant bounded or positive $L$-harmonic functions,
and correspondingly for random walks.
These properties are referred to as the \emph{Liouville} and \emph{strong Liouville property}, respectively.

Let $\pi\colon M\to N$ be a normal Riemannian covering  of connected manifolds
with group $\Gamma$ of covering transformations.
Lyons and Sullivan showed that $\Gamma$ is amenable if $M$ has the Liouville property,
that is, if $\mathcal{P}(M,\Delta)$ is trivial.
The converse does not hold.
In fact, Lyons and Sullivan gave examples of Riemannian covers $M$ of closed surfaces $N$,
where $\Gamma$ is two-step solvable and $\mathcal{P}(M,\Delta)$ is nontrivial \cite[pp. 299-300]{LS}.
In the same direction, Erschler showed that $\mathcal{P}(M,\Delta)$ is nontrivial
if $N$ is closed and $\Gamma$ is a Baumslag group $B_d$ with $d\ge3$ \cite[Theorem 5.2]{Er},
a two-step solvable and finitely presentable group.
In her proof, Erschler uses the isomorphism $\mathcal{P}(M,\Delta)\cong\mathcal{P}(X,\mu)$,
where $X$ is a fibre of $\pi$ and $\mu$ an associated family of Lyons-Sullivan measures on $X$.
Note also that, since $B_d$ is finitely presentable,
$N$ can be chosen to have fundamental group $\Gamma=B_d$ and $M$ to be the universal covering space of $N$.

At this point, it is not clear whether there are any non-trivial conditions on $\Gamma$
which are equivalent to the Liouville property of $M$ if $N$ is closed or recurrent.
As for sufficient conditions,
Lyons and Sullivan proved that $\mathcal{P}(M,\Delta)$ is trivial
in the case where $N$ is recurrent and $\Gamma$ is $\omega$-hypercentral \cite[Theorem 2]{LS}. 
This was extended by Lin and Zaidenberg to the case where $\Gamma$ is $FC$-hypercentral \cite[Corollary 2.6]{LZ}.
Kaimanovich showed by analytic methods that $\mathcal{P}(M,\Delta)$ is trivial if $N$ is closed
and $\Gamma$ is polycyclic or of subexponential growth \cite[Theorem 8 and Corollary 1]{Ka1}.
All the above results can also be shown for elliptic diffusion operators
by using the isomorphisms $\mathcal{H}^\infty(M,L)\cong\mathcal{H}^\infty(X,\mu)$
and between $\mathcal{P}(M,\Delta)\cong\mathcal{P}(X,\mu)$, respectively.
In the present article,
(the second part of) \cref{fchcin} is an example where this strategy applies.
Another one is the consequence of \cref{noram} that $M$ admits
positive $\lambda_0(N)$-harmonic functions which are not $\Gamma$-invariant,
provided that $N$ is $\lambda_0$-recurrent and $\Gamma$ is not amenable.
In the context of the Liouville property,
the entropy criterion of Kaimanovich and Vershik for random walks on groups is also of interest \cite[Theorem 1.1]{KV}.

The strong Liouville property, that is, the triviality of $\partial_\Delta^{\min}M$,
was obtained by a direct argument by Lyons and Sullivan
in the case where $N$ is closed and $\Gamma$ is nilpotent  \cite[Theorem 1]{LS}.
As we mentioned further up,
the extension of this result to elliptic diffusion operators is also an immediate consequence
of the isomorphism $\mathcal{H}^+(M,L)\cong\mathcal{H}^+(X,\mu)$ and the corresponding result for nilpotent groups (see \cite{CD,Ma} and also \cref{nil2}).
Lyons and Sullivan point out the example of a rank two Abelian covering space of the two-sphere
with four points removed which admits non-constant positive harmonic functions,
thus showing that the assumption that $N$ is closed is essential.
This example was generalized and systematized by Epstein to Abelian covers of surfaces of finite type \cite[Theorem 3]{Ep}.

For the case where $N$ is closed,
Lyons and Sullivan posed the problem whether $\Gamma$ has subexponential growth
if and only if $M$ has the strong Liouville property \cite[p.\,305]{LS}.
Bougerol and Elie obtained the converse direction of this under the assumption that $\Gamma$ is linear \cite[Theorem 1.6]{BE} and also \cite[Theorem 1.1]{BBE}.
More recently, the second author of this article proved the converse direction in full generality \cite{Po2} (see also the erratum to appear, or the corrected arXiv version).

By the well known theorem of Anderson and Schoen, $\partial_\Delta M$ is naturally isomorphic to the sphere at infinity
and $\partial_\Delta^{\min}M=\partial_\Delta M$ 
if $M$ is a complete and simply connected Riemannian manifold with pinched negative sectional curvature
\cite{AS,An,Ki}.
In the case where $M$ is the universal covering space of a closed rank one manifold $N$ of non-positive curvature,
the Poisson boundary $\mathcal{P}(M,\Delta)$ is naturally isomorphic to the sphere at infinity of $M$,
using again the isomorphism between $\mathcal{P}(M,\Delta)$ and $\mathcal{P}(X,\mu)$, where $X$ is a fibre of $\pi\colon M\to N$ and $\mu$ an associated family of Lyons-Sullivan measures on $X$ \cite{BL1}.
However, the relation between the Martin boundary $\partial_\Delta M$ and the sphere at infinity is unclear.
One hope in this direction, nurtured by the isomorphism $\partial_\Delta^{\min}M=\partial_\mu^{\min}X$,
is that the latter might be more accessible than $\partial_\Delta^{\min}M$.

\subsection{Structure of the article}
\label{sustru}
Besides other preliminaries, \cref{secpre} contains short descriptions of Martin and Poisson boundaries.
In \cref{seclsd}, we describe the $LS$-discretization procedure and its properties.
Much of this is a translation of known results about the Laplacian and Brownian motion.
However, \cref{martin} and \cref{harmc} about Martin boundaries and positive harmonic functions
are also new for the Laplacian and Brownian motion.
In \cref{secdico},
we apply the results of \cref{seclsd} to covering projections and prove Theorems \ref{poha} -- \ref{trec}.
The general \cref{liftp} and \cref{liftpc} about diffusions on covering spaces might be useful in other contexts.
In \cref{secdiac}, we get analogs of Theorems \ref{poha} -- \ref{trec}
and extensions of Theorem \ref{dichot} -- \ref{noram} for properly discontinuous group actions.
In \cref{secrwg}, we obtain analogs of Theorems \ref{poha} -- \ref{noram} for random walks,
that is, Markov chains on countable sets. 

\section{Preliminaries}
\label{secpre}

\subsection{Diffusion operators and processes}
\label{susdop}
Let $M$ be a connected manifold and $L$ be an elliptic diffusion operator on $M$,
in coordinates of $M$ written as
\begin{align}\label{dol}
	Lf = \frac12 \sum_{ij} a^{ij}\frac{\partial^2f}{\partial x^i\partial x^j} + \sum_k b^k\frac{\partial f}{\partial x^k},
\end{align}
where the matrix $(a^{ij})$ is symmetric and positive definite.
A function $h$ on $M$ is said to be \emph{$L$-harmonic} if $Lh=0$.

The inverse of the coefficient matrix $(a^{ij})$
is the fundamental matrix of a well defined Riemannian metric on $M$,
the \emph{Riemannian metric associated to $L$}.
Its Laplacian $\Delta$ has twice the principal symbol of $-L$,
and hence $L+\frac12\Delta$ is of first order.
Since $L$ and $\Delta$ vanish on constant functions, $L+\frac12\Delta$ is a vector field.

We assume that $L$ is symmetric on $C^\infty_c(M)$ with respect to a smooth volume element on $M$,
that is, a measure on $M$ of the form $\vf^2\dv$, where $\vf$ is a positive smooth function on $M$ and $\dv$ the volume element of the Riemannian metric on $M$ associated to $L$.
The symmetry of $L$ then implies that
\begin{align}\label{doldel}
	Lf = -\frac12\Delta f + \la\nabla\ln\vf,\nabla f\ra.
\end{align}
Conversely, any operator of this from is an elliptic diffusion operator on $M$
which is symmetric on $C^\infty_c(M)$ with respect to the volume element $\vf^2\dv$.

Because of \eqref{doldel}, the diffusion process $(D_t)$ associated to $L$ is Brownian motion with drift $\nabla\ln\vf$.
We assume throughout that $(D_t)$ has infinite life time since this is a consequence of the assumptions in our applications, namely the existence of recurrent subsets.
We will encounter different models of $(D_t)$,
but usually we view it as defined on the space $\Omega$ of continuous paths $\omega\colon\R_{\ge0}\to M$
with $D_t(\omega)=\omega(t)$
and associated family of probability measures $(P_x)_{x\in M}$ on $\Omega$.
We have $P_x[\omega(0)=x]=1$.
Moreover, the distribution of $P_x$ at time $t>0$ has a smooth density $p_{t,x}=p_t(x,y)$ with respect to $\vf^2\dv$
which is symmetric in $x,y$.
For a measure $\mu$ on $M$, we set
\begin{equation}\label{pm}
	P_\mu = \int_{M} \mu(dx)P_x.
\end{equation}
Recall that $(D_t)$ is a strong Markov process and that, for any $f\in C^\infty_c(M)$,
\begin{align}\label{fx}
	f\circ D_t - f\circ D_0 - \int_0^t((Lf)\circ D_s)ds
\end{align}
is a martingale.

\subsection{Martin boundary}
\label{submarb}
The Martin boundary is defined when the diffusion process is transient,
that is, when its Green function $G(x,y)<\infty$ for all $x\ne y$ in $M$.
Recall that $G(.,y)$ is an $L$-superharmonic function which is $L$-harmonic on $M\setminus\{y\}$.
Fix $x_0\in M$ and define the \emph{Martin kernel} functions $K(.,y)$ by
\begin{align}\label{marker}
  K(x,y) = \frac{G(x,y)}{G(x_0,y)}.
\end{align}
A sequence $(y_n)$ in $M$ is said to \emph{converge to a Martin boundary point} $\eta$ of $M$
if $d(x_0,y_n)\to\infty$ and if $K(.,y_n)$ converges (pointwise) on $M$.
In this case, the limit function is $L$-harmonic, since the convergence is actually locally smooth.
The limit function is then denoted by $K(.,\eta)$ and is identified with $\eta$,
and the space of limit functions is denoted by $\partial_LM$.
The Martin compactification $\mathcal{M}(M,L)=M\cup\partial_LM$,
where points $y\in M$ are identified with the Martin kernel $K(.,y)$.
Then $\mathcal{M}(M,L)$ together with pointwise convergence is a compact Hausdorff space with $M$ homeomorphically embedded.

Note that $K(.,\eta)$ is positive with $K(x_0,\eta)=1$.
Any minimal positive harmonic function is a Martin boundary point.
Moreover, for any positive $L$-harmonic function $h$ on $M$ with $h(x_0)=1$,
there is a unique probability measure $\nu_h$ on the minimal part $\partial_L^{\min}M$ of the Martin boundary $\partial_LM$,
the part of $\partial_LM$ consisting of minimal positive $L$-harmonic functions,
such that
\begin{align}\label{poisson2}
	h(x) = \int_{\partial_L^{\min}M }K(x,\eta)\nu_h(d\eta).
\end{align}
We will also need Martin boundaries $\partial_\mu X$ of random walks on countable sets $X$
with family $\mu=(\mu_x)_{x\in X}$ of transition probabilities.
The definition and discussion of these is analogous to the above (see \cite[Chapter 7]{Wo} for instance), except that, in general,
the limit functions of Martin kernels are $\mu$-superharmonic and not necessarily $\mu$-harmonic.
In other words, the Martin boundary $\partial_\mu X$ may contain $\mu$-superharmonic functions which are not $\mu$-harmonic.
However, minimal positive $\mu$-harmonic functions belong to $\partial_\mu X$
and constitute the minimal Martin boundary $\partial_\mu^{\min} X$.
Furthermore,  similarly to (\ref{poisson2}),
there is a representation formula for positive $\mu$-harmonic functions in terms of minimal positive $\mu$-harmonic functions.

\subsection{Poisson boundary}
\label{subpoi}
Consider a random walk on a countable set $X$
with family $\mu=(\mu_x)_{x\in X}$ of transition probabilities and path space $\Omega=X^{\N_0}$.
Assume that the random walk is \emph{irreducible}, that is,
for all $x,y$ in $X$, there exist $x_1,\dots,x_k$ in $X$ such that
\begin{align}\label{mui}
  \mu_x(x_1)\mu_{x_1}(x_2)\dots\mu_{x_{k-1}}(x_k)\mu_{x_k}(y) > 0.
\end{align}
For $k\ge2$,
define a family $\mu^k=(\mu^k_x)_{x\in X}$ of probability measures on $X$ recursively by
\begin{align}\label{muk}
  \mu^k_x(y) = \sum_{z\in X}\mu_x(z)\mu^{k-1}_z(y),
\end{align}
where $\mu^1=\mu$ (and $\mu^0=\delta=(\delta_x)_{x\in X}$, the family of Dirac measures on $X$).
Irreducibility is then equivalent to the property that, for all $x,y\in X$,
\begin{align}\label{mui2}
	\mu^k_x(y)>0
\end{align}
for some $k\ge1$.

Endow $X$ with the discrete topology
and let $\mathcal{B}_\Omega$ be the $\sigma$-algebra of Borel sets of $\Omega$.
For $x\in X$,
let $P_x$ be the probability measure on $(\Omega,\mathcal{B}_\Omega)$ with
\begin{align*}
	&P_x[\omega(0)=x_0,\omega(k_1)\in A_1,\dots,\omega(k_n)\in A_n] \\
	&\hspace{10mm}
	= \delta_x(x_0)\sum_{x_1\in A_1}\mu^{k_1}_{x_0}(x_1)\sum_{x_2\in A_2}\mu^{k_2-k_1}_{x_1}(x_2)
		\cdots\sum_{x_n\in A_n}\mu^{k_n-k_{n-1}}_{x_{n-1}}(x_n)
\end{align*}
for all integers $0<k_1<k_2<\dots<k_n$ and subsets $A_1,\dots,A_n$ of $X$.
Let $\mathcal{F}$ be the extension of $\mathcal{B}_\Omega$ by subsets $N$ of $\Omega$
which are null sets with respect to all $P_x$.
Then the measures $P_x$ extend naturally to $(\Omega,\mathcal{F})$.

We say that $\omega_1,\omega_2\in\Omega$ are \emph{stationary equivalent} if there are $k_1,k_2\in\N$
such that $\omega_1(k_1+k)=\omega_2(k_2+k)$ for all $k\in\N$.
We say that a measurable subset $A$ of $\Omega$ is \emph{stationary mod zero}
if there are sets $N^-\subseteq A$ and $N^+\subseteq\Omega\setminus A$,
which are null sets with respect to all $P_x$,
such that $(A\setminus N^-)\cup N^+$ is the union of stationary equivalence classes.
The set $\mathcal{S}$ of measurable subsets of $\Omega$ which are stationary mod zero
is a $\sigma$-subalgebra of $\mathcal{F}$.
By the irreducibility of the random walk, all the measures $P_x$ are equivalent on $\mathcal{S}$.
We call $\Omega$ together with $\mathcal{S}$ and the restrictions of the measures $P_x$ to $\mathcal{S}$
a \emph{Poisson boundary} of the Markov chain.
More generally, if $\pi \colon (\Omega,\mathcal{S}) \to (\mathcal{P},\mathcal{T})$ is a surjection
and $\mathcal{P}$ is equipped with the measures $\nu_x=\pi_*P_x$,
then we call $(\mathcal{P},\mathcal{T},(\nu_x)_{x\in X})$ a \emph{Poisson boundary} of the Markov chain
if $\pi^*\colon L^\infty(\mathcal{P},\mathcal{T})\to L^\infty(\Omega,\mathcal{S})$ is an isomorphism.
(In other words, given the other conditions,
$\pi^*\colon \mathcal{T}\to\mathcal{S}$ is an isomorphism modulo sets of measure zero.)

A bounded function $f\colon X\to\R$ is called \emph{$\mu$-harmonic} if
\begin{align}\label{pharm}
	f(x) = \sum_{y\in X} \mu_x(y)f(y)
\end{align}
for all $x\in X$.
For any such function $f$, the family $(f\circ X_k)_{k\in\N_0}$ is a bounded martingale,
where $X_k(\omega)=\omega_k$.
Therefore, for each $x\in X$,
\begin{align*}
	f_\infty(\omega) = \lim_{k\to\infty}f(X_k(\omega))
\end{align*}
exists for $P_x$-almost every $\omega\in\Omega$.
If the limit exists for $\omega$, then also for any $\omega'$ which is stationary equivalent to $\omega$,
and then $f_\infty(\omega)=f_\infty(\omega')$.
Hence the limit function $f_\infty$ is $\mathcal{S}$-measurable.
Since all $P_x$ are equivalent on $\mathcal{S}$,
$f_\infty$ is uniquely defined as an element of $L^\infty(\Omega,\mathcal{S})$.
Moreover, since $(f\circ X_k)_{k\in\N_0}$ is a bounded martingale,
\begin{align*}
	f(x) =  \int_\Omega f_\infty(\omega) P_x(d\omega).
\end{align*}
More generally, for any Poisson boundary as above, we get
\begin{align}\label{poisson1}
	f(x) = \int_\mathcal{P} \vf(\eta) \nu_x(d\eta)
\end{align}
where $\vf=f_\infty\circ\pi$.
Conversely, any $f$ defined in this way is a bounded $\mu$-harmonic function on $X$.
The above formula is similar to the Poisson formula \eqref{poisson2},
except that here the measures are given, whereas there the kernel functions are given.

\subsection{$FC$-hypercenters and $FC$-hypercentral groups}
\label{sushy}
For any group $\Gamma$,
let $FC(\Gamma)$ be the union of all elements of $\Gamma$ whose conjugacy classes are finite,
the \emph{$FC$-center of $\Gamma$}, a normal subgroup of $\Gamma$ containing the center of $\Gamma$.
The \emph{upper $FC$-central series} of normal subgroups $\Gamma_\alpha$ of $\Gamma$ is defined by
\begin{alignat}{3}\label{fchc}
  \Gamma_0 & =\{e\}, \notag \\
  \Gamma_\alpha/\Gamma_\beta & = FC(\Gamma/\Gamma_\beta) & &\quad\text{if $\alpha=\beta+1$,} \\
  \Gamma_\alpha &= \cup_{\beta<\alpha}\Gamma_\beta & &\quad\text{if $\alpha$ is a limit ordinal}, \notag
\end{alignat}
where $\alpha$ runs over all ordinals.
The series of $\Gamma_\alpha$ stabilizes eventually,
that is, there is a first ordinal $\alpha$ with $\Gamma_\alpha=\Gamma_\beta=:\Gamma_{\lim}$ for all $\beta\ge\alpha$,
and $\Gamma_{\lim}$ is called the \emph{$FC$-hypercenter of $\Gamma$}.
We say that $\Gamma$ is \emph{$FC$-$\alpha$-hypercentral} if $\Gamma=\Gamma_{\alpha}$.
Replacing the $FC$ centers by centers,
we get the corresponding and more common notions of \emph{upper central series},
\emph{hypercenter}, and \emph{$\alpha$-hypercentral}.
Clearly, $\alpha$-hypercentral groups are $FC$-$\alpha$-hypercentral.
If the specific $\alpha$ is irrelevant,
we also speak of \emph{$FC$-hypercentral} or \emph{hypercentral} groups, respectively.
Synonyms in the literature are \emph{$FC$-hypernilpotent} and \emph{hypernilpotent}. 

Echterhoff showed that $\Gamma$ is $FC$-hypercentral if and only if
it is amenable and every prime ideal of its group $C^*$-algebra is maximal \cite{Ec},
thus providing one reason to non-experts why such groups are interesting.
On the other hand, if $\Gamma$ is finitely generated,
then $\Gamma$ is $FC$-hypercentral if and only if it is virtually nilpotent.
 
\section{Lyons-Sullivan discretization}
\label{seclsd}
Following earlier work of Furstenberg,
Lyons and Sullivan (LS) constructed a discretization of Brownian motion on Riemannian manifolds \cite{LS}.
The LS-construction actually applies to diffusion processes,
and this is one of the objectives of this section.
Our presentation of the LS-construction is close to the one in \cite{BL2},
where the original case of Brownian motion is discussed.

\subsection{Balayage and $L$-harmonic functions}
\label{subset}
Let $F\subseteq M$ be closed and $V\subseteq M$ be open.
For $\omega\in\Omega$, the respective \emph{hitting} and \emph{exit time},
\begin{equation}\label{rfsv}
\begin{split}
  R^F(\omega) &= \inf \{ t\ge0 \mid \omega(t)\in F \}, \\
  S^V(\omega) &= \inf \{ t\ge0 \mid \omega(t) \in M\setminus V\},
\end{split}
\end{equation}
are stopping times.
For a Borel subset $A\subseteq M$, let
\begin{equation}
\begin{split}
  \beta(\mu,F)(A) &= \beta_\mu^F(A) = P_\mu(\omega(R^F(\omega))\in A), \\
  \ve(\mu,V)(A) &= \ve_\mu^V(A) = P_\mu(\omega(S^V(\omega))\in A),
\end{split}
\end{equation}
where $\beta$ stands for \emph{balayage} and $\ve$ for \emph{exit}.
In the case of Dirac measures, $\mu=\delta_x$, we use the shorthand $x$ for $\delta_x$.
If $\beta(x,F)(F)=1$ for all $x\in M$, then $F$ is said to be \emph{recurrent}.
This is equivalent to $R^F<\infty$ almost surely with respect to each $P_x$.

\begin{prop}\label{swel}
Let $F$ be a recurrent closed subset of $M$, $\mu$ a finite measure on $M$, and $h\colon M\to\R$ an $L$-harmonic function.
Then we have:
\begin{enumerate}
\item\label{swelb}
If $h$ is bounded, then $\mu(h)=\beta(\mu,F)(h)$.
\item\label{swelp}
If $h$ is positive, then $\beta(\mu,F)(h)\le\mu(h)$.
\end{enumerate}
\end{prop}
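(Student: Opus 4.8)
The plan is to exploit the fact that $L$-harmonicity of $h$ turns $h\circ D_t$ into a local martingale, and then to apply optional stopping at the hitting time $R^F$, using recurrence of $F$ to control the stopped process. Integrating the resulting one-point identities against $\mu$ gives the two assertions.

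First I would show that $N_t:=h\circ D_t$ is a continuous local martingale under each $P_x$. The martingale identity \eqref{fx} is only available for $f\in C^\infty_c(M)$, so I would localize. Fix an exhaustion $U_1\Subset U_2\Subset\cdots$ of $M$ by relatively compact open sets with exit times $S_k=S^{U_k}$, and choose $\chi_k\in C^\infty_c(M)$ with $\chi_k=1$ on $\overline{U_k}$. Applying \eqref{fx} to $h_k:=\chi_k h$ and noting that $Lh_k=Lh=0$ on $U_k$ while $h_k=h$ on $\overline{U_k}$, one finds that $t\mapsto h(D_{t\wedge S_k})-h(D_0)$ is a martingale for each $k$. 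Since $(D_t)$ has infinite lifetime, $S_k\to\infty$ almost surely, so $(S_k)$ localizes $N$, and $N$ is a continuous local martingale with $N_0=h(x)$ under $P_x$ (because $P_x[\omega(0)=x]=1$). The sign hypotheses then upgrade this: if $h$ is bounded, $N$ is a bounded local martingale, hence a genuine uniformly integrable martingale; if $h\ge0$, $N$ is a nonnegative local martingale, hence a supermartingale.

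Next I would apply optional stopping at the bounded times $R^F\wedge n$: in the bounded case $E_x[N_{R^F\wedge n}]=h(x)$, and in the positive case $E_x[N_{R^F\wedge n}]\le h(x)$. Because $F$ is recurrent, $R^F<\infty$ holds $P_x$-almost surely, and continuity of the paths gives $N_{R^F\wedge n}=h(\omega(R^F\wedge n))\to h(\omega(R^F))$ as $n\to\infty$. Passing to the limit by bounded convergence when $h$ is bounded, and by Fatou when $h\ge0$, yields
\[
	E_x[h(\omega(R^F))]=h(x)\quad\text{respectively}\quad E_x[h(\omega(R^F))]\le h(x).
\]
Finally, since $P_\mu=\int_M\mu(dx)P_x$ and $\beta(\mu,F)(h)=E_{P_\mu}[h(\omega(R^F))]$, Fubini (using that $\mu$ is finite and $h$ bounded) resp.\ Tonelli (using $h\ge0$) gives $\beta(\mu,F)(h)=\mu(h)$ in case \eqref{swelb} and $\beta(\mu,F)(h)\le\mu(h)$ in case \eqref{swelp}, the latter being vacuous when $\mu(h)=\infty$.

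The main obstacle I expect is the passage $n\to\infty$ in the optional stopping argument, since $R^F$ is only almost surely finite and need not be bounded: this is precisely where recurrence (finiteness of $R^F$), path continuity (to identify the limit as $h(\omega(R^F))$), and the distinction bounded vs.\ nonnegative (selecting bounded convergence vs.\ Fatou, and thereby producing equality vs.\ inequality) all come into play. The localization step establishing the local martingale property is routine but should be written out carefully, as \eqref{fx} is only stated for compactly supported test functions.
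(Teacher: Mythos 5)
Your proposal is correct and takes essentially the same approach as the paper: both localize the martingale identity \eqref{fx} via compactly supported cutoffs of $h$, reduce to the Dirac case, apply optional stopping, and use recurrence of $F$ together with dominated convergence in the bounded case and Fatou's lemma in the positive case. The only cosmetic difference is that you exhaust all of $M$ and truncate in time at $R^F\wedge n$, whereas the paper exhausts $M\setminus F$ by relatively compact open sets whose exit times increase to $R^F$.
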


\begin{proof}
It suffices to consider the case where $\mu=\delta_x$.
For $x\in F$, we have $\beta(x,F)=\delta_x$ and hence that $\beta(x,F)(h)=h(x)$.
Let now $x\in M\setminus V$ and
\begin{align*}
	x \in U_1 \subseteq U_2 \subseteq U_3 \subseteq \dots
\end{align*}
be a sequence of relatively compact open subsets exhausting $M\setminus F$.
Let $\chi_n$ be a cut-off function on $M$ with $U_n\subseteq\{\chi_n=1\}\subseteq\supp\chi_n\subseteq U_{n+1}$ and set $f_n=\chi_nh$.
Then $f_n\in C^\infty_c(M)$ and hence
\begin{align*}
	Y_t = f_n\circ X_t - f_n(x) - \int_0^t ((Lf_n)\circ X_s) ds 
\end{align*}
is a $P_x$-martingale, hence also $Y_{t\wedge\sigma_n}$, where $\sigma_n$ denotes the exit time from $U_n$.
Since $f_n$ and $h$ coincide on $U_n$ and $h$ is $L$-harmonic,
we get that $h\circ X_{t\wedge\sigma_n}$ is a $P_x$-martingale and also that
$h\circ X_{t\wedge\sigma_n} = E_x[h\circ X_{\sigma_n}|\mathcal{F}_{t\wedge\sigma_n}]$.
In particular, $h(x)=E_x[h\circ X_{\sigma_n}]$.
Now we have the increasing limit $\sigma_n\to\tau$, in particular the convergence $X_{\sigma_n(\omega)}(\omega)\to X_{\tau(\omega)}(\omega)$,
where $\tau$ denotes the time of hitting $F$.
In the case where $h$ is bounded, the dominated convergence theorem implies
that $h(x)=E_x[h\circ X_{\tau}]$.
In the case where $h$ is positive, the Fatou lemma gives
\begin{align*}
	h(x) = E_x[h\circ X_{\sigma_n}]
	&= \liminf E_x[h\circ X_{\sigma_n}] \\
	&\ge  E_x[\liminf(h\circ X_{\sigma_n})]
	= E_x[h\circ X_{\tau}],
\end{align*}
where we use that $(h\circ X_{\sigma_n(\omega)})(\omega)\to(h\circ X_{\tau(\omega)})(\omega)$.
\end{proof}

\begin{cor}\label{swel2}
If the diffusion process on $M$ is recurrent,
then positive $L$-harmonic functions on $M$ are constant.
\end{cor}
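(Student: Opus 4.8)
The plan is to deduce constancy directly from the positive-case balayage inequality in \cref{swel}.\ref{swelp}, exploiting that recurrence of the diffusion makes small closed balls recurrent in the stopping-time sense of \cref{subset}. First I would fix two points $x,y\in M$ with $x\ne y$ and, for $0<r<d(x,y)$ (distance in the Riemannian metric associated to $L$), set $F_r=\overline{B(y,r)}$, the closed metric ball about $y$. Since $(D_t)$ is recurrent, almost every path started at $x$ hits the nonempty open ball $B(y,r)\subseteq F_r$, so that $R^{F_r}<\infty$ almost surely with respect to $P_x$. Hence $F_r$ is recurrent in the sense of \cref{subset}, and $\beta(x,F_r)=\beta(\delta_x,F_r)$ is a probability measure carried by $F_r$.

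Next, let $h$ be a positive $L$-harmonic function. Applying \cref{swel}.\ref{swelp} with $\mu=\delta_x$ gives $\beta(x,F_r)(h)\le h(x)$. On the other hand, since $\beta(x,F_r)$ is a probability measure supported on $F_r$, we have $\beta(x,F_r)(h)=\int_{F_r}h\,d\beta(x,F_r)\ge\inf_{F_r}h$. Combining the two bounds yields $h(x)\ge\inf_{F_r}h$ for every sufficiently small $r>0$. As $h$ is smooth, hence continuous, $\inf_{F_r}h\to h(y)$ when $r\to0$, and therefore $h(x)\ge h(y)$. Interchanging the roles of $x$ and $y$ (the argument is symmetric) gives $h(y)\ge h(x)$, so $h(x)=h(y)$; since $x$ and $y$ were arbitrary, $h$ is constant.

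The only step needing genuine care is the first one: I must argue precisely that recurrence of $(D_t)$ forces every closed ball with nonempty interior to be recurrent in the hitting-time sense required by \cref{swel}, i.e.\ that $R^{F_r}<\infty$ $P_x$-almost surely. This is where the notion of recurrence of the diffusion is actually used, and I expect it to be the main point to pin down; once it is in hand, the rest is an immediate consequence of the probability-measure property of $\beta(x,F_r)$ together with the continuity of $h$, and no weak convergence of hitting measures is needed—only the crude lower bound by $\inf_{F_r}h$.
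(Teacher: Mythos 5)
Your proof is correct, and its core coincides with the paper's: both deduce from recurrence of $(D_t)$ that closed balls are recurrent in the hitting-time sense (a step the paper also asserts without further detail) and then apply the balayage inequality of \cref{swel}.\ref{swelp} with $\mu=\delta_x$. Where you differ is the end-game. The paper keeps a fixed ball $F$, observes that $h(x)\ge\beta(x,F)(h)\ge\min_F h$ makes the minimum of $h$ over $F$ a global minimum of $h$, and then invokes the strong maximum principle for the elliptic operator $L$ to conclude constancy. You instead shrink the ball around an arbitrary point $y$, use only the crude bound $\beta(x,F_r)(h)\ge\inf_{F_r}h$ together with continuity of $h$ to get $h(x)\ge h(y)$, and finish by symmetry in $x$ and $y$. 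Your route buys independence from the maximum principle --- nothing beyond continuity of $h$ and the fact that the hitting distribution is a probability measure carried by $F_r$ is used --- at the cost of an extra limiting argument; the paper's route is shorter but imports the strong minimum principle as an external tool. Both arguments hinge on the same key lemma, so the difference is one of packaging rather than of substance, but your version is the more self-contained of the two.
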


\begin{proof}
If the diffusion process is recurrent, then any closed ball in $M$ is recurrent.
By \cref{swel},
a minimum of a positive harmonic function $h$ on such a ball is a global minimum of $h$,
and hence $h$ is constant, by the maximum principle.
\end{proof}

\begin{rem}\label{swel4}
The analogue of \cref{swel2} also holds for random walks.
\end{rem}

An $L$-harmonic function $h$ on $M$ is said to be \emph{swept by $F$}
if $\beta(x,F)(h)=h(x)$ for all $x\in M$.
Then
\begin{align}\label{swee}
	\mu(h)=\beta(\mu,F)(h).
\end{align}
for all finite measures $\mu$ on $M$.
By \cref{swel}.\ref{swelb}, any bounded $L$-harmonic function is swept by any recurrent closed subset of $M$.

\subsection{$LS$-discretization and $L$-harmonic functions}
\label{sublsdh}
Let $X$ be a discrete subset of $M$.
Families $(F_x)_{x\in X}$ of compact subsets and $(V_x)_{x\in X}$ of relatively compact open subsets of $M$
together with a constant $C>1$  will be called \emph{regular Lyons-Sullivan data for $X$}
or, for short,  \emph{LS-data for $X$} if
\begin{enumerate}[label=(D\arabic*)]
\item\label{d1}
$x\in\mathring{F}_x$ and $F_x\subseteq V_x$ for all $x\in X$;
\item\label{d2}
$F_x\cap V_y=\emptyset$ for all $x\ne y$ in $X$;
\item\label{d3}
$F=\cup_{x\in X}F_x$ is closed and recurrent;
\item\label{d4}
for all $x\in X$ and $y\in F_x$,
\begin{equation*}
  \frac{1}{C} < \frac{d\ve(y,V_x)}{d\ve(x,V_x)} < C.
\end{equation*}
\end{enumerate}
We say that $X$ is \emph{$*$-recurrent} if it admits LS-data.
Our requirements  \ref{d1} and \ref{d2} are more restrictive than the corresponding ones in \cite{LS}
and are conform to the ones in \cite{BL2}.

Suppose now that we are given LS-data as above.
For a finite measure $\mu$ on $M$, define measures
\begin{align}\label{lsmm}
	\mu' = \sum_{x\in X} \int_{F_x}\beta_\mu^F(dy)(\ve_y^{V_x}-\frac1C\ve_x^{V_x})
	\quad\text{and}\quad
	\mu'' = \frac1C\sum_{x\in X} \int_{F_x}\beta_\mu^F(dy)\delta_x
\end{align}
on $M$ with support on $\cup_{x\in X}\partial V_x$ and $X$, respectively.

\begin{prop}\label{lsh1}
If $h$ is a positive $L$-harmonic function on $M$ swept by $F$ and $\mu$ is a finite measure on $M$, then
\[\mu(h)=\mu'(h)+\mu''(h) \quad\text{and}\quad \mu'(h)\le(1-\frac1{C^2})\mu(h).\]
\end{prop}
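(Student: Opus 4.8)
The plan is to reduce everything to a local mean value identity for $h$ over exit distributions and then to read off both assertions using the cancellation built into the definition \eqref{lsmm} and the comparison \ref{d4}. Since $h$ is swept by $F$, \eqref{swee} gives $\mu(h)=\beta_\mu^F(h)$ at the outset. The sets $F_x$ are pairwise disjoint: by \ref{d1} we have $F_y\subseteq V_y$, and by \ref{d2} we have $F_x\cap V_y=\emptyset$, so $F_x\cap F_y=\emptyset$ for $x\ne y$. As $F=\cup_x F_x$ carries $\beta_\mu^F$, the measure decomposes and
\[
  \mu(h)=\sum_{x\in X}\int_{F_x}\beta_\mu^F(dy)\,h(y).
\]

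The key local identity I would establish first is that for every $x\in X$ and every $y\in V_x$,
\[
  h(y)=\ve_y^{V_x}(h).
\]
Because $V_x$ is relatively compact, $\bar V_x$ is compact, so $h$ is bounded on $\bar V_x$, and the exit time $S^{V_x}$ is finite $P_y$-almost surely (the diffusion is uniformly elliptic on the compact set $\bar V_x$, so it leaves $V_x$ in finite expected time). The martingale argument of \cref{swel}, applied to $V_x$ in place of the exhaustion of $M\setminus F$, then yields \emph{equality} rather than the mere Fatou inequality, since on $\bar V_x$ dominated convergence applies. By \ref{d1} both $x\in V_x$ and every $y\in F_x\subseteq V_x$, so in particular $h(x)=\ve_x^{V_x}(h)$ and $h(y)=\ve_y^{V_x}(h)$ for $y\in F_x$.

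With these identities the first assertion is a direct computation: substituting into \eqref{lsmm},
\[
  \mu'(h)+\mu''(h)=\sum_{x}\int_{F_x}\beta_\mu^F(dy)\Bigl(\ve_y^{V_x}(h)-\tfrac1C\ve_x^{V_x}(h)\Bigr)+\tfrac1C\sum_{x}\int_{F_x}\beta_\mu^F(dy)\,\ve_x^{V_x}(h),
\]
and the two terms carrying $\ve_x^{V_x}(h)$ cancel, leaving $\sum_x\int_{F_x}\beta_\mu^F(dy)\,\ve_y^{V_x}(h)=\sum_x\int_{F_x}\beta_\mu^F(dy)\,h(y)=\mu(h)$. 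For the inequality I would rewrite it, using $\mu'(h)=\mu(h)-\mu''(h)$, as the equivalent statement $\mu''(h)\ge\frac1{C^2}\mu(h)$. Here \ref{d4} enters through its upper bound: for $y\in F_x$ the Radon--Nikodym density $d\ve_y^{V_x}/d\ve_x^{V_x}$ is less than $C$, so integrating the positive function $h$ gives the pointwise estimate $h(y)=\ve_y^{V_x}(h)\le C\,\ve_x^{V_x}(h)=C\,h(x)$, i.e. $\tfrac1C h(x)\ge\tfrac1{C^2}h(y)$. Summing and integrating against $\beta_\mu^F$ yields $\mu''(h)=\frac1C\sum_x\int_{F_x}\beta_\mu^F(dy)\,h(x)\ge\frac1{C^2}\sum_x\int_{F_x}\beta_\mu^F(dy)\,h(y)=\frac1{C^2}\mu(h)$, which is exactly what is needed.

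I expect the main obstacle to be the careful justification of the local identity $h(y)=\ve_y^{V_x}(h)$ for a merely \emph{positive} (not a priori bounded) harmonic $h$: one must argue that the relative compactness of $V_x$ upgrades the Fatou inequality of \cref{swel}.\ref{swelp} to an equality, via boundedness of $h$ on $\bar V_x$ and almost sure finiteness of $S^{V_x}$. Once this is in place, the rest is bookkeeping that depends only on the disjointness from \ref{d1}--\ref{d2} and the upper comparison in \ref{d4}; note that the lower comparison in \ref{d4} is not required for either of the two stated conclusions.
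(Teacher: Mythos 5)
Your proposal is correct and follows essentially the same route as the paper: both rest on the local mean value identity $h(y)=\ve_y^{V_x}(h)$ for $y\in F_x$ (which the paper uses implicitly and you rightly justify via the martingale argument on the relatively compact $V_x$), combined with sweeping to get $\beta_\mu^F(h)=\mu(h)$, and your inequality $\mu''(h)\ge\frac1{C^2}\mu(h)$ is just an algebraic rearrangement of the paper's pointwise estimate $\ve_y^{V_x}(h)-\frac1C\ve_x^{V_x}(h)\le(1-\frac1{C^2})\ve_y^{V_x}(h)$, both using only the upper bound in \ref{d4}.
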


\begin{proof}
For the first assertion, we compute
\begin{align*}
	\mu'(h)
	&= \sum_{x\in X} \int_{F_x}\beta_\mu^F(dy)(\ve_y^{V_x}(h)-\frac1C\ve_x^{V_x}(h)) \\
	&= \sum_{x\in X} \int_{F_x}\beta_\mu^F(dy)(h(y)-\frac1Ch(x)) \\
	&= \int_{F}\beta_\mu^F(dy)h(y) - \mu''(h) \\
	&= \beta_\mu^F(h) - \mu''(h)
	= \mu(h) - \mu''(h).
\end{align*}
Moreover, by \ref{d4}, we have
\begin{align*}
	\ve_y^{V_x}(h)-\frac1C\ve_x^{V_x}(h)
	\le (1-\frac1{C^2})\ve_y^{V_x}(h)
\end{align*}
for all $x\in X$ and $y\in F_x$.
This implies the second assertion.
\end{proof}

For $y\in M$, let now
\begin{equation}\label{mm2}
	\mu_{y,0}=
	\begin{cases}
	\delta_y &\text{if $y\notin X$,} \\
	\ve(y,V_y) &\text{if $y\in X$,} 
\end{cases}
\end{equation}
and set recursively, for $n\ge1$,
\begin{equation}\label{mmn}
  \mu_{y,n} = (\mu_{y,n-1})' \quad\text{and}\quad \tau_{y,n} = (\mu_{y,n-1})''.
\end{equation}
The associated \emph{LS-measure} is the probability measure
\begin{equation}\label{lsm}
  \mu_y = \sum_{n\ge1} \tau_{y,n}
\end{equation}
with support on $X$.

\begin{prop}\label{lsm2}
The LS-measures $\mu_y$ have the following properties:
\begin{enumerate}
\item\label{lsm2a}
$\mu_y$ is a probability measure on $X$ such that $\mu_y(x)>0$ for all $x\in X$;
\item\label{lsm2b}
for any $x\in X$ and diffeomorphism $\gamma$ of $M$ leaving $L$, $X$, and the LS-data invariant,
\begin{equation*}
  \mu_{\gamma y}(\gamma x) = \mu_y(x);
\end{equation*}
\item\label{lsm2c}
for all $x\in X$,
\begin{equation*}
  \mu_x = \int_{\partial V_x} \ve_x^{V_x}(dy)\mu_y;
\end{equation*}
\item\label{lsm2d}
for all $x\in X$ and $y\in F_x$ different from $x$,
\begin{equation*}
  \mu_y =  \frac1C\delta_x + \int_{\partial V_x}\ve_x^{V_x}(dz) (\frac{d\ve(y,V_x)}{d\ve(x,V_x)}-\frac1C)\mu_z;
\end{equation*}
\item\label{lsm2e}
for any $y\in M\setminus F$ and stopping time $T\le R^F$,
\begin{equation*}
  \mu_y = \int \pi_y^T(dz)\mu_z,
\end{equation*}
where $\pi_y^T$ denotes the distribution of $P_y$ at time $T$.
\end{enumerate}
\end{prop}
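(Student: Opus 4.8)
The backbone of my plan is a single linear \emph{total deposit map} $\Phi$: to a finite measure $\nu$ on $M$ it assigns the measure $\Phi(\nu)=\sum_{n\ge1}(\nu_{n-1})''$ on $X$, where $\nu_0=\nu$ and $\nu_k=(\nu_{k-1})'$ are the iterates under the two operations of \eqref{lsmm}. By \eqref{mm2}--\eqref{lsm} we then have $\mu_y=\Phi(\mu_{y,0})$ with $\nu_k=\mu_{y,k}$. I would first record four formal properties of $\Phi$, from which everything else follows almost mechanically. (i) $\Phi$ is linear and sends positive measures to positive measures supported on $X$, since both operations in \eqref{lsmm} are linear and positive and $(\cdot)''$ lives on $X$. (ii) $\Phi$ preserves total mass: applying \cref{lsh1} to the bounded (hence $F$-swept) $L$-harmonic function $h\equiv1$ gives $\nu_k(M)=\nu_{k+1}(M)+(\nu_k)''(M)$ and $\nu_{k+1}(M)\le(1-\frac1{C^2})\nu_k(M)$, so the defining series telescopes to $\Phi(\nu)(M)=\nu_0(M)-\lim_n\nu_n(M)=\nu(M)$. (iii) Both operations in \eqref{lsmm} depend on $\nu$ only through $\beta_\nu^F$, and $\beta_{\beta_\nu^F}^F=\beta_\nu^F$ because $R^F\equiv0$ on $F$; hence $\Phi(\nu)=\Phi(\beta_\nu^F)$. (iv) Re-indexing the series yields the one-step recursion $\Phi(\nu)=\nu''+\Phi(\nu')$. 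Since $\mu_{z,0}=\delta_z$ for every $z\notin X$ by \eqref{mm2}, property (iii) gives the fact I use repeatedly: $\Phi(\delta_z)=\mu_z$ for all $z\notin X$.

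With these in hand, \ref{lsm2a} is immediate apart from strict positivity: $\mu_y=\Phi(\mu_{y,0})$ is supported on $X$ by (i), and it has total mass $1$ by (ii) because $\mu_{y,0}$ (either $\delta_y$ or $\ve(y,V_y)$) is a probability measure. For \ref{lsm2b}, a diffeomorphism $\gamma$ preserving $L$ carries the diffusion to itself, so it intertwines all hitting and exit measures, $\gamma_*\beta_\mu^F=\beta_{\gamma_*\mu}^F$ and $\gamma_*\ve_y^{V_x}=\ve_{\gamma y}^{V_{\gamma x}}$; as $\gamma$ permutes the $LS$-data by $x\mapsto\gamma x$, inspection of \eqref{lsmm} gives $\gamma_*(\nu')=(\gamma_*\nu)'$ and $\gamma_*(\nu'')=(\gamma_*\nu)''$, hence $\gamma_*\circ\Phi=\Phi\circ\gamma_*$. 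Since $\gamma_*\mu_{y,0}=\mu_{\gamma y,0}$, we get $\mu_{\gamma y}=\gamma_*\mu_y$, which evaluated on $\{\gamma x\}$ is exactly \ref{lsm2b}.

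The identities \ref{lsm2c}--\ref{lsm2e} are where the properties of $\Phi$ pay off. For \ref{lsm2e}, let $y\in M\setminus F$ and $T\le R^F$. The strong Markov property at $T$ gives $\beta_y^F=\int\pi_y^T(dz)\beta_z^F$, so by (iii), linearity, and (iii) again,
\[
\mu_y=\Phi(\delta_y)=\Phi(\beta_y^F)=\int\pi_y^T(dz)\,\Phi(\beta_z^F)=\int\pi_y^T(dz)\,\Phi(\delta_z)=\int\pi_y^T(dz)\,\mu_z,
\]
where the interchange of $\Phi$ with the integral is justified by positivity, and the last step uses $\Phi(\delta_z)=\mu_z$; this is legitimate because $\omega(T)\notin X$ almost surely, a point of $X$ lying in $\mathring F$ while the path at a time $T\le R^F$ is either outside $F$ (if $T<R^F$) or on $\partial F$ (if $T=R^F$). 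For \ref{lsm2c}, note $\mu_{x,0}=\ve_x^{V_x}$ is supported on $\partial V_x$ and $\partial V_x\cap X=\emptyset$ (for $x'\ne x$ the neighbourhood $\mathring F_{x'}$ of $x'$ misses $V_x$ by \ref{d2}, and $x\in V_x$ by \ref{d1}); linearity then gives $\mu_x=\int_{\partial V_x}\ve_x^{V_x}(dz)\,\Phi(\delta_z)=\int_{\partial V_x}\ve_x^{V_x}(dz)\,\mu_z$. For \ref{lsm2d}, take $y\in F_x$ with $y\ne x$; then $y\notin X$ (as $X\cap F_x=\{x\}$) and $\beta_y^F=\delta_y$, so the recursion (iv) gives $\mu_y=(\delta_y)''+\Phi((\delta_y)')=\frac1C\delta_x+\Phi(\ve_y^{V_x}-\frac1C\ve_x^{V_x})$. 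Expanding by linearity, using $\Phi(\ve_y^{V_x})=\int_{\partial V_x}\ve_y^{V_x}(dz)\,\mu_z$ (as for \ref{lsm2c}) and $\Phi(\ve_x^{V_x})=\mu_x$, and rewriting $\int\ve_y^{V_x}(dz)\,\mu_z-\frac1C\int\ve_x^{V_x}(dz)\,\mu_z$ as a single integral against $\ve_x^{V_x}$ through the density $d\ve(y,V_x)/d\ve(x,V_x)$, produces exactly \ref{lsm2d}.

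Finally I return to strict positivity in \ref{lsm2a}, which I expect to be the only substantial point. From \eqref{lsmm} one reads off $\mu_y(\{x\})=\frac1C\sum_{n\ge0}\beta_{\mu_{y,n}}^F(F_x)$, so it suffices to find one $n$ with $\beta_{\mu_{y,n}}^F(F_x)>0$, i.e.\ to reach a position from which $F_x$ is hit first with positive probability; note that \ref{lsm2d} already gives $\mu_w(\{x\})\ge\frac1C$ for $w\in F_x\setminus\{x\}$, confirming the target is nondegenerate. The hard part is the reachability itself. Using that $M$ is connected and the transition densities $p_t(\cdot,\cdot)$ are strictly positive, each step $\nu\mapsto\nu'$ transports a positive amount of mass from $F_{x'}$ across $V_{x'}$ to $\partial V_{x'}$, so the relation ``$\beta_{\ve_{x'}^{V_{x'}}}^F(F_{x''})>0$'' makes $X$ into a strongly connected graph; chaining such hops while retaining positive surviving mass at each (possible since $1-\frac1C>0$) yields $\beta_{\mu_{y,n}}^F(F_x)>0$ for some $n$, whence $\mu_y(\{x\})>0$. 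This geometric irreducibility is the crux of the proposition; the remaining assertions are formal consequences of the four properties of $\Phi$ together with the strong Markov property.
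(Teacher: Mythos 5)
Everything in your proposal except strict positivity is correct, and it is in substance the paper's own (very terse) proof written out in full: the paper obtains the probability-measure claim from the same telescoping mass identity with geometric decay (your property (ii) is \cref{lsh1} applied to $h\equiv1$, which is how the paper gets $\mu_{y,n}(M)\le(1-\frac1{C^2})^n$), declares \eqref{lsm2b} obvious, and derives \eqref{lsm2c}--\eqref{lsm2e} ``from the definition of the LS-measures and the strong Markov property, observing that $\beta_y^F=\delta_y$'' in \eqref{lsm2d}. Your total deposit map $\Phi$ with properties (i)--(iv) is a clean formalization of exactly this, and the two side points you verify --- that $\partial V_x\cap X=\emptyset$ (via \ref{d1} and \ref{d2}) and that $\omega(T)\notin X$ almost surely in \eqref{lsm2e}, so that $\Phi(\delta_z)=\mu_z$ may be used under the integral --- are precisely the points that need checking.

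The gap is where you yourself place the crux: positivity of $\mu_y(x)$. Your reduction is sound ($\mu_y(x)=\frac1C\sum_{n\ge0}\beta^F_{\mu_{y,n}}(F_x)$, and hops chain because the measure emitted from $F_{x'}$ is mutually absolutely continuous with $\ve_{x'}^{V_{x'}}$ by \ref{d4} and carries mass fraction $1-\frac1C>0$), but the sentence deriving strong connectivity of the hop graph from ``$M$ connected and $p_t>0$'' is a non sequitur. Positive transition densities control occupation probabilities, not first-hitting distributions: $\beta^F_z(F_{x''})$ can vanish for all $z$ outside a ``shield'', e.g.\ when $F_{x'}$ is an annulus enclosing $F_{x''}$, a configuration fully consistent with \ref{d1}--\ref{d4}; so the one-step hop relation is genuinely not all-to-all, and that its transitive closure connects everything is exactly what must be proved. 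The missing idea --- and what makes the paper's one-line justification ``clear since $P_y[\omega(t)\in F_x\text{ for some }t>0]>0$'' actually work --- is the following consequence of \ref{d2} and the structure of the stopping times in \eqref{rs}: a trajectory that visits $F_x$ at some time $t\ge S_0$ must satisfy $\omega(R_n)\in F_x$ for some $n\ge1$, because on each interval $[S_{n-1},R_n)$ the path is off $F$ entirely, while on $[R_n,S_n)$ it stays inside $V_{X_n}$, which meets no $F_x$ with $x\ne X_n$. Since visiting $\mathring F_x$ after $S_0$ has positive probability by the Markov property and positive densities, it follows that with positive probability $X_n=x$ for some $n$; then, by the strong Markov property and the \ref{d4}-absolute continuity of exit measures, each consecutive pair $X_j,X_{j+1}$ realized with positive probability is a hop of your graph, so a finite hop chain from any $x'$ to any $x''$ exists. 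With this observation your chaining argument closes; without it, the claimed strong connectivity is unsupported and the proof of \eqref{lsm2a} is incomplete.
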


\begin{proof}
For the total mass of the $\mu_{y,n}$,
we have $\mu_{y,0}(M)=1$ and, recursively,
\begin{align*}
	\mu_{y,n}(M) \le (1-\frac1{C^2})\mu_{y,n-1}(M) \le (1-\frac1{C^2})^n\mu_{y,0}(M).
\end{align*}
Since
\begin{align*}
	\mu_{y,0}(M) = \mu_{y,n}(M) + \sum_{1\le k\le n}\tau_{y,k}(M),
\end{align*}
we get that $\mu_y$ inherits all the mass of $\mu_{y,0}$ eventually.
This shows the first claim in \eqref{lsm2a}.
The second is clear since $P_y[\{\omega\in\Omega\mid\text{$\omega(t)\in F_x$ for some $t>0$}\}]>0$ for all $x\in X$.
Assertion \eqref{lsm2b} is obvious.
Assertions \eqref{lsm2c}--\eqref{lsm2e} follow immediately from the definition of the LS-measures and the strong Markov property of the process,
observing that $\beta_y^F=\delta_y$ in  \eqref{lsm2d}.
\end{proof}

In the case of Brownian motion, the following is Theorem 1.10 of \cite{BL2}
(where the reader is referred to the discussion on \cite[p.\,317]{LS}).

\begin{thm}\label{lsh3}
Let $h$ be a positive $L$-harmonic function on $M$.
Then we have:
\begin{enumerate}
\item\label{lsh3a}
If $h$ is swept by $F$, then $\mu_y(h)=h(y)$ for all $y\in M$.
\item\label{lsh3b}
If $h$ is not swept by $F$, then $\mu_y(h)<h(y)$ for all $y\in M$.
\end{enumerate}
\end{thm}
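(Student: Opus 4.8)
The plan is to rerun the telescoping that underlies \cref{lsh1}, but to track the defect of $h$ under balayage. I set $g=h-\beta(\cdot,F)(h)$. By \cref{swel}.\ref{swelp} we have $g\ge0$, and $g$ vanishes on $F$ since $\beta(x,F)=\delta_x$ there; moreover $h$ is swept by $F$ exactly when $g\equiv0$. The computation in the proof of \cref{lsh1} shows, for every finite measure $\mu$ and \emph{without} any sweeping hypothesis, that $\mu'(h)+\mu''(h)=\beta_\mu^F(h)$ (it only uses $\ve_y^{V_x}(h)=h(y)$ for $L$-harmonic $h$, i.e.\ optional stopping on the relatively compact $V_x$). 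Since $\mu(g)=\mu(h)-\beta_\mu^F(h)$, this reads $\mu'(h)+\mu''(h)=\mu(h)-\mu(g)$.

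First I would establish a master identity. Applying this to $\mu=\mu_{y,n-1}$ and using \eqref{mmn} gives $\mu_{y,n-1}(h)=\mu_{y,n}(h)+\tau_{y,n}(h)+\mu_{y,n-1}(g)$. Telescoping over $n=1,\dots,N$, together with $\mu_{y,0}(h)=h(y)$ (by \eqref{mm2}, again using $\ve_y^{V_y}(h)=h(y)$ when $y\in X$), yields $h(y)=\mu_{y,N}(h)+\sum_{n=1}^N\tau_{y,n}(h)+\sum_{n=0}^{N-1}\mu_{y,n}(g)$. The sweeping-free form of the second estimate in \cref{lsh1}, namely $\mu_{y,N}(h)\le(1-C^{-2})\beta^F_{\mu_{y,N-1}}(h)\le(1-C^{-2})\mu_{y,N-1}(h)$ (the last step by \cref{swel}.\ref{swelp}), gives $\mu_{y,N}(h)\le(1-C^{-2})^N h(y)\to0$. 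Letting $N\to\infty$ and recalling \eqref{lsm}, I obtain
\[ h(y)-\mu_y(h)=\sum_{n\ge0}\mu_{y,n}(g)\ge0 \quad\text{for all } y\in M. \]
In particular $\mu_y(h)\le h(y)$ always, and when $h$ is swept we have $g\equiv0$, so the right-hand side vanishes; this proves \eqref{lsh3a}.

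For \eqref{lsh3b}, write $u=h-\mu_\cdot(h)\ge0$; I must show $u>0$ everywhere when $g\not\equiv0$. Two structural facts drive the argument. First, $\Phi=\mu_\cdot(h)$ satisfies the mean-value property $\Phi(y)=E_y[\Phi(D_T)]$ for $y\in M\setminus F$ and every stopping time $T\le R^F$, by \cref{lsm2}.\ref{lsm2e}; being locally bounded ($0\le\Phi\le h$), $\Phi$ is therefore $L$-harmonic on $M\setminus F$, and so is $u=h-\Phi$. Taking $T=R^F$ in \cref{lsm2}.\ref{lsm2e} gives the pointwise relation $u(y)=g(y)+\beta_y^F(u)$ for $y\in M\setminus F$. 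Hence on each connected component of $M\setminus F$ the function $u$ is nonnegative and $L$-harmonic, so it is either strictly positive or identically zero by the strong minimum principle; and on any component meeting $\{g>0\}$ we have $u\ge g>0$. Since $g\not\equiv0$, the set $\{u>0\}$ is nonempty and open.

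The crux is to propagate this positivity to all of $M$. Here I would use that $h(D_t)$ is a nonnegative supermartingale (a positive $L$-harmonic function along the process, by \eqref{fx}) while $\Phi(D_{t\wedge R^F})$ is a martingale by the mean-value property above, so that $u(D_{t\wedge R^F})$ is a nonnegative supermartingale. Consequently, if $u(y)=0$ for some $y\in M\setminus F$, then $u(D_t)=0$ for all $t\le R^F$ almost surely under $P_y$; in particular the process remains in $\{u=0\}$ until it hits $F$, landing there at a point with $u=0$. Combining this with the corrected identities \cref{lsm2}.\ref{lsm2c} and \ref{lsm2d} at points of $F$, and with the full support of the exit distributions $\ve_x^{V_x}$ on $\partial V_x$ (a consequence of positivity of the heat kernel), one should check that $\{u=0\}$ is invariant under the entire trajectory. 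But positivity of the heat kernel also forces the process started at any point to enter the nonempty open set $\{g>0\}\subseteq\{u>0\}$ with positive probability, a contradiction; thus $\{u=0\}=\emptyset$, i.e.\ $\mu_y(h)<h(y)$ for all $y$. I expect this last propagation to be the main obstacle: the delicate points are the passage across $F$, where $u$ is governed by the corrected relations \cref{lsm2}.\ref{lsm2c}, \ref{lsm2d} rather than by harmonicity, and the careful use of full support of the $\ve_x^{V_x}$ together with recurrence of $F$ to make the invariance of $\{u=0\}$ genuinely conflict with the irreducibility of the diffusion.
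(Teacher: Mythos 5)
Part \eqref{lsh3a} of your proposal is correct and is essentially the paper's own argument: the computation proving \cref{lsh1} does yield $\mu'(h)+\mu''(h)=\beta_\mu^F(h)$ with no sweeping hypothesis, and your telescoped defect identity $h(y)-\mu_y(h)=\sum_{n\ge0}\mu_{y,n}(g)$, with $g=h-\beta(\cdot,F)(h)$, is a sharpened form of the induction the paper runs; it settles \eqref{lsh3a} and gives $\mu_y(h)\le h(y)$ everywhere in one stroke.

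The genuine gap is the propagation step in \eqref{lsh3b}, exactly where you flagged it, and the ingredient you invoke does not repair it. First, ``full support of the exit distributions $\ve_x^{V_x}$ on $\partial V_x$'' is not a consequence of heat-kernel positivity and is false in general: $\partial V_x$ may contain a polar set (a point in dimension two, a curve in dimension three), which the diffusion almost surely never hits and which $\ve_x^{V_x}$ therefore does not charge. Second, and more seriously, even granting \cref{lsm2}.\ref{lsm2c} and \ref{lsm2d}, your invariance of $\{u=0\}$ controls the process only at the times $R_n,S_n$ and on the excursions $(S_n,R_{n+1})$, where the path cannot cross $F$ and hence stays in one component of $M\setminus F$ on which $u(D_t)$ is a nonnegative supermartingale started at $0$; it says nothing about the stretches $[R_n,S_n]$ spent inside $V_{X_n}$, where $u$ is not $L$-harmonic. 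So ``the process enters $\{g>0\}$ with positive probability'' produces no contradiction yet: the path could meet $\{g>0\}$ only while inside some $V_x$, with all of the points $Y_n,Z_n$ still lying in $\{u=0\}$. To close the gap one must show that if the path meets a component $W$ of $M\setminus F$ with $u|_W>0$ while inside $V_x$, then with positive probability it exits $V_x$ at a point of $W$, forcing $u(Z_n)>0$ with positive probability. This is provable, but by a support (tube) argument rather than by full support of exit measures: pick $p\in W\setminus\bar V_x$, which exists because $W$ cannot be relatively compact (otherwise optional stopping on $\bar W$ would give $g\equiv0$ on $W$), join the entry point to $p$ by a path inside $W$, and run the diffusion in a tube around that path avoiding $F$. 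For comparison, the paper's own proof of \eqref{lsh3b} is also only a sketch at this very point: it proves $g>0$ on an entire component $U$ of $M\setminus F$ by the strong Markov property and then asserts that the process from any $y$ enters $U$ with positive probability, the implicit final step being that some $\mu_{y,n}$ charges $U$, so that the defect identity gives $h(y)-\mu_y(h)>0$. Your identity is thus the right tool for \eqref{lsh3b} as well; the detour through $u$, the minimum principle, and $\{u=0\}$-invariance can be dropped, but the component/tube argument above is needed in either formulation.
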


\begin{proof}
\eqref{lsh3a}
Let $y\in M$.
Then $\mu_{y,0}(h)=h(y)$.
With $\tau_{y,0}=0$, assume by induction that
\begin{align}\label{lshi}
	\mu_{y,n}(h) + \sum_{1\le k\le n}\tau_{y,k}(h) = h(y)
	\quad\text{with}\quad
	\mu_{y,n}(h) \le (1-\frac1{C^2})^nh(y).	
\end{align}
By \cref{lsh1}, applied to $\mu=\mu_{y,n}$, we then get
\begin{align*}
	h(y)
	&=\mu_{y,n}(h) + \sum_{1\le k\le n}\tau_{y,k}(h) \\
	&= \mu_{y,n+1}(h) + \tau_{y,n+1}(h) + \sum_{1\le k\le n}\tau_{y,k}(h) \\
	&= \mu_{y,n+1}(h) + \sum_{1\le k\le n+1}\tau_{y,k}(h)
\end{align*}
with
\begin{align*}
	\mu_{y,n+1}(h) \le (1-\frac1{C^2})\mu_{y,n}(h) \le (1-\frac1{C^2})^{n+1}h(y).
\end{align*}
Hence \eqref{lshi} holds for all $n$.
The asserted equality $\mu_y(h)=h(y)$ is an immediate consequence.

\eqref{lsh3b}
If $h$ is not swept by $F$, there is a point $z_0\in M\setminus F$ such that $\beta(z_0,F)(h)<h(z_0)$.
Then, by the strong Markov property of the process, $\beta(z,F)(h)<h(z)$
for all $z$ in the connected neighborhood $U$ of $M\setminus F$ containing $z_0$.
Now starting at any $y\in M$, the probability of entering $U$ at some positive time is positive. 
\end{proof}

Denote by $\mathcal{H}^+(M,L)$, $\mathcal{H}^\infty(M,L)$, $\mathcal{H}^+(X,\mu)$, and $\mathcal{H}^+(X,\mu)$
the spaces of positive and bounded $L$-harmonic and $\mu$-harmonic functions on $M$ and $X$, respectively.
Let $\mathcal{H}_F^+(M,L)$ be the space of positive $L$-harmonic functions on $M$ swept by $F$.

For functions $f$ on $M$ and $g$ on $X$, denote by $Rf$ the restriction of $f$ to $X$
and by $Eg$ the function on $M$ given by $Eg(y)=\mu_y(g)$.

\begin{thm}\label{harmp}
Restriction $R\colon\mathcal{H}_F^+(M,L)\to\mathcal{H}^+(X,\mu)$ is an isomorphism with inverse $E$.
In particular, $R\colon\mathcal{H}^\infty(M,L)\to\mathcal{H}^\infty(X,\mu)$ is an isomorphism.
\end{thm}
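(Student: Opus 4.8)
The plan is to verify that the restriction map is a well-defined, injective, and surjective morphism of cones, and then to read off the bounded statement. Well-definedness and injectivity are immediate from \cref{lsh3}. For $h\in\mathcal H_F^+(M,L)$, \cref{lsh3}.\ref{lsh3a} gives $h(y)=\mu_y(h)=\sum_{x\in X}\mu_y(x)h(x)$ for every $y\in M$; taking $y=x\in X$ is exactly the defining relation \eqref{pharm} of $\mu$-harmonicity, so $h|_X\in\mathcal H^+(X,\mu)$, while the same formula recovers $h$ on all of $M$ from $h|_X$ and hence yields injectivity.

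The content of the statement is surjectivity, which I would establish through the explicit candidate inverse $f\mapsto h$ with $h(y):=\mu_y(f)=\sum_{x\in X}\mu_y(x)f(x)$. Then $h|_X=f$ is immediate, since $h(x)=\mu_x(f)=\sum_{z}\mu_x(z)f(z)=f(x)$ for $x\in X$ by $\mu$-harmonicity, and positivity follows from $\mu_y(x)>0$ for all $x$ (\cref{lsm2}.\ref{lsm2a}). The sweeping property comes from \cref{lsm2}.\ref{lsm2e} with $T=R^F$: for $y\notin F$ it reads $h(y)=\int\pi_y^{R^F}(dz)\,h(z)=\beta(y,F)(h)$, and for $y\in F$ sweeping is trivial because $\beta_y^F=\delta_y$.

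The main obstacle is to show that $h$ is $L$-harmonic, for which I would turn the relations of \cref{lsm2} into exit-averaging identities. On the open set $M\setminus F$, \cref{lsm2}.\ref{lsm2e} with $T=S^B$ the exit time of a small ball $B$ with $\overline{B}\subseteq M\setminus F$ (so that $S^B\le R^F$) gives $h(y)=E_y[h\circ D_{S^B}]$; as such balls form a neighbourhood basis, the standard regularity of the probabilistic Dirichlet problem makes $h$ $L$-harmonic there. The delicate region is the interior of $F$, which contains $X$; here I would prove the single identity $h(y)=E_y[h\circ D_{S^{V_x}}]$ for every $y\in V_x$. For $y\in F_x$ this drops out of \cref{lsm2}.\ref{lsm2c} and \cref{lsm2}.\ref{lsm2d} after substituting $f$ and using $h(x)=f(x)$, whereas for $y\in V_x\setminus F_x\subseteq M\setminus F$ I would apply \cref{lsm2}.\ref{lsm2e} with $T=R^F\wedge S^{V_x}\le R^F$ and, on the event that the process meets $F_x$ before leaving $V_x$, use the strong Markov property together with the identity already obtained on $F_x$ to reassemble $E_y[h\circ D_{S^{V_x}}]$. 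Since $\partial V_x\subseteq M\setminus F$ (by \ref{d1} and \ref{d2}, using that the compact $F_x$ has positive distance to the complement of $V_x$), the boundary datum $h|_{\partial V_x}$ is continuous, so the identity presents $h$ on $V_x$ as the solution of the Dirichlet problem and thus as $L$-harmonic; as $M=(M\setminus F)\cup\bigcup_x V_x$, harmonicity holds on all of $M$. One technical point to clear en route is the finiteness of $h(y)=\mu_y(f)$ for unbounded $f$: it holds because $h$ is finite on $X$ and the exit-averaging identities propagate finiteness by a Harnack and connectedness argument, after which the above goes through.

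Finally, the bounded statement follows formally: each $\mu_y$ is a probability measure, so both $f\mapsto\mu_\cdot(f)$ and $h\mapsto h|_X$ preserve supremum norms, every bounded $L$-harmonic function is swept by the recurrent set $F$ by \cref{swel}.\ref{swelb}, and adding a constant reduces the sign-changing case to the positive one. Hence the cone isomorphism restricts to the asserted linear isomorphism $\mathcal H^\infty(M,L)\to\mathcal H^\infty(X,\mu)$.
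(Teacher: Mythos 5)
Your proposal is correct and takes essentially the route the paper itself relies on: the paper's proof of \cref{harmp} is just the citation of \cite[Theorem 1.11]{BL2} (``mutatis mutandis''), and the argument there is precisely your construction --- well-definedness and injectivity from \cref{lsh3}, and surjectivity by extending $f\in\mathcal{H}^+(X,\mu)$ to $h(y)=\mu_y(f)$ and verifying sweeping and $L$-harmonicity from \cref{lsm2}.\eqref{lsm2c}, \eqref{lsm2d}, \eqref{lsm2e}, with \cref{swel}.\ref{swelb} handling the bounded case. One small blemish: \ref{d1} and \ref{d2} do not yield $\partial V_x\subseteq M\setminus F$ (they keep $F_x$ off $\partial V_x$, but not $F_{x'}$ for $x'\ne x$), so the claimed continuity of the boundary datum is unjustified; this is harmless, because the exit-averaging identity $h(y)=E_y[h\circ D_{S^{V_x}}]$ on $V_x$, together with measurability and the finiteness you establish, already gives $L$-harmonicity (truncate the boundary datum, use that Poisson integrals of bounded measurable data are $L$-harmonic, and pass to the increasing limit via Harnack), so continuity of $h|_{\partial V_x}$ is never needed.
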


In the case of Brownian motion, \cref{harmp} is Theorem 1.11 of \cite{BL2}.
Mutatis mutandis, the proof in \cite{BL2} carries over to our more general setting.

\subsection{$LS$-path discretization and Poisson boundary}
\label{sublsdr}
Let $(F_x,V_x)$ be $LS$-data for $X$
and $(\mu_y)_{y\in M}$ be the associated family of $LS$-measures on $X$. 
For $\omega\in\Omega$, set
\begin{equation}\label{s0}
  S_0(\omega)= 
  \begin{cases}
  0 &\text{if $\omega(0)\notin X$}, \\
  S_0^{V_x}(\omega) &\text{if $\omega(0)=x\in X$,}
\end{cases}
\end{equation}
and recursively, for $n\ge1$,
\begin{equation}\label{rs}
\begin{split}
  R_n(\omega) &= \inf \{ t\ge S_{n-1}(\omega) \mid \omega(t)\in F \}, \\
  S_n(\omega) &= \inf \{ t\ge R_n(\omega) \mid \omega(t)\notin V_{X_n(\omega)} \},
\end{split}
\end{equation}
where $X_n=X_n(\omega)\in X$ with $Y_n=Y_n(\omega)=\omega(R_n(\omega))\in F_{X_n(\omega)}$.

On $\tilde\Omega=\Omega\times[0,1]^\N$,
let $N_0(\omega,\alpha)=0$ and recursively, for $k\ge1$,
\begin{equation}\label{nk}
	N_k(\omega,\alpha)
	= \inf \{ n > N_{k-1}(\omega,\alpha) \mid \alpha_n
	< \kappa(X_n(\omega),Y_n(\omega),Z_n(\omega)) \}
\end{equation}
where we write $\alpha=(\alpha_1,\alpha_2,\dots)\in[0,1]^\N$ and $Z_n=Z_n(\omega)=\omega(S_n(\omega))$
and where
\begin{align}\label{nkk}
	\kappa = \kappa(x,y,z)
	= \frac1C\frac{d\ve(x,V_x)}{d\ve(y,V_x)}(z)
\end{align}
for $x\in X$, $y\in F_x$ and $z\in\partial V_x$.
We also set
\begin{align}\label{tk}
	T_k(\omega,\alpha)=S_{N_k(\omega,\alpha)}(\omega).
\end{align}
For $y\in M$,
let $\tilde P_y$ be the product measure $P_y\otimes\lambda^\N$ on $\tilde\Omega$,
where $\lambda$ denotes the Lebesgue measure on $[0,1]$.
Arguing as in \cite[Chapter 8]{LS}, we arrive at the following two results.

\begin{lem}\label{lsm4}
For $y\in M$, the distribution of $\tilde P_y$ at time $T_1$ is given by
\begin{align*}
	\sum_{x\in X}\mu_y(x)\ve_x^{V_x}.
\end{align*}
In particular, for all $x\in X$, we have $\tilde P_y[X_{N_1}=x]=\mu_y(x)$.
\end{lem}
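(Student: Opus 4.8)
The plan is to read the construction of the stopping time $T_1=S_{N_1}$ as an acceptance--rejection scheme and to match, cycle by cycle, its accepted and rejected outcomes with the measures $\tau_{y,n}$ and $\mu_{y,n}$ entering the definition \eqref{lsm} of $\mu_y$. First I would record the elementary identity
\[
  \kappa(x,y',z)\,\varepsilon_{y'}^{V_x}(dz) = \frac1C\,\varepsilon_x^{V_x}(dz),
\]
valid for $x\in X$ and $y'\in F_x$, which is immediate from the definition \eqref{nkk} of $\kappa$ as $\frac1C$ times the density $d\varepsilon(x,V_x)/d\varepsilon(y',V_x)$. By \ref{d4} we have $\kappa\in(1/C^2,1)$, so that, conditionally on $(X_n,Y_n,Z_n)=(x,y',z)$ and since $\alpha_n$ is uniform on $[0,1]$ and independent of $\omega$ under $\tilde P_y$, the $n$-th cycle is accepted (i.e.\ $\alpha_n<\kappa$) with probability exactly $\kappa(x,y',z)$.

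Next I would analyze a single cycle. Writing $w$ for the position at time $S_{n-1}$, the strong Markov property gives that $Y_n=\omega(R_n)$ is distributed as $\beta_w^F$ (with $R_n=S_{n-1}$ and $\beta_w^F=\delta_w$ in case $w\in F$) and that, conditionally on $Y_n=y'\in F_x$, the exit position $Z_n=\omega(S_n)$ is distributed as $\varepsilon_{y'}^{V_x}$. Using the identity above, the sub-probability measure describing the position $Z_n$ on the event that the cycle is accepted is
\[
  \frac1C\sum_{x\in X}\int_{F_x}\beta_w^F(dy')\,\varepsilon_x^{V_x},
\]
while on the complementary event the factor $1-\kappa$ turns $\varepsilon_{y'}^{V_x}(dz)$ into $\varepsilon_{y'}^{V_x}(dz)-\frac1C\varepsilon_x^{V_x}(dz)$, so that the rejected continuation is distributed as
\[
  \sum_{x\in X}\int_{F_x}\beta_w^F(dy')\,\bigl(\varepsilon_{y'}^{V_x}-\frac1C\varepsilon_x^{V_x}\bigr).
\]
Comparing with \eqref{lsmm}, these are precisely $\sum_x(\delta_w'')(x)\,\varepsilon_x^{V_x}$ and $\delta_w'$.

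I would then induct on the cycle index, starting from the fact that the position entering cycle $1$ is distributed as $\mu_{y,0}$ by \eqref{mm2} (for $y\in X$ this is the content of the initial exit step $S_0$ from $V_y$). Assuming the position entering cycle $n$ is distributed as the sub-probability measure $\mu_{y,n-1}$, integrating the one-cycle formulas against $\mu_{y,n-1}$ and using the strong Markov property at $S_{n-1}$ together with the independence of $\alpha_n$ shows, via \eqref{lsmm} and \eqref{mmn}, that the contribution to the law at time $T_1$ coming from $\{N_1=n\}$ equals $\sum_x\tau_{y,n}(x)\,\varepsilon_x^{V_x}$, while the rejected continuation is distributed as $\mu_{y,n}=(\mu_{y,n-1})'$; this closes the induction. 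Since $\mu_{y,n}(M)\le(1-1/C^2)^n\to0$ (see the mass estimate in \cref{lsm2}), we have $N_1<\infty$ almost surely, and summing over $n\ge1$ yields the law of $\tilde P_y$ at $T_1$ as $\sum_x\bigl(\sum_{n\ge1}\tau_{y,n}(x)\bigr)\varepsilon_x^{V_x}=\sum_x\mu_y(x)\,\varepsilon_x^{V_x}$ by \eqref{lsm}. The \lq in particular\rq\ assertion follows upon taking total masses: $\tilde P_y[N_1=n,\,X_n=x]=\tau_{y,n}(x)$, hence $\tilde P_y[X_{N_1}=x]=\mu_y(x)$.

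The step I expect to require the most care is the measure-theoretic bookkeeping behind the induction: justifying, via the strong Markov property at each $S_{n-1}$, that after a rejection the whole scheme restarts afresh from $Z_{n-1}$ with a new independent uniform $\alpha_n$, and distinguishing throughout between the probability and sub-probability measures involved, so that the split into \lq accept now\rq\ and \lq reject and continue\rq\ reproduces the recursion \eqref{mmn} exactly. Once this is set up, the remaining computations are direct substitutions of the definitions \eqref{nkk}, \eqref{lsmm}, and \eqref{mm2}.
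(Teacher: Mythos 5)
Your proposal is correct and takes essentially the same approach as the paper's proof: both read the construction as an acceptance--rejection scheme, use the identity $\kappa(x,y',z)\,\ve_{y'}^{V_x}(dz)=\frac1C\,\ve_x^{V_x}(dz)$ to identify the accepted and rejected parts of each cycle with the operations $(\cdot)''$ and $(\cdot)'$ of \eqref{lsmm}, and then induct on the cycle index via the strong Markov property, summing at the end using the geometric decay of the mass of $\mu_{y,n}$. The only cosmetic difference is that the paper phrases the acceptance probability in terms of paths losing a fraction $\kappa_n$ of their remaining mass at each hitting time, which is exactly your conditional acceptance probability given $(X_n,Y_n,Z_n)$.
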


\begin{proof}
View $\omega\in\Omega$ as having mass one initially and that it loses
\begin{align*}
  \kappa_n = \kappa(X_n(\omega),Y_n(\omega),Z_n(\omega))
\end{align*}
times its remaining mass to the corresponding points $x\in X$
when entering there domains $F_x$ at the hitting times $R_n$. 

By the strong Markov property of the process,
the distribution of $P_y$ at time $S_1$ is given by
\begin{align*}
	\sum_{x\in X} \int_{F_x} \beta_y^F(du) \ve_{u}^{V_{x}}
	\quad\text{and}\quad
	\sum_{x\in X} \int_{F_x} \beta_{\ve_y}^F(du) \ve_{u}^{V_{x}}
\end{align*}
for $y\in M\setminus X$ and $y\in X$, respectively,
where $\ve_y$ stands for $\ve(y,V_y)$ in the latter case.
Now for each $\omega\in\Omega$, exactly a part $\kappa(X_1(\omega),Y_1(\omega),Z_1(\omega))$
of $(\omega,\alpha)\in\tilde\Omega$ satisfy $N_1(\omega,\alpha)=1$. 
The distribution of the part $N_1=1$ of $\tilde\Omega$ at time $S_1$ is therefore
\begin{align*}
\left.
\begin{aligned}
	\frac1C\sum_{x\in X} \int_{F_x} \beta_y^F(du)\frac{d\ve_x^{V_x}}{d\ve_u^{V_x}} \ve_u^{V_x}
	&= \frac1C\sum_{x\in X} \beta_y^F(F_x) \ve_x^{V_x} \\
	\frac1C\sum_{x\in X} \int_{F_x} \beta_{\ve_y}^F(du)\frac{d\ve_x^{V_x}}{d\ve_u^{V_x}} \ve_u^{V_x}
	&= 	\frac1C\sum_{x\in X} \beta_{\ve_y}^F(F_x) \ve_x^{V_x}
\end{aligned}
	\right\} = \sum_{x\in X}\tau_{y,1}(x)\ve_x^{V_x}
\end{align*}
for $y\in M\setminus X$ and $y\in X$, respectively.
The distribution of the remaining part $N_1>1$ of $\tilde\Omega$ at time $S_1$
is then given by $\mu_{y,1}=(\mu_{y,0})'$.
Using the strong Markov property, we obtain recursively that the distribution of the parts $N_1\le k$
and $N_1>k$ of $\tilde\Omega$ at time $S_k$ are given by
\begin{align*}
	\sum_{x\in X}\sum_{1\le j\le k}\tau_{y,j}(x)\ve_x^{V_x}
	\quad\text{and}\quad
	(\mu_{y,n})',
\end{align*}
respectively.
\end{proof}

The discussion in \cite[p.\,321]{LS} applies in our setup as well (see also \cite[Theorem 2.3]{BL2} and \cite[Proposition 4]{Ka2})
and gives the following result.

\begin{thm}\label{lsmp}
The process $(X_{N_k})_{k\ge0}$ is a Markov process with time homogeneous transition probabilities $\mu_y(x)$, where $x\in X$ and $y\in M$.
More precisely,
\begin{align*}
	\tilde P_y[X_{N_1}=x_1,\dots,X_{N_k}=x_k]
	= \mu_y(x_1)\mu_{x_1}(x_2)\dots\mu_{x_{k-1}}(x_k).
\end{align*}
\end{thm}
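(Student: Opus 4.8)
The plan is to prove the product formula by induction on $k$, using the strong Markov property to peel off the first factor and the consistency relation \cref{lsm2}.\ref{lsm2c} to reassemble the remaining factors. The base case $k=1$ is precisely \cref{lsm4}, which gives both $\tilde P_y[X_{N_1}=x]=\mu_y(x)$ and the finer information that the distribution of $\tilde P_y$ at time $T_1=S_{N_1}$ equals $\sum_{x\in X}\mu_y(x)\ve_x^{V_x}$. It is this finer statement that drives the induction: conditioned on $X_{N_1}=x_1$, the position $\omega(T_1)=Z_{N_1}$ lies on $\partial V_{x_1}$ and is distributed according to $\ve_{x_1}^{V_{x_1}}$.

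For the inductive step I would apply the strong Markov property at the stopping time $T_1$. First one verifies the renewal-compatibility of the whole construction: since $Z_{N_1}\in\partial V_{x_1}$ avoids $F$ (because $F\cap\partial V_x=\emptyset$ by \ref{d1} and \ref{d2}), the post-$T_1$ stopping times $R_n,S_n$ for $n>N_1$ and the selected points $X_n$ are exactly those produced by the construction applied to the shifted path $\theta_{T_1}\omega$; likewise, because $\alpha_1,\dots,\alpha_{N_1}$ are the only coordinates consulted up to time $T_1$ and the $\alpha_n$ are i.i.d.\ under $\lambda^\N$, the tail $(\alpha_{N_1+1},\alpha_{N_1+2},\dots)$ is again an independent i.i.d.\ sequence. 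This yields the identity $X_{N_{j+1}}(\omega,\alpha)=X_{N_j}(\theta_{T_1}\omega,\theta_{N_1}\alpha)$, expressing that the $(j+1)$-st selected point of the original process is the $j$-th selected point of the restarted one. The strong Markov property then gives
\begin{align*}
	\tilde P_y[X_{N_1}=x_1,\dots,X_{N_{k+1}}=x_{k+1}]
	= \mu_y(x_1)\int_{\partial V_{x_1}}\ve_{x_1}^{V_{x_1}}(dz)\,
	\tilde P_z[X_{N_1}=x_2,\dots,X_{N_k}=x_{k+1}].
\end{align*}
Applying the inductive hypothesis to the inner probability produces $\mu_z(x_2)\mu_{x_2}(x_3)\cdots\mu_{x_k}(x_{k+1})$, and then \cref{lsm2}.\ref{lsm2c} collapses $\int_{\partial V_{x_1}}\ve_{x_1}^{V_{x_1}}(dz)\,\mu_z(x_2)$ to $\mu_{x_1}(x_2)$, completing the induction.

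The main obstacle is the bookkeeping in the strong Markov step on the product space $\tilde\Omega=\Omega\times[0,1]^\N$: one must treat $T_1=S_{N_1}$ as a stopping time of the joint filtration generated by the path and the consulted coordinates $\alpha_1,\dots,\alpha_{N_1}$, and check that restarting simultaneously shifts the path by $\theta_{T_1}$ and the $\alpha$-sequence by the random index $N_1$, leaving a fresh independent copy of the system. Once this renewal structure is in place, everything reduces to combining \cref{lsm4} (first factor), the inductive hypothesis (tail), and \cref{lsm2}.\ref{lsm2c} (reassembly); this is exactly the reasoning carried out in the Brownian setting in \cite[p.\,321]{LS}, \cite[Theorem 2.3]{BL2}, and \cite[Proposition 4]{Ka2}.
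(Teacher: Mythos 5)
Your proof is correct, and it rests on the same two pillars as the paper's --- \cref{lsm4} in its joint form (the distribution of the part $\{X_{N_1}=x_1\}$ of $\tilde\Omega$ at time $T_1$ is $\mu_y(x_1)\ve_{x_1}^{V_{x_1}}$) and the strong Markov/renewal property at the times $T_j$ --- but your induction runs in the opposite direction, and this changes which auxiliary fact closes it. The paper strengthens the induction hypothesis: the distribution of $\tilde P_y$ restricted to $\{X_{N_1}=x_1,\dots,X_{N_k}=x_k\}$ at time $T_k$ is $\mu_y(x_1)\cdots\mu_{x_{k-1}}(x_k)\ve_{x_k}^{V_{x_k}}$; its inductive step applies the strong Markov property at the \emph{last} renewal time $T_{k-1}$ and then runs \cref{lsm4} once more with initial distribution $\ve(x_{k-1},V_{x_{k-1}})$, which is admissible precisely because this is the second alternative in \eqref{mm2}. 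No consistency relation is needed, since the strengthened hypothesis transports the exit measure along. You instead keep the bare product formula as the induction statement, apply the strong Markov property at the \emph{first} renewal time $T_1$, recurse on the tail started at $z\in\partial V_{x_1}$, and then must invoke \cref{lsm2}.\ref{lsm2c} to collapse $\int_{\partial V_{x_1}}\ve_{x_1}^{V_{x_1}}(dz)\,\mu_z(x_2)$ to $\mu_{x_1}(x_2)$. The two reassembly devices encode the same fact about \eqref{mm2}, so the proofs are of equal depth; yours buys a cleaner induction statement, the paper's buys self-containedness modulo \cref{lsm4}. Note that both versions need \cref{lsm4} in the joint (restricted-to-the-event) form rather than its marginal statement, since for general $LS$-data the sets $\partial V_x$ need not be disjoint; this form is what the proof of \cref{lsm4} actually establishes, as you correctly use.

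One small correction to your renewal step: $F\cap\partial V_{x}=\emptyset$ does \emph{not} follow from \ref{d1} and \ref{d2} --- those give $F_{x}\cap\partial V_{x}=\emptyset$ (as $F_x\subseteq V_x$ with $V_x$ open) and $F_{x'}\cap V_{x}=\emptyset$ for $x'\ne x$, but a set $F_{x'}$ may still touch the boundary $\partial V_x$ from outside. Fortunately your argument does not need this claim. What it needs is, first, $X\cap\partial V_{x_1}=\emptyset$ (true: if $x'\in X\cap\partial V_{x_1}$, then the neighborhood $\mathring{F}_{x'}$ of $x'$ would meet $V_{x_1}$, contradicting \ref{d2}), so that the restarted path starts outside $X$ and the construction applies to it with initial measure $\delta_z$; and, second, the identification $R_{N_1+n}(\omega)=T_1+R_n(\theta_{T_1}\omega)$ and $S_{N_1+n}(\omega)=T_1+S_n(\theta_{T_1}\omega)$, which holds even if $\omega(T_1)$ happens to lie in $F$, since in that case both sides register an immediate hit of $F$ at time zero after $T_1$.
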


\begin{proof}
We show that the distribution of $\tilde P_y$ restricted to $\{X_{N_1}=x_1,\dots,X_{N_k}=x_k\}$
at time $T_k$ is given by
\begin{align*}
	\mu_y(x_1)\mu_{x_1}(x_2)\dots\mu_{x_{k-1}}(x_k)\ve_{x_k}^{V_{x_k}}.
\end{align*}
For $k=1$, this is \cref{lsm4}.
Assume recursively that the assertion holds for all $j\le k-1$ for some $k\ge2$.
Then the distribution of $\tilde P_y$ of the part $\{X_{N_1}=x_1,\dots,X_{N_{k-1}}=x_{k-1}\}$
of $\tilde\Omega$ at time $T_{n-1}$ is given by
\begin{align*}
	\sum \mu_y(x_1)\mu_{x_1}(x_2)\dots\mu_{x_{k-2}}(x_{k-1})\ve_{x_{k-1}}^{V_{x_{k-1}}}.	 
\end{align*}
Now $x_{k-1}\in X$, and hence the second alternative in the definition of the LS-measures applies.
That is, $(X_{N_1})$ with initial distribution $\ve(x_{k-1},V_{x_{k-1}})$ has distribution at time $T_1$
as given in \cref{lsm4} (with $y$ substituted by $x_{k-1}$).
Now using the strong Markov property, we arrive at the asserted equality.
\end{proof}

\begin{thm}\label{lspo}
$LS$-path discretization induces an isomorphism
\begin{align*}
	\mathcal{P}(X,\mu)\to\mathcal{P}(M,L)
\end{align*}
of Poisson boundaries.
\end{thm}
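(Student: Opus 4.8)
The plan is to exhibit an explicit map between the two path spaces that carries the Poisson boundary structure of one to that of the other, and then to check it induces an isomorphism of the stationary $\sigma$-algebras modulo null sets. The key object produced by the earlier results is the discretization map $\Phi\colon\tilde\Omega\to X^{\N_0}$ sending $(\omega,\alpha)\mapsto(X_{N_0},X_{N_1},X_{N_2},\dots)$. By \cref{lsmp}, the pushforward of $\tilde P_y$ under $\Phi$ is precisely the $\mu$-random walk measure $P_y^{\mu}$ started from the $\mu$-distribution at $y$; for $y=x\in X$ this is the law of the $\mu$-random walk started at $x$. Thus $\Phi$ intertwines the diffusion dynamics on $M$ with the $\mu$-random walk on $X$, and the first thing I would record is that $\Phi$ is measurable and equivariant for the relevant shift/stationarity structures.

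\smallskip\noindent\textbf{From path spaces to boundaries.}
Recall from \cref{subpoi} that the Poisson boundary $\mathcal{P}(X,\mu)$ is $(X^{\N_0},\mathcal{S})$ with the stationary (shift-invariant mod zero) $\sigma$-algebra, and that $\mathcal{P}(M,L)$ is defined analogously via the tail/invariant structure of the diffusion paths in $\Omega$. The core of the proof is to show that pulling back along $\Phi$,
\begin{align*}
	\Phi^*\colon L^\infty(X^{\N_0},\mathcal{S}) \to L^\infty(\tilde\Omega,\mathcal{S}_{\tilde\Omega}),
\end{align*}
is an isomorphism of the relevant invariant function algebras, and then to relate the invariant structure of $\tilde\Omega$ back to that of $\Omega$ itself (the factor $[0,1]^\N$ carries only the independent coin-flips used in the construction and contributes nothing to the invariant $\sigma$-algebra). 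Concretely, I would argue that every bounded $\mu$-harmonic function $f$ on $X$ pulls back to a bounded $L$-harmonic function on $M$ via $h(y)=\mu_y(f)$, and conversely every bounded $L$-harmonic function restricts to $X$ as a bounded $\mu$-harmonic function; this correspondence is exactly the isomorphism $\mathcal{H}^\infty(M,L)\cong\mathcal{H}^\infty(X,\mu)$ of \cref{harmp}. Since both Poisson boundaries are characterized (via the Poisson formulas \eqref{poisson1} and the analogue of \eqref{poisson2}) as the spaces representing these bounded harmonic functions, the isomorphism of harmonic-function spaces upgrades to an isomorphism of the boundaries.

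\smallskip\noindent\textbf{Matching the two representations.}
The bridge between the function-theoretic statement \cref{harmp} and the measure-theoretic statement about $\mathcal{S}$ is the almost-sure convergence of the bounded martingale $(f\circ X_{N_k})_{k}$ along discretized paths: for $\tilde P_y$-almost every $(\omega,\alpha)$, the limit $f_\infty=\lim_k f(X_{N_k})$ exists, is $\mathcal{S}$-measurable, and recovers $f$ through integration against $\tilde P_y$. Simultaneously, because $X_{N_k}(\omega,\alpha)$ tracks the continuous path $\omega$ through its successive hits of $F$, the same limit can be read off from the diffusion path $\omega$ itself, so $f_\infty$ descends to an invariant function on $\Omega$. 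I would make this precise by showing that stationary-equivalent points of $X^{\N_0}$ lift, under $\Phi$, to points of $\tilde\Omega$ whose underlying diffusion paths have the same tail behavior, and vice versa, so that $\Phi^{-1}(\mathcal{S})$ agrees with the invariant $\sigma$-algebra of $\Omega$ modulo $\tilde P_y$-null sets.

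\smallskip\noindent\textbf{Main obstacle.}
The hard part will be the last matching step: verifying that $\Phi^*$ is not merely injective but surjective onto the invariant functions of the diffusion, i.e.\ that \emph{no} tail information of the continuous paths is lost by sampling only at the discrete hitting times $R_{N_k}$. Injectivity and the intertwining of dynamics follow fairly directly from \cref{lsmp} and the strong Markov property, but surjectivity requires knowing that the discretized sequence $(X_{N_k})$ determines the asymptotic behavior of $\omega$ up to a $\tilde P_y$-null set — equivalently, that every bounded $L$-harmonic function is already swept by $F$ and hence captured by its restriction to $X$. This is precisely where I would lean on \cref{swel}.\ref{swelb}, which guarantees that bounded $L$-harmonic functions are swept by the recurrent set $F$, so that the correspondence of \cref{harmp} is genuinely onto and the two invariant algebras coincide. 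Granting these inputs, the asserted isomorphism $\mathcal{P}(X,\mu)\to\mathcal{P}(M,L)$ follows.
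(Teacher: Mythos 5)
Your proposal is correct and follows essentially the same route as the paper: both construct the measurable boundary map from the $LS$-path discretization (disposing of the auxiliary coin-flip factor $[0,1]^\N$ by independence of the $\alpha_n$, exactly as the paper does following Kaimanovich), and both then invoke the isomorphism $\mathcal{H}^\infty(M,L)\cong\mathcal{H}^\infty(X,\mu)$ of \cref{harmp} to upgrade this measurable map to an isomorphism of Poisson boundaries. The paper merely packages the discretization as an explicit chain of four measure-preserving maps between Markov processes, but the logical content --- including your observation that sweeping of bounded $L$-harmonic functions (\cref{swel}) is what makes the correspondence onto --- is the same.
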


We follow the arguments in the proof of the corresponding \cite[Theorem 1]{Ka2}.

\begin{proof}[Proof of \cref{lspo}]
As in \cite[Theorem 1]{Ka2},
we decompose $LS$-path discretization into the following four measure preserving maps between Markov processes,
\begin{align*}
	\Omega\ni\omega &\xrightarrow[1]{}
	(\omega(R_n(\omega)),\omega(S_n(\omega)))  \xleftarrow[2]{}
	(\omega(R_n(\omega)),\omega(S_n(\omega)),\alpha_n)  \xrightarrow[3]{} \\
	&\xrightarrow[3]{}
	(\omega(R_{N_k}(\omega)),\omega(S_{N_k}(\omega)),\alpha_{N_k})  \xrightarrow[4]{}
	(X_{N_k(\omega)}(\omega)) \in M\times X^\N.
\end{align*}
Since the $\alpha_n$ are i.i.d. and independent of $\omega$,
the second map induces an isomorphism of the corresponding Poisson boundaries.
Thus we obtain a measurable map $\mathcal{P}(M,L)\to\mathcal{P}(X,\mu)$. 
By \cref{harmp},
the spaces of bounded harmonic functions on $M$ and $X$ are isomorphic to each other under this map.
Hence it is an isomorphism of Poisson boundaries.
\end{proof}

The difference to the argument in \cite{Ka2} is that we can use the isomorphism $\mathcal{H}^\infty(M,L)\to\mathcal{H}^\infty(X,\mu)$ from \cref{harmp},
whereas in \cite{Ka2}, the aim of the proof is to establish the latter.

\subsection{Green functions and Martin boundary}
\label{submar}

For $x\in X$ and $y\in M$, we let
\begin{equation}\label{green1}
  g(y,x) = \delta_y(x) + \sum_{k\ge1} \tilde P_y(X_{N_k}=x),
\end{equation}
the \emph{Green function} associated to the Markov chain $(X_{N_k})_{k\ge1}$.
Recall that $g(x,y)<\infty$ for some, or any, pair $x\ne y$ of points in $X$
if and only if the $\mu$-random walk on $X$ is transient.

\begin{prop}\label{green2}
For all $x\in X$ and $y\in M\setminus V_x$, we have
\begin{equation*}
  g(y,x) = \frac{1}{C} \sum_{n=1}^{\infty} \nu_{y,n}(F_{x}),
\end{equation*}
where $\nu_{y,n}$ denotes the distribution of $P_y$ w.r.t. $\omega(R_{n}(\omega))$, that is,
\begin{equation*}
  \nu_{y,n}(A) = P_y[\omega(R_n(\omega))\in A].
\end{equation*}
\end{prop}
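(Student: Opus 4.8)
The plan is to expand the definition \eqref{green1} of $g(y,x)$ and to reinterpret $\sum_{k\ge1}\tilde P_y(X_{N_k}=x)$ as an expected number of \emph{successful selections} at $x$. First I would dispose of the term $\delta_y(x)$: since $x\in\mathring F_x\subseteq V_x$ by \ref{d1} while $y\in M\setminus V_x$, we have $y\ne x$ and hence $\delta_y(x)=0$. Next, observe that the $F_x$ are pairwise disjoint (by \ref{d2} together with $F_x\subseteq V_x$), so that $\omega(R_n)\in F_x$ if and only if $X_n=x$; thus $\nu_{y,n}(F_x)=P_y[X_n=x]$, and it remains to prove that $\sum_{k\ge1}\tilde P_y(X_{N_k}=x)=\tfrac1C\sum_{n\ge1}P_y[X_n=x]$.

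The key reformulation is that, by \eqref{nk}, the indices $N_1<N_2<\cdots$ enumerate exactly those steps $n$ for which $\alpha_n<\kappa_n$, where $\kappa_n=\kappa(X_n,Y_n,Z_n)$. Consequently the random variable $\#\{k\ge1\mid X_{N_k}=x\}$ coincides with $\#\{n\ge1\mid \alpha_n<\kappa_n,\ X_n=x\}$, and taking $\tilde P_y$-expectations (all terms being nonnegative, by Tonelli) gives
\begin{align*}
	\sum_{k\ge1}\tilde P_y(X_{N_k}=x)=\sum_{n\ge1}\tilde P_y(\alpha_n<\kappa_n,\ X_n=x).
\end{align*}
Since $\tilde P_y=P_y\otimes\lambda^\N$ and $0<\kappa_n<1$ (the latter from \ref{d4}), conditioning on $\omega$ and integrating out the independent uniform variable $\alpha_n$ turns each summand into $E_{P_y}[\mathbf 1_{\{X_n=x\}}\kappa_n]$.

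It remains to evaluate $E_{P_y}[\mathbf 1_{\{X_n=x\}}\kappa_n]$, and this is where the normalization by $1/C$ and the exit-distribution identity enter. On $\{X_n=x\}$ we have $Y_n\in F_x$, $Z_n=\omega(S_n)\in\partial V_x$, and $\kappa_n=\tfrac1C\,\tfrac{d\ve(x,V_x)}{d\ve(Y_n,V_x)}(Z_n)$ by \eqref{nkk}. I would condition on $\mathcal F_{R_n}$: the indicator $\mathbf 1_{\{X_n=x\}}$ and the point $Y_n$ are $\mathcal F_{R_n}$-measurable, while by the strong Markov property the exit point $Z_n$ from $V_x$ has conditional law $\ve(Y_n,V_x)$. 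Hence
\begin{align*}
	E_{P_y}[\kappa_n\mid\mathcal F_{R_n}]\,\mathbf 1_{\{X_n=x\}}
	=\frac1C\int_{\partial V_x}\frac{d\ve(x,V_x)}{d\ve(Y_n,V_x)}(z)\,\ve(Y_n,V_x)(dz)\,\mathbf 1_{\{X_n=x\}}
	=\frac1C\,\mathbf 1_{\{X_n=x\}},
\end{align*}
because the integral is just the total mass $\ve(x,V_x)(\partial V_x)=1$ of the exit probability. Taking expectations yields $E_{P_y}[\mathbf 1_{\{X_n=x\}}\kappa_n]=\tfrac1C P_y[X_n=x]=\tfrac1C\nu_{y,n}(F_x)$, and summing over $n$ completes the proof.

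I expect the main obstacle to be the bookkeeping in the conditioning step: one must verify that $Y_n$ (hence the reference measure $\ve(Y_n,V_x)$ and the Radon--Nikodym derivative) and $\mathbf 1_{\{X_n=x\}}$ are $\mathcal F_{R_n}$-measurable, and that the mutual absolute continuity guaranteed by \ref{d4} makes the quotient $d\ve(x,V_x)/d\ve(Y_n,V_x)$ well defined on $\partial V_x$ for every $Y_n\in F_x$. Granting these, the integral collapses to $1$ and the factor $1/C$ survives exactly as claimed.
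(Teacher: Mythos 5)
Your proof is correct, and it takes the route the paper intends: the paper gives no argument of its own but defers to \cite[Equation 2.5]{BL2}, and your computation---rewriting $\sum_{k\ge1}\tilde P_y(X_{N_k}=x)$ as the expected number of successes $\sum_{n\ge1}\tilde P_y(\alpha_n<\kappa_n,\,X_n=x)$ via Tonelli, and then collapsing $E_{P_y}[\kappa_n\mid\mathcal F_{R_n}]$ to $1/C$ by the strong Markov property at $R_n$ together with the identity $\int_{\partial V_x}\frac{d\ve(x,V_x)}{d\ve(Y_n,V_x)}(z)\,\ve(Y_n,V_x)(dz)=1$---is exactly the mass-transfer mechanism the paper itself uses in the proof of \cref{lsm4}. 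Your attention to the hypotheses is also right: $y\notin V_x$ is used only to kill $\delta_y(x)$, while \ref{d4} is what guarantees both $\kappa_n\le1$ (so the uniform variable integrates out to $\kappa_n$) and the mutual absolute continuity of the exit measures making the Radon--Nikodym quotient well defined.
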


In the case of Brownian motion, \cref{green2} is Equation 2.5 of \cite{BL2}.
Mutatis mutandis, the proof in \cite{BL2} carries over to our setting.

The \emph{Green function} $G$ of the diffusion process $(D_t)$ is given by
\begin{equation}\label{Green1}
	G(x,y) = \int_{0}^{\infty} p(t,x,y)\,dt.
\end{equation}
Keeping in mind that $p(t,x,y)$ is the transition density with respect to the volume element $\vf^2\dv$,
the Green function measures the average time of sojourn of a path in subsets of $M$ in the sense that,
for any Borel subset $B \subseteq M$, we have
\begin{equation}\label{Green2}
    \int_{B} G(x,y) \vf^2(y)\dv(y) = E_{x}\left(\int_{0}^{\infty} \chi_{B}(\omega(t))\,dt\right)
\end{equation}
Recall that $G(x,y)<\infty$ for some, or any, pair $x\ne y$ of points in $M$
if and only if the diffusion process $(D_t)$ on $M$ associated to $L$ is transient.
Then $G$ is smooth on the set of pairs $(x,y)$ with $x\ne y$.

More generally, for any open subset $V$ of $M$, we let $G_V$ be the Green function of $V$.
It is obtained by stopping the diffusion process $(D_t)$ upon leaving $V$.

\begin{prop}\label{Green4}
For all $x\in X$ and $y\in M\setminus V_x$, we have
\begin{equation*}
  G(y,x) = \sum_{n = 1}^{\infty} \int_{\partial F_{x}} G_{V_x}(z,x)\nu_{y,n}(dz).
\end{equation*}
\end{prop}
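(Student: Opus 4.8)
The plan is to exploit the occupation-time interpretation \eqref{Green2} of the Green function and to decompose the sojourn of the diffusion near $x$ according to its successive excursions into and out of $V_x$. Recall from \eqref{Green2} (applied with starting point $y$) that, for any Borel set $B\subseteq M$,
\[
  \int_B G(y,\cdot)\,\vf^2\dv = E_y\Big(\int_0^\infty \chi_B(\omega(t))\,dt\Big),
\]
and likewise, stopping the process upon leaving $V_x$,
\[
  \int_B G_{V_x}(z,\cdot)\,\vf^2\dv = E_z\Big(\int_0^{S^{V_x}} \chi_B(\omega(t))\,dt\Big).
\]
I would fix a Borel set $B\subseteq\mathring F_x$ and compute the first expectation.

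The geometric heart of the argument is the following separation statement, which uses the defining properties \ref{d1} and \ref{d2} of the $LS$-data: whenever $\omega(t)\in\mathring F_x$ one has $t\in[R_n,S_n]$ for some $n$ with $X_n=x$. Indeed, $\mathring F_x\subseteq F$ rules out $t\in(S_{n-1},R_n)$, on which $\omega$ avoids $F$; and on $[R_n,S_n)$ the path lies in $V_{X_n}$, so $\omega(t)\in F_x\cap V_{X_n}$ forces $X_n=x$ by \ref{d2}. Since $R_n\le S_n\le R_{n+1}$, the intervals $[R_n,S_n]$ overlap only at endpoints, whence
\[
  \int_0^\infty \chi_B(\omega(t))\,dt
  = \sum_{n\ge1}\int_{R_n}^{S_n}\chi_B(\omega(t))\,dt,
\]
with the $n$-th summand vanishing unless $X_n=x$. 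The hypothesis $y\in M\setminus V_x$ guarantees that there is no contribution before $R_1$, matching the absence of an $n=0$ term in the asserted formula.

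Next I would apply the strong Markov property at the stopping time $R_n$. Conditionally on $\omega(R_n)=z$, the time-shifted path $(\omega(R_n+s))_{s\ge0}$ run until it leaves $V_x$ is a copy of the $V_x$-stopped diffusion started at $z$, and on $\{X_n=x\}$ its exit time is exactly $S_n$; hence
\[
  E_y\Big(\int_{R_n}^{S_n}\chi_B(\omega(t))\,dt\ \Big|\ \mathcal F_{R_n}\Big)
  = \int_B G_{V_x}(\omega(R_n),\cdot)\,\vf^2\dv
\]
on that event. Taking expectations, summing over $n$ by monotone convergence, and writing the law of $\omega(R_n)$ as $\nu_{y,n}$ yields
\[
  \int_B G(y,\cdot)\,\vf^2\dv
  = \int_B\Big(\sum_{n\ge1}\int_{\partial F_x} G_{V_x}(z,\cdot)\,\nu_{y,n}(dz)\Big)\vf^2\dv ,
\]
where the inner integral may be restricted to $\partial F_x$: for $z\notin V_x$ (in particular for $z\in F_{x'}$ with $x'\ne x$, by \ref{d2}) the exit time $S^{V_x}$ vanishes and $G_{V_x}(z,\cdot)=0$, while first entrance into $F_x$ occurs on $\partial F_x$. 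Since $B\subseteq\mathring F_x$ is arbitrary, the two integrands agree $\vf^2\dv$-almost everywhere on $\mathring F_x$; as both are continuous in the free variable near $x$ (the Green functions being smooth off the diagonal, with $z\ne x$ and $y\ne x$), they agree at $x$, which is the claim.

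The step I expect to be the main obstacle is making the excursion decomposition and the conditional strong Markov computation fully rigorous: one must verify that the post-$R_n$ segment stopped at $S_n$ really is the $V_x$-stopped process, so that re-entries of $F_x$ within a single $V_x$-excursion are not miscounted as new hitting times, and one must justify passing from the $\vf^2\dv$-a.e.\ identity on $\mathring F_x$ to equality at the single point $x$, which requires enough uniform control of the nonnegative series to transfer continuity to the limit. As with \cref{green2}, the argument parallels the Brownian-motion case in \cite{BL2}, and the same bookkeeping carries over.
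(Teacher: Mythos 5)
Your proposal is correct and is essentially the paper's own argument: the paper offers no details for this proposition, deferring to the Brownian-motion case (Equation 2.6 of \cite{BL2}) with the remark that the proof carries over mutatis mutandis, and that proof is exactly your decomposition of the occupation time of $\mathring{F}_x$ into the excursions $[R_n,S_n]$ with $X_n=x$ followed by the strong Markov property at $R_n$. The one step you flag as delicate---upgrading the $\vf^2\dv$-a.e.\ identity on $\mathring{F}_x$ to equality at the point $x$---closes by a standard remark: each summand $u\mapsto\int_{\partial F_x}G_{V_x}(z,u)\,\nu_{y,n}(dz)$ is a nonnegative $L$-harmonic function of $u$ on $\mathring{F}_x$ (since $z\in\partial F_x$ and $y$ lie outside), so by Harnack's convergence theorem the increasing partial sums converge locally uniformly to an $L$-harmonic, hence continuous, limit (finite because it agrees a.e.\ with $G(y,\cdot)$), while $G(y,\cdot)$ is likewise $L$-harmonic and continuous on $\mathring{F}_x$, so the a.e.\ equality holds everywhere, in particular at $x$.
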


In the case of Brownian motion, \cref{green2} is Equation 2.6 of \cite{BL2}.
Mutatis mutandis, the proof in \cite{BL2} carries over to our setting.

Following \cite{BL2}, we say that $LS$-data $(F_x,V_x)$ for $X$ are \emph{balanced}
if there is a constant $B$ such that
\begin{enumerate}
\item[(D5)]\label{balan}
$G_{V_x}(z,x) = B$ for all $x\in X$ and $z\in\partial F_x$.
\end{enumerate}
The following result extends \cite[Theorem 2.7]{BL2}.

\begin{thm}\label{Greengreen}
If $(F_x,V_x)$ are balanced $LS$-data for $X$ with constants $B$ and $C$, then
\begin{align*}
	G(y,x) = BCg(y,x)
	\quad\text{for all $x\in X$ and $y\in M\setminus V_x$}.
\end{align*}
In particular, the diffusion process $(D_t)$ is transient if and only if the $\mu$-random walk on $X$ is transient.
In this case, $\mu_{x}(y) = \mu_{y}(x)$ for all $x,y \in X$.
\end{thm}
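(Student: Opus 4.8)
The plan is to read off the identity $G(y,x)=BCg(y,x)$ by feeding the balancing condition into \cref{Green4} and matching the result against \cref{green2}. Concretely, for $x\in X$ and $y\in M\setminus V_x$, \cref{Green4} gives
\[
 G(y,x)=\sum_{n=1}^\infty\int_{\partial F_x}G_{V_x}(z,x)\,\nu_{y,n}(dz),
\]
and since the data are balanced, $G_{V_x}(z,x)=B$ for every $z\in\partial F_x$ (condition (D5)), so each integral collapses to $B\,\nu_{y,n}(\partial F_x)$. As paths are continuous and enter $F$ through its boundary, the hitting distribution $\nu_{y,n}$ is carried by $\partial F$, whence $\nu_{y,n}(\partial F_x)=\nu_{y,n}(F_x)$ and $G(y,x)=B\sum_n\nu_{y,n}(F_x)$. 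Comparing with \cref{green2}, which says $g(y,x)=\tfrac1C\sum_n\nu_{y,n}(F_x)$, yields $G(y,x)=BCg(y,x)$. Here $0<B<\infty$, being the Green function of the relatively compact domain $V_x$ evaluated at the distinct points $z\in\partial F_x$ and $x\in\mathring F_x$, and $C>1$, so the factor $BC$ is finite and positive.

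The transience dichotomy is then immediate. For distinct $x,y\in X$ we have $y\notin V_x$ by property (D2) of the $LS$-data, so the identity applies and, the constant $BC$ being finite and positive, $G(y,x)<\infty$ if and only if $g(y,x)<\infty$. By \cref{lsm2} the $\mu$-random walk is irreducible, its one-step probabilities being strictly positive, so finiteness of a single off-diagonal Green value is equivalent to transience of the walk; likewise $(D_t)$ is transient exactly when its Green function is finite off the diagonal. Hence $(D_t)$ is transient if and only if the $\mu$-random walk is transient.

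For the symmetry assertion I would exploit that the diffusion Green function is symmetric: by \eqref{Green1} and the symmetry of the heat kernel $p(t,x,y)$ with respect to $\vf^2\,\dv$, we have $G(x,y)=G(y,x)$. Applying the identity of the first step in both orders, for distinct $x,y\in X$,
\[
 BC\,g(x,y)=G(x,y)=G(y,x)=BC\,g(y,x),
\]
so $g(x,y)=g(y,x)$; together with the trivial symmetry on the diagonal, this shows the random-walk Green function $g$ is symmetric. In the transient case the recursions $g=\id+g\mu=\id+\mu g$, writing $\mu$ for the transition operator $(\mu f)(x)=\sum_y\mu_x(y)f(y)$, say exactly that $g$ is the two-sided inverse of $\id-\mu$. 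A symmetric Green operator therefore has symmetric inverse $\id-\mu$, whence $\mu$ is symmetric, i.e.\ $\mu_x(y)=\mu_y(x)$ for all $x,y\in X$.

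I expect the main obstacle to be this last step, turning symmetry of the Green function $g$ into symmetry of the one-step kernel $\mu$. On a countable state space the passage ``$g$ symmetric $\Rightarrow$ $\id-\mu$ symmetric'' requires justifying the inversion $g=(\id-\mu)^{-1}$ and the manipulation of the resulting infinite sums; this is where transience does the work, guaranteeing that $g$ is finite and that $\sum_n\mu^n$ converges, so that the identities $g(\id-\mu)=(\id-\mu)g=\id$ are legitimate on finitely supported functions and the inversion argument goes through. Everything else is a direct substitution into the two preceding propositions.
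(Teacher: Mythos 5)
Your proof is correct, and for the displayed identity and the transience equivalence it is exactly the paper's argument: the authors likewise obtain $G(y,x)=BCg(y,x)$ by feeding the balancing condition (D5) into \cref{Green4} and comparing with \cref{green2} (the boundary-versus-full-set bookkeeping for $\nu_{y,n}$ that you spell out is implicit in the compatibility of those two propositions), and they deduce the transience statement, as you do, from the characterization of transience by finiteness of $G$, respectively $g$, at some and then all pairs of distinct points. The one place where you genuinely diverge is the symmetry of $\mu$: the paper, after noting as you do that $G(x,y)=G(y,x)$ forces $g(x,y)=g(y,x)$ on $X$, disposes of the rest by declaring it to be \emph{exactly the same as in the case of Brownian motion} in \cite[Theorem 2.7]{BL2}, i.e.\ by citation; you instead supply a self-contained inversion argument. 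Your caveat about justifying that inversion is the right one, and transience does resolve it: writing $\mu$ for the transition matrix, one transposes $(\id-\mu)g=\id$ against the symmetry of $g$ to get that $\sum_z g(x,z)\bigl(\mu_z(y)-\mu_y(z)\bigr)=0$ for all $x,y$, and then left-multiplies by $\id-\mu$; the Fubini interchange needed for associativity of this triple product is legitimate because every double sum involved is dominated entrywise by quantities such as $(\mu g)(x,y)$ and $(g\mu)(x,y)$, which equal $g(x,y)-\delta_x(y)<\infty$ and have nonnegative terms. So your version buys self-containedness where the paper leans on \cite{BL2}, at the cost of this (entirely manageable) infinite-matrix bookkeeping.
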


\begin{proof}
The displayed formula follows immediately from \cref{green2} and \cref{Green4}.
For the assertion about the transience, we recall that $(D_t)$ is transient if and only if $G(x,y)<\infty$ for some--and then all--points $x\ne y$ in $M$
and that the $\mu$-random walk on $X$ is transient if and only if $g(x,y)<\infty$ for some--and then all--points $x,y$ in $X$.

Since $G(x,y) = G(y,x)$ for all $x,y\in X$, we conclude that $g(x,y)=g(y,x)$ for all $x,y\in X$.
The proof of the symmetry of $\mu$ is now exactly the same as in the case of Brownian motion in \cite[Theorem 2.7]{BL2}.
\end{proof}

Throughout the rest of this subsection, we assume that $M$ is $L$-transient, $X$ is $*$-recurrent, and $X$ is endowed with the family $\mu$ of LS-measures associated to balanced LS-data $(F_x,V_x)$.
Given a subset $S\subseteq M$, we denote by $\bar{S}$ its closure in the Martin compactification $\mathcal{M}(M,L)$ and set
\[
\partial_L S = \bar{S} \cap \partial_L M.
\]

\begin{lem}\label{muhar}
	\begin{enumerate}
		\item\label{heh}
		For any $h\in\mathcal H^+(X,\mu)$, $Eh$ is the unique positive $L$-harmonic function on $M$ extending $h$.
		\item\label{cont}
		The extension map $E \colon \mathcal{H}^+(X,\mu) \to \mathcal{H}^+_F(M,L)$ is continuous with respect to the compact-open topology.
		\item\label{xbx}
		For any $\xi\in\partial_\mu X\cap\mathcal H^+(X,\mu)$,
		there is a unique $E\xi\in\partial_L M$ such that $Ek(.,\xi)=K(.,E\xi)$.
		Moreover, $E\xi\in\partial_L X\cap\mathcal H^+_F(M,L)$.
	\end{enumerate}
\end{lem}

\begin{proof}
	\eqref{heh}
	If $f\in\mathcal H^+(M,L)$ satisfies $Rf=h$, then $R(Eh-f)=0$.
	Furthermore, by Theorem \ref{lsh3}, we have
	\begin{align*}
		Eh = ERf \le f
	\end{align*}
	and hence $f-Eh$ is a non-negative $L$-harmonic function vanishing on $X$.
	Therefore $f-Eh=0$ by the mean value property of $L$-harmonic functions.
	
	\eqref{cont} Let $(f_n)_{n \in \mathbb{N}}$ be a sequence in  $\mathcal{H}^+(X,\mu)$ converging to $f\in\mathcal{H}^+(X,\mu)$  pointwise.
	Consider the sequence of extensions $(Ef_n)_{n \in \mathbb{N}}$ in $\mathcal{H}^+_F(M,L)$ and choose $x_0 \in X$.
	Given any subsequence $(Ef_{n_k})_{k \in \mathbb{N}}$, since it is bounded at $x_0$, it follows from the gradient estimate that $(Ef_{n_k})_{k \in \mathbb{N}}$ is uniformly bounded and equicontinuous on any compact domain of $M$ containing $x_0$.
	We derive from the Arzela-Ascoli theorem that, after passing to a subsequence if necessary, we have that $Ef_{n_k} \rightarrow g \in C(M,\mathbb{R})$ locally uniformly.
	Using again that $E f_{n_k}$ is $L$-harmonic, we deduce that $g \in \mathcal{H}^+(M,L)$.
	Moreover, since $E f_{n_k} \rightarrow g$ locally uniformly,
	we readily see that $f_{n_k} = R E f_{n_k} \rightarrow Rg$ pointwise. This, together with the fact that $f_n \rightarrow f$ pointwise, gives that $f = Rg$, and hence, $g = Ef$. This means that any subsequence of $(E f_n)_{n \in \mathbb{N}}$ has a subsequence converging locally uniformly to $Ef$, which yields that $Ef_n \rightarrow Ef$ in $H^+_F(M,L)$. Since the space $H^+(X,\mu)$ is metrizable, this establishes the continuity of the extension map.
	
	\eqref{xbx}
	Let $(x_n)$ be a sequence in $X$ that converges to $\xi$ in $\mathcal M_\mu X$.
	Since $\mathcal M_L M$ is compact, $K(.,x_n)$ subconverges in $M$ to some $K(.,\eta)$.
	Since $k(.,x_n)=RK(.,x_n)$ on $X \setminus \{x_n\}$ for all $n$, we conclude that $k(.,\xi)=RK(.,\eta)$.
	Since $k(.,\xi)$ is $\mu$-harmonic and $K(.,\eta)$ is $L$-harmonic, the first part of the proof implies the assertions.
\end{proof}

\begin{thm}\label{bp330}
	Restriction $R\colon\partial_L X\to\partial_\mu X$ of Martin kernels from $M$ to $X$
	is continuous, closed, and surjective.
	Furthermore,
	\begin{align*}
		R\colon\partial_L X\cap\mathcal H^+_F(M,L) \to \partial_\mu X\cap\mathcal H^+(X,\mu)
	\end{align*}
	is a homeomorphism with inverse $E$.
\end{thm}

\begin{proof}
	Let $\xi\in\partial_L X$.
	Then there is a sequence $(x_n)$ in $X$ converging to $\xi$ in $\mathcal M_L M$.
	By definition, for any such sequence,
	the Martin kernels $K(.,x_n)$ converge pointwise to the Martin kernel $K(.,\xi)$ corresponding to $\xi$.
	Hence their restrictions $k(.,x_n)$ to $X$ converge pointwise as functions on $X$.
	By definition, this means that the sequence of $x_n$ converges to a point $\eta$ in the Martin boundary $\partial_\mu X$ of $X$,
	where the associated Martin kernel $k(.,\eta)$ is the restriction of $K(.,\xi)$ to $X$. This means that $R \colon \partial_L X \to \partial_\mu X$ is well defined. Continuity of $R$ is obvious, since limits correspond to pointwise convergence of Martin kernels. This yields that $R$ is closed, $\partial_L X$ being compact.

	Let $(x_n)$ be a sequence in $X$ which converges to a point $\xi$ in $\partial_\mu X$.
	Now $\mathcal M_L M$ is compact.
	Hence the sequence of $x_n$ subconverges to a point $\eta\in\partial_L M$.
	It follows that the restriction of the associated Martin kernel $K(.,\eta)$ to $X$ equals $k(.,\xi)$.
	Thus $R$ defines a surjection from $\partial_L X$ onto $\partial_\mu X$.
	
	By \cref{muhar}(\ref{xbx}), $R\colon\partial_L X\cap\mathcal H^+_F(M,L)\to\partial_\mu X\cap\mathcal H^+(X,\mu)$ is bijective.
	Since $R$ is continuous and closed, it is a homeomorphism.
\end{proof}

The spaces $\partial_{L}^{\min}M$ of minimal positive $L$-harmonic functions on $M$
and $\partial_{L,F}^{\min}M$ of them swept by $F$ are Borel subsets of $\partial_LM$.
For $h\in\mathcal H^+(M,L)$, recall the representation formula \eqref{poisson2}.
Analogously, the space $\partial_\mu^{\min}X$ of minimal positive $\mu$-harmonic functions on $X$ is a Borel subset of $\partial_\mu X$,
and there is a representation of positive $\mu$-harmonic functions on $X$ analogous to \eqref{poisson2}.

\begin{cor}\label{bl29}
	For any $h\in\mathcal H^+(M,L)$ and $\nu_h$ according to \eqref{poisson2},
	\begin{align*}
		h\in\mathcal H^+_F(M,L) \quad\text{if and only if}\quad \nu_h(\partial_{L,F}^{\min}M)=\nu_h(\partial_L^{\min}M).
	\end{align*}
\end{cor}

\begin{proof}
	We have $ERh\le h$ with equality if and only of $h\in\mathcal H^+_F(M,L)$.
	Hence $ERh=h$ if and only if $ERK(.,\xi)=K(.,\xi)$ for $\nu_h$-almost all $\xi$.
\end{proof}

\begin{thm}\label{mfx}
We have $\partial_{L,F}^{\min}M\subseteq\partial_LX\cap\mathcal H^+_F(M,L)$,
and restriction \[R\colon \partial_{L,F}^{\min}M \to \partial_\mu^{\min}X\] is a homeomorphism with inverse $E$.
\end{thm}

\begin{proof}
We start with the proof of the second assertion.
To that end, let $\xi\in\partial_{L,F}^{\min}M$ and $h=K(.,\xi)$.
Suppose that $Rh\ge f$ for some $f\in\mathcal H^+(X,\mu)$.
Then $h = ERh \ge Ef$, and hence $Ef$ is a multiple of $h$.
But then also $f=REf$ of $Rh$. This means that $R\xi \in \partial_\mu^{\min}X$. Injectivity of $R$ follows readily from Lemma \ref{muhar}(\ref{heh}). 	

To establish the surjectivity of $R$, let now $\xi \in \partial_\mu^{\min}X$ and $h=k(.,\xi)$.
We know from Theorem \ref{bp330} that $E\xi \in \partial_L M \cap\mathcal H^+_F(M,L)$,
and it suffices to show that $Eh$ is minimal. Assume to the contrary that this is not the case. Then there exist distinct $h_1, h_2 \in \mathcal{H}^+(M,L)$ and $0 < c < 1$ such that $E h = c h_1 + (1-c)h_2$. Using that $Eh$ is swept by $F$, it follows from Theorem \ref{lsh3} that $h_1$ and $h_2$ are also swept by $F$. In particular, this yields that $Rh_1 , Rh_2 \in \mathcal{H}^+(X,\mu)$ and $Rh_1 \neq Rh_2$, in view of Lemma \ref{muhar}(\ref{heh}). Since $h = c R h_1 + (1-c) Rh_2$, we conclude that $h$ is not minimal, which is a contradiction.

It remains to prove the first assertion.
Again, let $\xi\in\partial_{L,F}^{\min}M$ and $h=K(.,\xi)$.
Then $R\xi\in\partial_\mu^{\min}X$ by the first part of the proof.
Then $ER\xi\in\partial_L X\cap\mathcal H^+_F(M,L)$ by \cref{bp330}.
Since $h=ERh$ on $X$ and $h$ is swept by $F$, we have $\xi=ER\xi$, and the assertion follows.
\end{proof}

\begin{cor}\label{mfx2}
We have that
\begin{align*}
    \partial_{L,F}^{\min}M = \partial_L X\cap\mathcal H^+_F(M,L) \quad\text{if and only if}\quad \partial_\mu^{\min}X = \partial_\mu X\cap\mathcal H^+(X,\mu).
\end{align*}
\end{cor}

\begin{proof}
The left hand sides of the equations are contained in the right hand sides, in the first case by \cref{mfx}, in the second by \cite[Theorem 7.53]{Wo}.
Now the assertion follows from Theorems \ref{bp330} and \ref{mfx} since the homeomorphisms there are both given by $R$. 
\end{proof}

\cref{mfx} and \cref{mfx2} establish an analogue of \cref{mami} in the setting of this subsection.

\subsection{The Martin boundary in the uniform case}
Let $X$ be a $*$-recurrent subset of $M$ and $(F_x,V_x)$ balanced LS-data for $X$. We say that $(F_x,V_x)$ are \emph{uniform} LS-data if there exist families $(U_x)_{x \in X}$ of Borel subsets and $(W_x)_{x \in X}$ of relatively compact open subsets of $M$ satisfying the following:
 \begin{enumerate}[label=(U\arabic*)]
 	\item\label{dc1} $F_x \subseteq U_x \subseteq W_x$ and $F_x \cap W_y = \emptyset$ for any $x,y \in X$ with $x \neq y$,
 	\item\label{dc2} $(U_x)_{x \in X}$ is a partition of $M$ and $(W_x)_{x \in X}$ is locally finite,
 	\item\label{dc3} for all $x \in X$, $y,z \in U_x$, and positive harmonic function $h$ on $W_x$,
 	\[
 	h(y) \geq c h(z),
 	\]
 	\item\label{dc4} for any $x\in X$ and $y\in U_x$, the balayage $\beta_y$ of the Dirac measure $\delta_y$ onto the closed set $F_x\cup(M\setminus W_x)$, satisfies
 	\begin{align*}
 		\beta_y(F_x) \ge c.
 	\end{align*}
 \end{enumerate}
Here, the constants $0<c<1$ in \ref{dc3} and \ref{dc4}, if they exist, are chosen to coincide. If $X$ admits uniform LS-data, then $X$ is called \emph{$*$-uniform}.

\begin{thm}\label{aim}
	If $M$ is $L$-transient and $X$ is $*$-uniform,
	then
	\begin{align*}
		\mathcal H^+(M,L)=\mathcal H^+_F(M,L).
	\end{align*}
\end{thm}

\begin{proof}
	Choose uniform $LS$-data $(F_x,V_x)$ and families $(U_x,W_x)$ as above, and let $h\in\mathcal H^+(M,L)$.
	Because $h$ is bounded harmonic on the relatively compact sets $W_x$, we have
	\begin{align*}
		h(y) = \beta_y(h) = \beta_y(h|_{F_x}) + \beta_y(h|_{\partial 
			W_x})
	\end{align*}
	for all $x\in X$ and $y\in U_x$.
	By the above assumptions, we conclude that
	\begin{align}\label{estf}
		\beta_y(h|_{F_x}) \ge c^2 h(y)
	\end{align}
	for any $y\in U_x$.
	
	For any measure $\mu$ on $U_x$, let $\beta_{\mu}$ be the balayage of $\mu$ onto the closed set $F_x\cup(M\setminus W_x)$,
	\begin{align*}
		\beta_{\mu} = \int\beta_{y}\mu(dy).
	\end{align*}
	More generally, if the support of $\mu$ is not contained in $U_x$, set
	\begin{align*}
		\beta_\mu = \sum_{x\in X}\beta_{\mu_x} = \sum_{x\in X}\int_{U_x}\beta_{y}\mu(dy),
	\end{align*}
	where $\mu_x$ denotes the restriction of $\mu$ to $U_x$.
	Now given any finite measure $\mu_n$ on $M$, set
	\begin{align*}
		\mu_{n+1} = \sum_x\beta_{\mu_{nx}}|_{\partial W_x}
		\quad\text{and}\quad
		\tau_{n+1} = \sum_x\beta_{\mu_{nx}}|_{F_x} = \beta_{\mu_n}|_F.
	\end{align*}
	Recall here that the $F_x$ and $\partial W_y$ do not intersect,
	and note that the $\partial W_x$ are not all disjoint since the $U_x$ form a partition of $M$.
	However, the $\partial W_x$ are a locally finite family of subsets of $M$.
	
	For any finite Borel measure $\mu_n$, we have
	\begin{align*}
		\mu_{n+1}(h) + \tau_{n+1}(h) = \mu_n(h).
	\end{align*}
	Moreover, by \eqref{estf},
	\begin{align*}
		\tau_{n+1}(h) \ge c^2\mu_n(h).
	\end{align*}
	Therefore, if $\mu_n(h)<\infty$, then
	\begin{align*}
		\mu_{n+1}(h) \le (1-c^2)\mu_n(h).
	\end{align*}
	Now to show the assertion, let $y\in M$.
	If $y\in F$, we are done.
	If $y\in M\setminus F$, let $\mu_0=\delta_y$ and $\tau_0=0$.
	Then $\mu_0(h)+\tau_0(h)=h(y)<\infty$.
	Recursively by the above procedure, we obtain measures $\mu_n$ and $\tau_n$ such that
	\begin{align*}
		\mu_{n}(h) + \sum_{k\le n}\tau_{k}(h) = h(y)
		\quad\text{and}\quad
		\mu_n(h) \le (1-c^2)^nh(y).
	\end{align*}
	Now $\sum_n\tau_n=\beta_F$, the balayage of $\delta_y$ onto $F$ and therefore $\beta_F(h) = h(y)$.
	This is the assertion.
\end{proof}

\begin{cor}\label{unifbound}
If $M$ is $L$-transient, $X$ is $*$-uniform, and $X$ is endowed with the family $\mu$ of $LS$-measures associated to uniform $LS$-data $(F_x,V_x)$, then
\[
\partial_\mu X = \partial_\mu X \cap H^+(X,\mu).
\]
\end{cor}

\begin{proof}
Follows readily from Theorems \ref{bp330} and \ref{aim}.
\end{proof}

\begin{thm}\label{martin}
Assume that $M$ is $L$-transient, $X$ is $*$-uniform,
and $X$ is endowed with the family $\mu$ of $LS$-measures associated to uniform $LS$-data $(F_x,V_x)$.
Then 
\begin{enumerate}
\item\label{marmin}
the $\mu$-random walk on $X$ is transient
and the inclusion $X\to M$ extends to a homeomorphism
\begin{align*}
	\partial_\mu^{\min}X\to\partial_L^{\min}M; 
\end{align*}
\item\label{parmin}
$\partial_L^{\min}M=\partial_LM$ if and only if $\partial_\mu^{\min}X=\partial_\mu X$.
\end{enumerate}
\end{thm}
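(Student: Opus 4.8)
The plan is to derive \cref{martin} from the already-established homeomorphism of full Martin boundaries in \cref{martin1}, using the extra hypothesis of $*$-uniformity to control the behavior of Martin kernels near the boundary. By \cref{martin1}, the inclusion $X\to M$ extends to a convex homeomorphism $\Phi\colon\partial_\mu X\to\partial_LM\cap\bar X$. A convex homeomorphism between Choquet simplices (or compact convex sets arising as Martin boundaries) should carry extreme points to extreme points, and since minimal positive harmonic functions are exactly the extreme rays of the respective cones, $\Phi$ ought to restrict to a bijection $\partial_\mu^{\min}X\to(\partial_LM\cap\bar X)^{\min}$. The real content of part \eqref{marmin} is therefore to show that every minimal point of $\partial_LM$ actually lies in $\bar X$, i.e.\ that $(\partial_LM\cap\bar X)^{\min}=\partial_L^{\min}M$; this is where $*$-uniformity must enter.

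First I would make precise the relation between the $\mu$-Martin kernel on $X$ and the $L$-Martin kernel on $M$ near the boundary. Using \cref{Greengreen}, $G(y,x)=BCg(y,x)$ for $y\in M\setminus V_x$, so along any sequence $x_n\in X$ escaping to infinity the normalized Green functions $K(\cdot,x_n)=G(\cdot,x_n)/G(x_0,x_n)$ and the random-walk Martin kernels $g(\cdot,x_n)/g(x_0,x_n)$ have the same limiting behavior on $X$ (and, by harmonicity and the sweeping identity of \cref{lsh3}, on all of $M$). This already yields the homeomorphism $\partial_\mu^{\min}X\to\partial_L^{\min}(M,L)\cap\bar X$ as the restriction of $\Phi$.

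The main obstacle, and the step where $*$-uniformity is essential, is to prove that $\partial_L^{\min}M\subseteq\bar X$, so that no minimal Martin boundary point of $M$ is lost by passing to $X$. The idea is that condition \ref{dc1} forces $M=\cup_x K_x$ with a locally finite cover by the $U_x$, while \ref{dc2} gives a uniform Harnack-type comparison between exit distributions from $U_x$ based at $y\in K_x$ and at the center $x$. Given a minimal $\eta\in\partial_L^{\min}M$ represented by $K(\cdot,\eta)=\lim K(\cdot,y_n)$ with $y_n\to\infty$, I would choose $x_n\in X$ with $y_n\in K_{x_n}$ and use \ref{dc2} together with \cref{swel} (balayage onto $F$) to compare $K(\cdot,y_n)$ with $K(\cdot,x_n)$ up to a bounded multiplicative factor in $[1/C,C]$. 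Passing to a subsequence so that $K(\cdot,x_n)$ converges, minimality of $\eta$ forces the two limits to be proportional, whence $\eta\in\bar X$. This is the delicate point: one must ensure the comparison constant does not degenerate and that minimality survives the limit, which is exactly what the uniform bound $C$ in \ref{dc2} and local finiteness in \ref{dc1} provide.

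Finally, part \eqref{parmin} is a formal consequence once \eqref{marmin} is in hand. Since $\Phi$ is a homeomorphism of the full boundaries with $\Phi(\partial_\mu^{\min}X)=\partial_L^{\min}M$, the equality $\partial_L^{\min}M=\partial_LM$ holds precisely when $\Phi^{-1}$ maps $\partial_LM\cap\bar X$ onto $\partial_\mu X$ with the minimal part filling everything, i.e.\ $\partial_\mu^{\min}X=\partial_\mu X$. I would phrase this by noting that $\Phi$ identifies the pair $(\partial_\mu X,\partial_\mu^{\min}X)$ with $(\partial_LM\cap\bar X,\partial_L^{\min}M)$, and that under $*$-uniformity $\partial_LM\cap\bar X=\partial_LM$ by the same covering argument applied to arbitrary (not necessarily minimal) boundary sequences; hence the two equalities of minimal-equals-full are equivalent.
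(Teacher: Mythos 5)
Your reduction to \cref{martin1}---treating the identification of minimal parts as a formal consequence of the ``convex homeomorphism'' $\Phi\colon\partial_\mu X\to\partial_LM\cap\bar X$---has a genuine gap at its central step. The set $\partial_LM\cap\bar X$ is not convex, and the minimal part of a Martin boundary is not the set of extreme points of the boundary in any intrinsic convex structure: minimality of $K(\cdot,\eta)$ is a property relative to the full cone $\mathcal{H}^+(M,L)$, while minimality of a $\mu$-harmonic function on $X$ is relative to the full cone $\mathcal{H}^+(X,\mu)$, and \cref{martin1} identifies only the boundary sets, not these cones. (The cone isomorphism is \cref{harmc}, which the paper \emph{deduces from} \cref{martin}; invoking it here would be circular.) Concretely, two assertions must be proved and neither is formal: (i) if $\eta\in\partial_L^{\min}M$, then $K(\cdot,\eta)|_X$ is a \emph{minimal} positive $\mu$-harmonic function---the paper proves this by first showing $K(\cdot,x_n)\to K(\cdot,\eta)$ via the uniform Harnack comparison (this much your proposal contains), then that $K(\cdot,\eta)$ is swept, so $K(y,\eta)=\mu_y(K(\cdot,\eta))$ by \cref{lsh3}, and finally that any $\mu$-harmonic $f\le K(\cdot,\eta)|_X$ has $L$-harmonic extension $h(y)=\mu_y(f)\le K(y,\eta)$, forcing $h=cK(\cdot,\eta)$ and hence $f=cK(\cdot,\eta)|_X$; (ii) conversely, the extension of a minimal positive $\mu$-harmonic function is minimal $L$-harmonic, which the paper obtains by decomposing the extension over $\partial_L^{\min}M$ and restricting to $X$, an argument that itself requires (i). Your appeal to ``extreme points go to extreme points'' substitutes for neither; a point of $\partial_LM\cap\bar X$ could a priori fail to be minimal on $M$ because it decomposes against minimal functions whose behavior is invisible inside $\bar X$, and ruling this out is exactly steps (i) and (ii).

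There is a second, independent error in your treatment of part \eqref{parmin}: the claim that $*$-uniformity gives $\partial_LM\cap\bar X=\partial_LM$ ``by the same covering argument applied to arbitrary (not necessarily minimal) boundary sequences.'' For a non-minimal $\eta\in\partial_LM$ the Harnack comparison only yields a subsequential limit $\xi\in\partial_LM\cap\bar X$ with $C^{-2}K(\cdot,\eta)\le K(\cdot,\xi)\le C^2K(\cdot,\eta)$; without minimality on one side, comparability cannot be upgraded to equality, so one cannot conclude $\eta\in\bar X$. The paper never asserts $\partial_LM\subseteq\bar X$. Its proof of \eqref{parmin} uses the hypothesis $\partial_\mu^{\min}X=\partial_\mu X$ precisely so that the limit $K(\cdot,\xi)$ of the $K(\cdot,x_n)$ is minimal, whence $K(\cdot,\eta)=K(\cdot,\xi)$ on $X$ by minimality and the normalization at $x_0$, and then $K(\cdot,\eta)$ is minimal on $M$ by part \eqref{marmin}; the reverse implication is argued similarly. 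So part \eqref{parmin} is not a formal consequence of part \eqref{marmin} plus \cref{martin1}; it needs this additional argument, and your version of it does not close.
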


\begin{proof}
Choose the origin $x_0$ for the Martin kernels associated to $L$ and $\mu$ in $X$; see \eqref{marker}.
Then they coincide on all points $(y,x)$ in $M\times X$ with $y\notin V_x$ and $x\ne x_0$,
by \cref{Greengreen}. Let $(U_x,W_x)$ be families as in the definition of uniform LS-data.

Let $\eta\in\partial_L^{\min}M$
and  denote the associated minimal positive $L$-harmonic function on $M$ by $K(.,\eta)$. It follows from \cref{aim} that $K(.,\eta) \in \mathcal{H}_F^+(M,L)$, which means that $\eta \in \partial_{L,F}^{\min} M$. In view of \cref{mfx}, we derive that $\eta \in \partial_L X$ and $R \eta \in \partial_{\mu}^{\min}X$. In this sense, $\partial_L^{\min}M\subseteq\partial_\mu^{\min}X$.
Conversely, the extension of a minimal positive $\mu$-harmonic function on $X$ is a minimal positive $L$-harmonic function on $M$, by virtue of Theorem \ref{mfx}.

As for \eqref{parmin},
suppose that $\partial_\mu^{\min}X=\partial_\mu X$.
Consider $\eta \in \partial_L M$ and a diverging sequence of points $y_n\in M$ such that $K(.,y_n)\to K(.,\eta)$.
Choose $x_n\in X$ with $y_n\in U_{x_n}$.
Then the sequence of $x_n$ is diverging and, up to passing to a subsequence, we have that $k(.,x_n) \to k(.,\xi)$ and $K(.,x_n) \rightarrow K(.,E\xi)$ for some $\xi \in \partial_\mu X = \partial_\mu^{\min}X$. From the above, we deduce that $K(.,E\xi)$ is minimal. Keeping in mind that the $G(.,x)$ are $L$-harmonic on $M\setminus\{x\}$, the uniform Harnack constants of \ref{dc3} imply that, for any $x\in M$,
\begin{align*}
  K(x,y_n) < c^{-2} K(x,x_n)
\end{align*}
for all sufficiently large $n$, which yields that  $K(.,\eta)\le c^{-2} K(.,E \xi)$.
Since $K(.,E \xi)$ is minimal and $K(x_0, \eta)=K(x_0,E \xi)=1$,
we conclude that $K(.,\eta)=K(.,E\xi)$. This means that $\eta = E \xi \in \partial_L^{\min} M$. The proof of the other direction is similar, in view of Theorem \ref{aim}.
\end{proof}

Since any positive $L$-harmonic function on $M$ and positive $\mu$-harmonic function on $X$
can be written as an average of minimal positive $L$-harmonic and $\mu$-harmonic functions,
respectively, \cref{martin} has the following consequence.

\begin{cor}\label{harmc}
Assume that $M$ is $L$-transient, that $X$ is $*$-uniform,
and that $X$ is endowed with the family $\mu$ of $LS$-measures associated to uniform $LS$-data $(F_x,V_x)$.
Then
\begin{enumerate}
\item\label{harmps}
for any $h\in\mathcal{H}^+(M,L)$, the restriction $h|_X$ of $h$ to $X$ belongs to $\mathcal{H}^+(X,\mu)$.
More precisely, $h(y)=\mu_y(h)$ for all $y\in M$;
\item\label{harmpl}
the restriction map
\begin{align*}
	\mathcal{H}^+(M,L)\to\mathcal{H}^+(X,\mu), \quad h\mapsto h|_X,
\end{align*}
is an isomorphism of cones.
\end{enumerate}
\end{cor}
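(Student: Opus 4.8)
The plan is to deduce \cref{harmc} from \cref{martin} using the integral representation of positive harmonic functions over the minimal part of the respective Martin boundaries. The key inputs are the Poisson-type formula \eqref{poisson2} for positive $L$-harmonic functions on $M$, its analogue for positive $\mu$-harmonic functions on $X$, and the $\Gamma$-equivariant homeomorphism $\partial_\mu^{\min}X\to\partial_L^{\min}M$ established in \cref{martin}.\ref{marmin}. I would also repeatedly invoke \cref{lsh3}, which guarantees that a positive $L$-harmonic function swept by $F$ satisfies $\mu_y(h)=h(y)$ for all $y\in M$.

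First I would prove \eqref{harmps}. Let $h\in\mathcal{H}^+(M,L)$. By the integral representation \eqref{poisson2}, there is a probability measure $\pi$ on $\partial_L^{\min}M$ (after normalizing $h(x_0)=1$, which is harmless) such that
\begin{align*}
	h(x) = \int_{\partial_L^{\min}M} K(x,\eta)\,\pi(d\eta).
\end{align*}
For each $\eta\in\partial_L^{\min}M$, the function $K(.,\eta)$ is a minimal positive $L$-harmonic function on $M$, and in the course of proving \cref{martin} it was shown that such $K(.,\eta)$ is swept by $F$, whence $\mu_y(K(.,\eta))=K(y,\eta)$ for all $y\in M$ by \cref{lsh3}.\ref{lsh3a}. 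Integrating this identity against $\pi$ and exchanging the order of integration (justified by Tonelli, since all integrands are nonnegative) yields $\mu_y(h)=h(y)$ for all $y\in M$; restricting to $y\in X$ shows $h|_X\in\mathcal{H}^+(X,\mu)$. This simultaneously establishes that the restriction map is well defined and that every $h$ in the cone is in fact swept by $F$.

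Next I would establish that the restriction map in \eqref{harmpl} is a bijection of cones; linearity over nonnegative scalars and additivity are immediate, so it remains to produce an inverse. Given $f\in\mathcal{H}^+(X,\mu)$, I would use the Martin representation on $X$ to write $f$ as an integral of minimal positive $\mu$-harmonic functions, transport the representing measure to $\partial_L^{\min}M$ via the homeomorphism of \cref{martin}.\ref{marmin}, and define $h(y)=\mu_y(f)$ on all of $M$. The argument in the proof of \cref{martin} shows that the $L$-harmonic extension of a minimal positive $\mu$-harmonic function is a minimal positive $L$-harmonic function, so integrating produces a genuine element $h\in\mathcal{H}^+(M,L)$; conversely the relation $h(y)=\mu_y(f)$ together with \eqref{harmps} gives $h|_X=f$, and injectivity follows since $h(y)=\mu_y(h|_X)$ recovers $h$ from its restriction.

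The main obstacle I anticipate is the careful interchange of the defining sum-integral $\mu_y(h)=\sum_{x\in X}\mu_y(x)h(x)$ with the Martin integral over $\partial_L^{\min}M$, and ensuring that the two integral representations (on $M$ and on $X$) are matched correctly under the boundary homeomorphism — in particular that the representing measure for $h$ and the representing measure for $h|_X$ correspond under \cref{martin}.\ref{marmin}, which is exactly what guarantees that restriction and extension are mutually inverse rather than merely one-sided. Since every function in sight is positive, Tonelli removes the analytic difficulty of the interchange, so the real content is the bookkeeping that the convex homeomorphism of minimal boundaries intertwines the two Poisson-type representations.
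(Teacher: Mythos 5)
Your proposal is correct and is essentially the paper's own argument: the paper derives \cref{harmc} from \cref{martin} in one sentence, noting that every positive $L$-harmonic (resp.\ $\mu$-harmonic) function is an average of minimal ones, and your write-up simply fills in that outline, using the facts established inside the proof of \cref{martin} (minimal kernels $K(.,\eta)$ satisfy $\mu_y(K(.,\eta))=K(y,\eta)$, and extensions of minimal $\mu$-harmonic functions are minimal $L$-harmonic) together with Tonelli and the boundary homeomorphism. The bookkeeping you flag as the main obstacle is exactly what the paper leaves implicit, so there is no substantive difference in route.
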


\section{Diffusion processes and covering projections}
\label{secdico}

Let $\pi\colon M\to N$ be a covering of connected manifolds
with group $\Gamma$ of covering transformations.
We do not assume that $\pi$ is normal, that is,
that the action of $\Gamma$ is transitive on the fibers of $\pi$.
Let $L_0$ be an elliptic diffusion operator on $N$ and $L$ be the pull back of $L_0$ to $M$.
Let $(E_t)_{t\ge0}$ be the diffusion process on $N$ associated to $L_0$.
For simplicity we assume here that $(E_t)$ has infinite life time.
Then we may view $(E_t)$ as the evaluation at time $t\ge0$
on the space $\Omega_N$ of continuous paths $\omega\colon\R_{\ge0}\to N$
together with a family of probability measures $(Q_y)_{y\in N}$ on $\Omega_N$
which are determined by their distributions $Q_y[\omega(t)\in A]$, $A\in\mathcal{B}(N)$, at times $t\ge0$,
where $\mathcal{B}(N)$ denotes the $\sigma$-algebra of Borel sets in $N$.

By the path lifting property of $\pi$, the map
\begin{align}\label{omega}
	H \colon \{(x,\omega)\in M\times\Omega_N\mid\pi(x)=\omega(0)\} \to \Omega_M, \quad
	H(x,\omega) = \omega_x,	
\end{align}
where $\omega_x$ denotes the continuous lift of $\omega$ to $M$ starting at $x$,
is a homeomorphism with respect to the compact-open topologies.
In what follows, we identify $\Omega_M$ accordingly.
Evaluation $D_t$ at time $t\ge0$ on $\Omega_M$ is then given by $D_t(x,\omega)=\omega_x(t)$.

\begin{thm}\label{liftp}
For $x\in M$ with $\pi(x)=y$, define the probability measure $P_x$ on $\Omega_M$ by 
\begin{align*}
	P_x[A] = Q_y[\{\omega\mid(x,\omega)\in A\}], \quad A\in\mathcal{B}(\Omega_M).
\end{align*}
Then $(D_t)_{t\ge0}$ together with the probability measures $(P_x)_{x\in M}$
is the diffusion process on $M$ with generator $L$ on $C^\infty_c(M)$.
\end{thm}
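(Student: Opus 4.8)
The plan is to check that $(D_t)$ together with the measures $(P_x)$ satisfies the two defining properties of the diffusion process with generator $L$, namely the initial condition $P_x[\omega(0)=x]=1$ and the martingale property \eqref{fx} for every $f\in C^\infty_c(M)$; since these properties characterize the (unique) diffusion process associated to the elliptic operator $L$, this identifies the constructed process with $(D_t)$. The initial condition is immediate: under the identification \eqref{omega} we have $D_0(x,\omega)=\omega_x(0)=x$, and by definition $P_x$ is the pushforward of $Q_{\pi(x)}$ under $\omega\mapsto\omega_x=H(x,\omega)$, so $P_x$ is carried by paths starting at $x$.

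First I would record the Markov structure of the lift. The lift $\omega_x$ restricted to $[0,s]$ is determined by $x$ and the restriction of $\omega$ to $[0,s]$, so $H$ is \emph{causal}: for fixed starting point it carries the natural filtration of $\Omega_N$ to that of $\Omega_M$ and intertwines $E_s=\pi\circ D_s$. Combined with the (strong) Markov property of $(E_t,Q_y)$ and the uniqueness of path lifting, this shows that, conditionally on $\mathcal{F}_s$, the future $(D_{s+u})_{u\ge0}$ has law $P_{D_s}$ (the future base path has law $Q_{E_s}$, and its lift from the known current point $D_s$, with $\pi(D_s)=E_s$, is deterministic). Hence $(D_t,P_x)$ is a strong Markov process.

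The key step is the martingale property, i.e.\ that for every $f\in C^\infty_c(M)$ the process
\begin{align*}
	J^f_t := f\circ D_t - f\circ D_0 - \int_0^t ((Lf)\circ D_s)\,ds
\end{align*}
of \eqref{fx} is a $P_x$-martingale. The obstruction is that a general $f\in C^\infty_c(M)$ does \emph{not} descend to a function on $N$, so one cannot transfer \eqref{fx} from the base in a single step. Since $\pi$ is a local diffeomorphism and $L$ is the pullback of $L_0$, this is resolved by localization. Fix $x_0\in M$ and a relatively compact neighborhood $U$ on a neighborhood of whose closure $\pi$ restricts to a diffeomorphism onto its image $W=\pi(U)$. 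Using a cut-off equal to $1$ near $\pi(\overline U)$, the function $f\circ(\pi|_U)^{-1}$ extends to some $g\in C^\infty_c(N)$ with $f=g\circ\pi$ on a neighborhood of $\overline U$, and hence, because $L=\pi^*L_0$, with $Lf=(L_0g)\circ\pi$ on $U$. Let $\sigma$ be the exit time of $D$ from $U$, which under $H$ equals the exit time of $E$ from $W$. Then for $t\le\sigma$ one has $f\circ D_t=g\circ E_t$ and $(Lf)\circ D_t=(L_0g)\circ E_t$, so that
\begin{align*}
	J^f_{t\wedge\sigma} = g\circ E_{t\wedge\sigma} - g\circ E_0 - \int_0^{t\wedge\sigma}((L_0g)\circ E_u)\,du.
\end{align*}
The right-hand side is the process \eqref{fx} for $L_0$, $g$ and $(E_t)$, stopped at $\sigma$, hence a $Q_{\pi(x_0)}$-martingale; pushed forward through the filtration-preserving map $H$ it shows that $J^f_{t\wedge\sigma}$ is a $P_{x_0}$-martingale for the first exit from an evenly covered neighborhood.

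Finally I would patch these local martingales into the global statement. Choosing a locally finite cover of $M$ by such neighborhoods and defining stopping times $\sigma_0=0<\sigma_1<\cdots$, where $\sigma_{k+1}$ is the first exit of $D$ from a chosen neighborhood of $D_{\sigma_k}$, the strong Markov property together with the previous step shows that the increments of $J^f$ over each interval $[\sigma_k,\sigma_{k+1}]$ are martingale increments. Since $(D_t)$ has infinite life time and the cover is locally finite, $\sigma_k\to\infty$ almost surely, and summing the increments yields that $J^f_t$ itself is a $P_{x_0}$-martingale for every $f\in C^\infty_c(M)$. The main obstacle is precisely the non-descent of $f$ together with the bookkeeping needed to guarantee $\sigma_k\to\infty$; everything else is a faithful transcription of the corresponding statements on $N$ through the deterministic, causal lift $H$.
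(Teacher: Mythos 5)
Your proposal is correct and follows essentially the same route as the paper: localize to evenly covered neighborhoods where $\pi$ is a diffeomorphism intertwining $L$ and $L_0$, match the lifted process with the base diffusion up to the (corresponding) exit times, and patch globally via the strong Markov property. The paper states this in two sentences, leaving the correspondence of the stopped processes and the patching implicit; your martingale-problem formulation, cut-off construction of $g$, and the stopping-time bookkeeping with $\sigma_k\to\infty$ are simply a detailed rendering of the same argument.
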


\begin{proof}
Let $V\subseteq N$ be a relatively compact and connected open domain with smooth boundary
such that a neighborhood $V'$ of $\bar V$  is evenly covered by $\pi$,
and let $U'$ be a sheet of $\pi$ over $V'$ .
Then $\pi\colon U' \to V'$ is a diffeomorphism with $\pi^*L_0=L$.
Hence the diffusion processes associated to $L$ on $U=\pi^{-1}(V)\cap U'$ and $L_0$ on $V$, started at any $x\in U$ respectively $y=\pi(x)\in V$,
correspond to each other until exiting $U$ respectively $V$.
By the strong Markov property of diffusion processes,
this implies that $(D_t)$ is the diffusion associated to $L$.
\end{proof}

\begin{cor}\label{liftpc}
For all $x\in M$ with $\pi(x)=y$, we have
\begin{align*}
	 Q_y[\omega(t)\in A]
	 = P_x[\omega(t)\in\pi^{-1}(A)], 
	 \quad A\in\mathcal{B}(N).
\end{align*}
In particular, if the distributions of $D$ and $E$ are given by densities $p=p(t,x,u)$ on $M$ and $q=q(t,y,v)$ on $N$
with respect to associated smooth volume elements on $M$ and $N$, then
\begin{equation*}
	q(t,y,v) = \sum_{u\in\pi^{-1}(v)} p(t,x,u).
\end{equation*}
\end{cor}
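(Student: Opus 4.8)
The plan is to read off both assertions directly from the path-space identification set up before \cref{liftp}, together with the definition of $P_x$ given there.

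For the first identity, I would work on $\Omega_M\cong\{(x,\omega)\mid\pi(x)=\omega(0)\}$, on which the evaluation is $D_t(x,\omega)=\omega_x(t)$ and $P_x[B]=Q_y[\{\omega\mid(x,\omega)\in B\}]$ for $y=\pi(x)$. The only fact I need is that projecting a lift recovers the base path, i.e.\ $\pi\circ\omega_x=\omega$, which is immediate from the defining property of the continuous lift $\omega_x$ of $\omega$ starting at $x$. Hence the event $\{\omega(t)\in\pi^{-1}(A)\}$ in $\Omega_M$ corresponds under $H$ to the set of pairs $(x,\omega)$ with $\omega_x(t)\in\pi^{-1}(A)$, equivalently with $\pi(\omega_x(t))=\omega(t)\in A$. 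Plugging this into the definition of $P_x$ yields
\begin{align*}
P_x[\omega(t)\in\pi^{-1}(A)]=Q_y[\{\omega\mid\omega(t)\in A\}]=Q_y[\omega(t)\in A],
\end{align*}
which is the asserted equality.

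For the density formula, I would rewrite both sides of this equality as integrals against the respective densities. Writing $\vol_M$ and $\vol_N$ for the associated smooth volume elements, the left-hand side becomes $\int_Aq(t,y,v)\,\vol_N(dv)$ and the right-hand side $\int_{\pi^{-1}(A)}p(t,x,u)\,\vol_M(du)$. Since $L=\pi^*L_0$, the Riemannian metric and the positive weight defining $\vol_M$ are pullbacks under $\pi$ of those defining $\vol_N$, so $\pi$ restricts to a volume-preserving diffeomorphism on each sheet over an evenly covered $A$. Decomposing $\pi^{-1}(A)$ into sheets, changing variables by $\pi$ on each, and interchanging the (countable, nonnegative) sum with the integral gives
\begin{align*}
\int_{\pi^{-1}(A)}p(t,x,u)\,\vol_M(du)=\int_A\Big(\sum_{u\in\pi^{-1}(v)}p(t,x,u)\Big)\vol_N(dv).
\end{align*}
Equating the two integral expressions and using that $A$ is an arbitrary Borel set forces the integrands to agree almost everywhere, whence $q(t,y,v)=\sum_{u\in\pi^{-1}(v)}p(t,x,u)$ pointwise by the smoothness of the densities.

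The content is light once \cref{liftp} is available, so I do not anticipate a serious obstacle. The one point deserving care is the claim $\vol_M=\pi^*\vol_N$ on sheets: it rests on the fact that the metric associated to $L$ and the positive weight in its symmetrizing volume element are determined by the coefficients of $L$, hence are genuine pullbacks under the local diffeomorphism $\pi$ when $L=\pi^*L_0$. I would first establish the identities for evenly covered $A$, where the sheet decomposition and change of variables apply literally, and then pass to general Borel sets by the standard measure-theoretic exhaustion.
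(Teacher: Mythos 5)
Your route is the one the paper intends: \cref{liftpc} is stated without proof as an immediate consequence of \cref{liftp}, and your first identity is exactly that immediacy made explicit -- reading the event $\{\omega(t)\in\pi^{-1}(A)\}$ through the identification $H$, using $\pi\circ\omega_x=\omega$, and plugging into the defining formula for $P_x$. Your unfolding of the integral of $p(t,x,\cdot)$ over $\pi^{-1}(A)$ into sheets, including the justification that $\pi$ is volume preserving on sheets (the associated metric and the drift field, hence the weight $\vf$ up to a multiplicative constant, are determined by the coefficients of $L$ and pull back under $\pi$), is also correct, and it yields that $q(t,y,\cdot)$ and $f(v):=\sum_{u\in\pi^{-1}(v)}p(t,x,u)$ have the same integral over every Borel set, hence agree for almost every $v$.

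The genuine gap is the final sentence, where you pass from almost-everywhere to pointwise equality ``by the smoothness of the densities.'' Smoothness of each term does not make the infinite sum continuous: a countable sum of nonnegative continuous functions is only lower semicontinuous, and such a sum can agree almost everywhere with a continuous function yet differ from it at points. For instance, a continuous partition of unity of $\R\setminus\{0\}$, with all terms supported away from the origin, sums to a function equal to $1$ almost everywhere but equal to $0$ at $0$. Your argument does give one inequality everywhere: each finite partial sum of $f$ is continuous and bounded by $q(t,y,\cdot)$ on a set of full measure, hence everywhere, so $f\le q(t,y,\cdot)$ pointwise. But the reverse inequality at a given point requires showing that the fiber sum is actually continuous, which needs an additional ingredient. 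The standard one is local uniform convergence of the series on an evenly covered chart $V$: the terms $(t,v)\mapsto p(t,x,\sigma_i(v))$, with $\sigma_i$ the local inverses of $\pi$ over $V$, are nonnegative solutions of $\partial_tw=L_0w$, so the parabolic Harnack inequality bounds the tails $\sum_{i\ge n}p(t,x,\sigma_i(v))$ uniformly on compacta by the tails $\sum_{i\ge n}p(t',x,\sigma_i(v_1))$ at a later time $t'$ and a fixed point $v_1$ where the sum is finite (such $v_1$ exists by your a.e.\ identity); the tails then go to $0$ uniformly, $f(t,\cdot)$ is continuous, and a.e.\ equality with the continuous $q(t,y,\cdot)$ upgrades to everywhere. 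With that supplement your proof is complete; without it, the last step is unjustified.
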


From now on, we assume that $N$ is $L_0$-recurrent, that is,
that the diffusion process on $N$ associated to $L_0$ is recurrent.
We choose $z_0\in N$ and let $X=\pi^{-1}(z_0)$, a discrete subset of $M$.
\begin{enumerate}[label=(C)]
\item\label{fv}
We let $F_0\subseteq V_0$ be connected neighborhoods of $z_0$ in $N$
such that $F_0$ is compact, $V_0$ is open and relatively compact,
and $V_0$ is evenly covered by $\pi$.
For $x\in X$,
we let $F_x\subseteq V_x$ be the connected components of $F=\pi^{-1}(F_0)$ and $V=\pi^{-1}(V_0)$ containing $x$.
\end{enumerate}
Any such family $(F_x,V_x)_{x\in X}$, is \emph{$\Gamma$-equivariant}, that is,
$\gamma F_x=F_{\gamma x}$ and $\gamma V_x=V_{\gamma x}$ for all $\gamma$ in the group $\Gamma$ of covering transformations of $\pi$.

\begin{lem}\label{lsc}
If $N$ is $L_0$-recurrent, then
\begin{enumerate}
\item\label{lsca}
any family $(F_x,V_x)$ satisfying \ref{fv} constitutes $LS$-data for $X$,
and the associated $LS$-measures are $\Gamma$-equivariant.
\item\label{lscb}
$X$ admits balanced $LS$-data satisfying \ref{fv}.
\end{enumerate}
Moreover, if $N$ is compact, then $X$ is $*$-uniform and, in particular, admits uniform LS-data satisfying \ref{fv}.
\end{lem}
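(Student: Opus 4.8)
The plan is to verify the three assertions in turn, the common engine being the diffeomorphism $\pi\colon V_x\to V_0$ with $\pi^*L_0=L$ furnished by \ref{fv}, together with the projection correspondence of \cref{liftp} and the Harnack inequality for $L_0$. For \eqref{lsca} I would check \ref{d1}--\ref{d4} directly. Conditions \ref{d1} and \ref{d2} are immediate from \ref{fv}: since $V_0$ is evenly covered, the components $V_x$ of $V=\pi^{-1}(V_0)$ are pairwise disjoint with $x\in\mathring F_x\subseteq F_x\subseteq V_x$, so \ref{d1} holds and $F_x\cap V_y=\emptyset$ for $x\ne y$, giving \ref{d2}. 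For \ref{d3}, $F=\pi^{-1}(F_0)$ is closed because $F_0$ is compact; its recurrence follows from \cref{liftp}, since a path of $(D_t)$ started at $x$ projects to a path of $(E_t)$ started at $\pi(x)$ and meets $F$ exactly when the projected path meets $F_0$, and the latter happens almost surely because $N$ is $L_0$-recurrent and $F_0$ has nonempty interior. For \ref{d4}, push the exit measures forward: $\pi_*\ve(y,V_x)=\ve(\pi y,V_0)$ and $\pi_*\ve(x,V_x)=\ve(z_0,V_0)$, so the Radon--Nikodym derivative equals $d\ve(\pi y,V_0)/d\ve(z_0,V_0)$, the ratio of two harmonic measures on $\partial V_0$ attached to the points $\pi y\in F_0$ and $z_0$ of the fixed compact set $F_0\Subset V_0$; the Harnack inequality for $L_0$ bounds this ratio above and below by a constant $C$ depending only on $F_0$ and $V_0$, uniformly in the boundary variable. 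The $\Gamma$-equivariance of the $LS$-measures then follows from \cref{lsm2}\eqref{lsm2b}, since each $\gamma\in\Gamma$ fixes $L=\pi^*L_0$ and $X$ and satisfies $\gamma F_x=F_{\gamma x}$, $\gamma V_x=V_{\gamma x}$.

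For \eqref{lscb} I would choose $V_0$ with smooth boundary and take $F_0$ to be a superlevel set of the Green function $g_0:=G_{V_0}(\cdot,z_0)$ of $V_0$. Since $g_0$ tends to $+\infty$ at $z_0$ and to $0$ at $\partial V_0$, for large $B$ the set $\{g_0\ge B\}$ is a compact neighborhood of $z_0$ lying well inside $V_0$; near the pole $g_0$ behaves like the fundamental solution, so this superlevel set is connected, and by Sard's theorem I may take $B$ to be a regular value, so that $\partial F_0=\{g_0=B\}$ is a smooth hypersurface. Transporting through $\pi$ gives $G_{V_x}(z,x)=g_0(\pi z)=B$ for every $x\in X$ and $z\in\partial F_x$, which is precisely (D5); the resulting family satisfies \ref{fv}, hence is $LS$-data by \eqref{lsca}, and is balanced.

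For the final claim, suppose $N$ is compact. Then $M$, with the Riemannian metric associated to $L$, is a complete Riemannian covering of a compact manifold and so has bounded geometry, while the fiber $X$ is uniformly separated (distinct points lie in disjoint sheets over an evenly covered ball about $z_0$) and $\diam(N)$-dense (lift a minimizing path from $\pi(p)$ to $z_0$). I would therefore set $U_x=B(x,2R)$ and $K_x=\bar B(x,R)$ for a fixed $R\ge\diam(N)$, using metric balls in $M$, which are relatively compact by completeness. Density yields $\bigcup_x K_x=M$, uniform separation together with volume comparison in bounded geometry yields local finiteness of $\{U_x\}$, so \ref{dc1} holds, and \ref{dc2} is again the Harnack inequality for $L$ on $B(x,2R)$ comparing points of $\bar B(x,R)$, with constant uniform in $x$ by bounded geometry. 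I expect the principal obstacle to lie in \eqref{lscb}, namely arranging that the chosen superlevel set of $g_0$ is simultaneously connected, compactly contained in $V_0$, and bounded by a regular level set, followed by the volume-comparison estimate that secures local finiteness in the $*$-uniform case.
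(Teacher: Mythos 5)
Your proposal is correct and follows essentially the same route as the paper: recurrence of $F=\pi^{-1}(F_0)$ by lifting paths via \cref{liftp}, the Harnack bound for \ref{d4} obtained by pushing exit measures down to $(F_0,V_0)$, balanced data from superlevel sets of the Green function $G_{V_0}(\cdot,z_0)$ transported through the even covering, and $*$-uniformity from metric balls of radius comparable to $\diam N$ with a uniform Harnack constant from bounded geometry (the paper cites Cheng--Yau). The only differences are cosmetic: your appeal to Sard's theorem and to connectivity of the superlevel set is unnecessary, since (as in the paper) one may simply take the connected component of $\{G_{V_x}(\cdot,x)\ge B\}$ containing $x$, on whose boundary the Green function automatically equals $B$; and your explicit packing argument for local finiteness spells out a point the paper leaves implicit.
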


\begin{proof}
\ref{lsca})
Since $N$ is connected and $L_0$-recurrent,
the diffusion process on $N$ associated to $L_0$ started at any point of $N$ will hit $F_0$ with probability one.
Hence its lift started at any point of $M$ will hit the preimage $F$ of $F_0$ with probability one.
Therefore $F$ is recurrent.
The existence of a uniform Harnack constant is clear.

\ref{lscb})
Let $V_0$ be a connected and relatively compact open neighborhood of $z_0$ in $N$ which is evenly covered by $\pi$.
Let $G_{V_0}$ be the Green function of $V_0$ for the operator $L_0$.
Then $G_{V_0}(z_0,.)$ vanishes on the boundary of $V_0$ and $G_{V_0}(z_0,z)$ tends to infinity as $z\in V_0$ tends to $z_0$.
Moreover, under $\pi$, $G_{V_0}$ corresponds to the Green functions $G_{V_x}$ of the $V_x$.
Since $V_0$ is evenly covered, we have $V_x\cap V_y=\emptyset$ for all $x\ne y$ in $X$.
Hence for any constant $B>0$,
the connected components $F_x$ of $\{G_{V_x}\ge B\}$ containing $x$ together with the $V_x$ are balanced $LS$-data that satisfy \ref{fv}.

It remains to prove the last assertion.
Now the Riemannian metric on $N$ associated to $L_0$ is complete since $N$ is compact.
Its pull back to $M$ is complete and equals the Riemannian metric associated to $L$.
Choose $R > 0$ less than half the injectivity radius of $N$ at $z_0$.
Then the open ball $B(z_0,2R)$ is evenly covered.
Consider $F_0 \subseteq V_0 \subseteq B(z_0,R)$ such that the associated LS-data $(F_x,V_x)$ for $X$ as in (\ref{lscb}) are balanced.
For $x\in X$, consider the Dirichlet domain
\[
D_x = \{ z \in M : d(z,x) \leq d(z,y) \text{ for all } y \in X \}
\]
centered at $x$. Since $(D_x)_{x \in X}$ is a closed cover of $M$, by removing closed subsets from them,
we obtain a partition $(U_x)_{x \in X}$ of $M$ (not necessarily $\Gamma$-equivariant), consisting of Borel subsets.
Finally, let $W_x = B(D_x,R)$ be the open $R$-tubular neighborhood of $D_x$ with $x \in X$.

It is easily checked that $B(x,R) \cap W_y = \emptyset$ for all $x,y \in X$ with $x \neq y$, which implies that $F_x \subseteq B(x,R) \subseteq U_x$ and $F_x \cap W_y = \emptyset$ for all $x,y \in X$ with $x \neq y$. Keeping in mind that $\diam D_x \leq \diam M$ for any $x \in X$, it is easily checked that if $z \in M$ belongs to the intersection of $n$ different $W_x$'s, then there exist $n$ pairwise non-homotopic loops based at $z_0$ of length at most $2 \diam M + 2R$. This yields that $(W_x)_{x \in X}$ is locally finite. Property \ref{dc3} follows from the gradient estimate of Cheng and Yau \cite[Theorem 6]{CY}, keeping in mind that the curvature of $M$ is bounded and that $D_x$ is star-shaped with respect to $x$ and is contained in the closed ball $C(x,\diam M)$, for any $x \in X$.

It remains to show that \ref{dc4} is satisfied. To this end, let $r > 0$ such that $B(z_0,2r) \subseteq F_0$. It is clear that there exists $c > 0 $ such that
\[
\varepsilon_y^{V_0 \setminus C(z_0,r)}(C(z_0,r)) \geq c
\]
for any $y \in F_0 \setminus C(z_0,r)$. For $x \in X$, consider the positive harmonic function $h_x$ on $W_x \setminus B(x,r)$ defined by
\[
h_x(y) = \varepsilon_y^{W_x \setminus C(x,r)}(C(x,r)).
\]
Then $h_x(y) \geq \varepsilon_y^{V_x \setminus C(x,r)}(C(x,r)) \geq c$ for any $x \in X$ and $y \in F_x \setminus C(x,r)$. It is evident that for any point in $D_x \setminus F_x$ there exists a point on $\partial F_x$ and a geodesic joining them, of length at most $\diam M$ and image contained in $D_x \setminus F_x^\circ$ (for instance, a segment of a geodesic emanating from $x$).
Since $h_x$ is harmonic in the $r$-tubular neighborhood of $D_x \setminus F_x^\circ$ for any $x \in X$ and the curvature of $M$ is bounded, it follows from \cite[Theorem 6]{CY} that there exists $C>0$ such that
\[
h_x(y) \geq C  \min_{z \in \partial F_x } h_x(z) \geq C c
\]
for any $x \in X$ and $y \in D_x \setminus F_x$. From the definition of $h_x$, it is evident that for any $x \in X$ and $y \in U_x \setminus F_x \subseteq D_x \setminus F_x$, the balayage $\beta_y$ of $\delta_y$ onto $F_x \cup (M \setminus W_x)$ satisfies $\beta_y(F_x) \geq h_x(y) \geq Cc$.
\end{proof}

\begin{proof}[Proof of Theorems \ref{poha} - \ref{trec} in the introduction]
\cref{poha} and \cref{mami} are immediate consequences of \cref{martin} and \cref{harmc},
using that $X$ is $*$-uniform and choosing uniform $LS$-data satisfying \ref{fv} (using \cref{lsc}.\ref{lscb}).

For the first three assertions of \cref{boha}, we need $LS$-data satisfying \ref{fv}, not necessarily balanced.
With such data,
Assertion \ref{bohar} is a consequence of \cref{harmp},
using that bounded harmonic functions are swept by $F$ (\cref{swel}).
Assertion \ref{bohap} follows from \cref{lsh3} and Assertion \ref{bohab} from the second assertion in \cref{harmp}.
Assertion \ref{bohat} follows from \cref{Greengreen},
where we now choose balanced $LS$-data satisfying \ref{fv} (using \cref{lsc}.\ref{lscb}).

Assertion \ref{recp} of \cref{trec} follows from \cref{lspo} with any choice of $LS$-data satisfying \ref{fv}.
Assertion \ref{recm} is a consequence of \cref{bp330},
where we choose balanced $LS$-data satisfying \ref{fv} (using \cref{lsc}.\ref{lscb}).
\end{proof}

\section{Diffusion processes and properly discontinuous actions}
\label{secdiac}
In the case of a covering $\pi\colon M\to N$ considered in \cref{secdico},
the group $\Gamma$ of covering transformations acts properly discontinuously
and freely on $M$
and the diffusion operator $L$ on $M$ is the pull back of a diffusion operator on $N$.
The action of $\Gamma$ on the fibers of $\pi$ is transitive if the covering is normal.
We consider now the case of a properly discontinuous action of a group $\Gamma$ on $M$,
extending the case of normal coverings.
As before, we let $L$ be a diffusion operator on $M$
which is symmetric with respect to a volume element $\vf^2\dv$,
where $\vf$ is a positive smooth function on $M$
and $\dv$ the volume element of the Riemannian metric on $M$ associated to $L$.
We assume that $L$ and $\vf$ are $\Gamma$-invariant.
Without loss of generality, we also assume that the action of $\Gamma$ on $M$ is effective.

We say that $M$ is \emph{$L$-recurrent mod $\Gamma$}
if the stochastic process on $\Gamma\backslash M$ induced by the $L$-diffusion on $M$ is recurrent.
Clearly, $L$-recurrence mod $\Gamma$ is equivalent to the property
that any $\Gamma$-invariant neighborhood of any orbit $\Gamma x$, $x\in M$,
is recurrent with respect to the $L$-diffusion.

We assume from now on now that $M$ is $L$-recurrent mod $\Gamma$.
We let $x_0\in M$ be a point with trivial isotropy group and $X=\Gamma x_0$ be its $\Gamma$-orbit.
\begin{enumerate}[label=(A)]
\item\label{afv}
We let $F_0\subseteq V_0$ be connected neighborhoods of $x_0$
such that $F_0$ is compact, $V_0$ is open and relatively compact,
and $\gamma F_0\cap V_0=\emptyset$ for all $\gamma\in\Gamma$ not equal to $e$.
For $x=\gamma x_0$, we let $F_x=\gamma F_0$ and $V_x=\gamma V_0$.
\end{enumerate}
Except for Assertion \ref{lsag}, which is an immediate consequence of \cref{lsm2},
the proof of the following lemma is very similar to the proof of \cref{lsc}
and will therefore be omitted.

\begin{lem}\label{lsa}
If $M$ is $L$-recurrent mod $\Gamma$, then
\begin{enumerate}
\item\label{lsaa}
any family $(F_x,V_x)$ satisfying \ref{afv} constitutes $LS$-data for $X$,
and the associated $LS$-measures are $\Gamma$-equivariant.
\item\label{lsab}
$X$ admits balanced $LS$-data satisfying \ref{afv}.
\item\label{lsag}
under the identification $X\cong\Gamma x_0$ via the orbit map,
the measures $\mu_x$ correspond to the left translates of a probability measure $\nu$ on $\Gamma$
with $\supp\nu=\Gamma$.
\end{enumerate}
Moreover, if the action of $\Gamma$ on $M$ is uniform, then $X$ is $*$-uniform and, in particular, admits uniform LS-data satisfying \ref{afv}.
\end{lem}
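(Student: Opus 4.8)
The plan is to mirror the proof of \cref{lsc}, with the quotient $\Gamma\backslash M$ and the diffusion it carries playing the role that $N$ and $L_0$ play there. The organizing principle is that the $\Gamma$-invariance of $L$ and $\vf$, together with the $\Gamma$-equivariance $\gamma F_x=F_{\gamma x}$, $\gamma V_x=V_{\gamma x}$ built into \ref{afv}, lets me reduce every condition at a general orbit point $x=\gamma x_0$ to the corresponding condition at the base point $x_0$: the covering transformation $\gamma$ is a diffeomorphism that intertwines the $L$-diffusion with itself and carries the data at $x_0$ to the data at $x$.

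For \eqref{lsaa} I would verify the axioms \ref{d1}--\ref{d4} in turn. Condition \ref{d1} follows from $x_0\in\mathring{F}_0\subseteq F_0\subseteq V_0$ by applying $\gamma$. For \ref{d2}, writing $x=\gamma x_0$ and $y=\delta x_0$ with $\gamma\ne\delta$, the disjointness $F_x\cap V_y=\emptyset$ is equivalent to $(\delta^{-1}\gamma)F_0\cap V_0=\emptyset$, which is exactly the hypothesis in \ref{afv} since $\delta^{-1}\gamma\ne e$. For \ref{d3}, the set $F=\cup_{x\in X}F_x$ is a $\Gamma$-invariant neighborhood of the orbit $X$, since $x_0\in\mathring{F}_0$; hence $F$ is recurrent by the characterization of $L$-recurrence mod $\Gamma$ recorded above. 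That $F$ is closed follows from proper discontinuity: each compact subset of $M$ meets only finitely many translates $\gamma F_0$, so the family $\{\gamma F_0\}$ is locally finite and its union is closed. For \ref{d4}, $\Gamma$-equivariance gives $d\ve(y,V_x)/d\ve(x,V_x)=d\ve(\gamma^{-1}y,V_0)/d\ve(x_0,V_0)$ for $y\in F_x$, so it suffices to bound this ratio for $y\in F_0$, which holds by continuity since $F_0$ is a compact subset of the open set $V_0$. The $\Gamma$-equivariance of the resulting $LS$-measures is then \cref{lsm2}.\ref{lsm2b}, applied to the elements of $\Gamma$, which leave $L$, $X$, and the data invariant.

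Assertion \eqref{lsab} copies \cref{lsc}.\ref{lscb}: let $G_{V_0}$ be the Green function of the $L$-diffusion on the single relatively compact domain $V_0$; by equivariance the Green functions $G_{V_x}(\cdot,x)$ are the translates $G_{V_0}(\gamma^{-1}\cdot,x_0)$, so the balancing condition (D5) at every $x$ reduces to the single condition at $x_0$, and taking $F_0$ to be the component of $\{G_{V_0}(\cdot,x_0)\ge B\}$ containing $x_0$ yields balanced data satisfying \ref{afv}. Assertion \eqref{lsag} is immediate from \cref{lsm2}.\ref{lsm2b}: setting $\nu(\eta)=\mu_{x_0}(\eta x_0)$ on $\Gamma$, equivariance gives $\mu_{\gamma x_0}(\delta x_0)=\mu_{x_0}(\gamma^{-1}\delta x_0)=\nu(\gamma^{-1}\delta)$, so $\mu_{\gamma x_0}$ is the left translate of $\nu$ by $\gamma$, and $\supp\nu=\Gamma$ because $\mu_{x_0}(x)>0$ for all $x\in X$ by \cref{lsm2}.\ref{lsm2a}. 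Finally, if the action is uniform then $\Gamma\backslash M$ is compact, so the $\Gamma$-invariant Riemannian metric associated to $L$ is complete with uniformly bounded curvature; taking $K_x$ and $U_x$ to be the closed and open balls of radii $\diam(\Gamma\backslash M)$ and $\diam(\Gamma\backslash M)+1$ about $x$ gives families satisfying \ref{dc1} and \ref{dc2}, the uniform Harnack constant coming from bounded curvature exactly as in \cref{lsc}.

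The step demanding the most care is \ref{d3}: one must simultaneously invoke proper discontinuity, to guarantee that $F$ is closed through local finiteness of the translates of the compact set $F_0$, and the precise meaning of \emph{$L$-recurrent mod $\Gamma$}, to guarantee that the $\Gamma$-invariant neighborhood $F$ of the orbit is recurrent. These two hypotheses are exactly what replace the even-covering property used in \cref{lsc}; the remaining verifications are a routine transcription of that proof, with $\pi^{-1}$ replaced by the $\Gamma$-orbit and even covering by the disjointness hypothesis in \ref{afv}.
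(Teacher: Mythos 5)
Your proof is correct and is essentially the paper's own: the paper in fact omits the argument, noting only that assertion \eqref{lsag} is immediate from \cref{lsm2} and that the rest is \lq very similar\rq\ to the proof of \cref{lsc}, and your write-up is exactly that transcription (equivariance of the $LS$-measures via \cref{lsm2}.\ref{lsm2b}, recurrence and closedness of $F$ from mod-$\Gamma$ recurrence and proper discontinuity, balanced data from Green-function superlevel sets, and the ball construction for $*$-uniformity). The one point worth making explicit in \eqref{lsab} is that $V_0$ should be chosen so that its $\Gamma$-translates are pairwise disjoint --- possible by proper discontinuity and the triviality of the isotropy group of $x_0$, and the exact analog of \lq evenly covered\rq\ in \cref{lsc}.\ref{lscb} --- since the enlarged superlevel-set $F_0$ inherits the disjointness requirement of \ref{afv} only through $F_0\subseteq V_0$.
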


The proofs of the next assertions are very similar to the ones
of Theorems \ref{poha} -- \ref{trec} at the end of \cref{secdico}
and will therefore be omitted.

\begin{theadp}\label{pohag}
The assertions of Theorems \ref{poha} and \ref{mami} hold if $\Gamma\backslash M$ is compact,
the assertions of Theorems \ref{boha} and  \ref{trec} if $M$ is $L$-recurrent mod $\Gamma$. 
\end{theadp}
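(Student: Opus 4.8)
The plan is to follow verbatim the pattern used for Theorems \ref{poha}--\ref{trec} at the end of \cref{secdico}, replacing the covering data of \cref{lsc} by the properly discontinuous data of \cref{lsa}, and the roles of ``$N$ compact'' and ``$N$ is $L_0$-recurrent'' by ``$\Gamma\backslash M$ compact'' and ``$M$ is $L$-recurrent mod $\Gamma$''. The key point is that the entire machinery of \cref{seclsd}---\cref{harmp}, \cref{lsh3}, \cref{lspo}, \cref{Greengreen}, \cref{martin1}, \cref{martin}, and \cref{harmc}---is formulated for an arbitrary $*$-recurrent discrete subset $X\subseteq M$ endowed with ($*$-uniform, balanced) $LS$-data, and makes no reference to the group. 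Here $X=\Gamma x_0$, and \cref{lsa} supplies exactly such data: any family $(F_x,V_x)$ satisfying \ref{afv} constitutes $LS$-data with $\Gamma$-equivariant $LS$-measures, balanced data satisfying \ref{afv} exist, and $X$ is $*$-uniform whenever the action of $\Gamma$ is uniform. First I would fix such a family.

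For the cocompact assertions (the analogues of \cref{poha} and \cref{mami}) I would note that compactness of $\Gamma\backslash M$ makes the induced process on the quotient recurrent (\cref{comrec}), so that $M$ is $L$-recurrent mod $\Gamma$ and $F$ is recurrent, that the action is uniform so that $X$ is $*$-uniform, and that balanced data exist, all by \cref{lsa}. With balanced data in hand, \cref{harmc} yields the cone isomorphism $\mathcal{H}^+(M,L)\to\mathcal{H}^+(X,\mu)$ together with the identity $h(y)=\mu_y(h)$, which is the analogue of \cref{poha}; and, under the additional transience hypothesis of \cref{mami}, \cref{martin} yields transience of the $\mu$-random walk, the homeomorphism $\partial_\mu^{\min}X\to\partial_L^{\min}M$, and the equivalence $\partial_L^{\min}M=\partial_LM$ if and only if $\partial_\mu^{\min}X=\partial_\mu X$, which is the analogue of \cref{mami}.

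For the recurrent assertions (the analogues of \cref{boha} and \cref{trec}) it suffices that $M$ be $L$-recurrent mod $\Gamma$, which makes $F$ recurrent by \cref{lsa}. Using $LS$-data satisfying \ref{afv}, not necessarily balanced, Assertion \ref{bohar} follows from \cref{harmp} together with the fact that bounded $L$-harmonic functions are swept by $F$ (\cref{swel}), Assertion \ref{bohap} from \cref{lsh3}, and Assertion \ref{bohab} from the second statement of \cref{harmp}; choosing balanced data, Assertion \ref{bohat} follows from \cref{Greengreen}. Likewise Assertion \ref{recp} follows from \cref{lspo} for any data satisfying \ref{afv}, and Assertion \ref{recm} from \cref{martin1} for balanced data.

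The one point that genuinely needs attention---the main obstacle, such as it is---is the $\Gamma$-equivariance claimed in Theorems \ref{poha}--\ref{trec}, since the results of \cref{seclsd} are stated without any group action. I expect this to propagate entirely from the identity $\mu_{\gamma y}(\gamma x)=\mu_y(x)$ of \cref{lsm2}, recorded again in \cref{lsa}.\ref{lsaa}: it forces the restriction map, the Green functions $g$ and $G$, the Martin kernels, and the $LS$-path discretization to intertwine the $\Gamma$-actions on $M$ and on $X$, so that the induced maps of harmonic cones, of bounded-harmonic spaces, and of Martin and Poisson boundaries are automatically $\Gamma$-equivariant. Apart from this bookkeeping the argument is a verbatim transcription of the proof at the end of \cref{secdico}, which is why it may reasonably be omitted.
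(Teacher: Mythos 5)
Your proposal is correct and matches the paper's intent exactly: the paper omits this proof precisely because it is, as you say, a verbatim transcription of the argument at the end of \cref{secdico}, with \cref{lsa} playing the role of \cref{lsc} and the hypotheses on $N$ replaced by the corresponding hypotheses on the $\Gamma$-action. Your added remarks on cocompactness forcing $L$-recurrence mod $\Gamma$ and on $\Gamma$-equivariance propagating from \cref{lsm2} are exactly the bookkeeping the paper takes for granted.
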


We now come to the extensions of Theorems \ref{dichot}, \ref{fchcin}, and \ref{noram}.

\setcounter{alphp}{4}
\begin{theop}\label{dichota}
If $M$ is $L$-recurrent mod $\Gamma$ and $h$ is a minimal positive $L$-harmonic function on $M$,
then $h$ is constant or $\gamma^*h/h$ is unbounded for some $\gamma\in\Gamma$.
\end{theop}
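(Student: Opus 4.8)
The plan is to prove the contrapositive: assuming that $\gamma^*h/h$ is bounded on $M$ for \emph{every} $\gamma\in\Gamma$, I will show that $h$ is constant.

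First I would exploit minimality. Since $L$ is $\Gamma$-invariant, each $\gamma^*h$ is again a positive $L$-harmonic function, and the assumed bound $\gamma^*h\le C_\gamma h$ gives $\tfrac1{C_\gamma}\gamma^*h\le h$. As $h$ is minimal, this forces $\gamma^*h=\chi(\gamma)h$ for some $\chi(\gamma)>0$. A short computation using $(\gamma_1\gamma_2)^*=\gamma_2^*\gamma_1^*$ shows that $\chi\colon\Gamma\to\R_{>0}$ is a homomorphism, so that on the orbit $X=\Gamma x_0$ we have $h(\gamma x_0)=\chi(\gamma)h(x_0)$.

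Next I would discretize. If $M$ is $L$-recurrent then $h$ is already constant by \cref{swel2}, so I may assume $M$ is $L$-transient. I would choose balanced $LS$-data satisfying \ref{afv}, which exist by \cref{lsa}.\ref{lsab}. By \cref{Greengreen} the $\mu$-random walk on $X$ is then transient and symmetric, and by \cref{lsa}.\ref{lsag} the measures $\mu_x$ are the left translates of a symmetric probability measure $\nu$ on $\Gamma\cong X$ with $\mu_{x_0}=\nu$ and $\supp\nu=\Gamma$. For positive $L$-harmonic functions one always has $\mu_y(h)\le h(y)$ (\cref{lsh3}); evaluated at $x_0$ this reads
\begin{align*}
	\Big(\sum_{\gamma\in\Gamma}\nu(\gamma)\chi(\gamma)\Big)h(x_0)=\mu_{x_0}(h)\le h(x_0),
\end{align*}
so that $\sum_{\gamma}\nu(\gamma)\chi(\gamma)\le1$.

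The crux is then a symmetrization. Using $\nu(\gamma)=\nu(\gamma^{-1})$ and $\chi(\gamma^{-1})=\chi(\gamma)^{-1}$, I would rewrite
\begin{align*}
	\sum_{\gamma\in\Gamma}\nu(\gamma)\chi(\gamma)
	=\tfrac12\sum_{\gamma\in\Gamma}\nu(\gamma)\big(\chi(\gamma)+\chi(\gamma)^{-1}\big)\ge1,
\end{align*}
by the elementary inequality $t+t^{-1}\ge2$, with equality precisely when $\chi(\gamma)=1$ for every $\gamma$ in $\supp\nu=\Gamma$. Combined with the previous bound, this forces $\chi\equiv1$; hence $h$ is $\Gamma$-invariant and $h|_X\equiv h(x_0)$. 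Finally, the resulting equality $\mu_{x_0}(h)=h(x_0)$ excludes the second alternative of \cref{lsh3}, so $h$ is swept by $F$ and $\mu_y(h)=h(y)$ for all $y\in M$; since $h|_X$ is constant and each $\mu_y$ is a probability measure on $X$, this yields $h(y)=h(x_0)$ for all $y$, i.e.\ $h$ is constant. I expect the main obstacle to be precisely this symmetrization step, which is where the choice of balanced $LS$-data — guaranteeing both the symmetry of $\nu$ and $\supp\nu=\Gamma$ — is indispensable; without symmetry the equality case of $t+t^{-1}\ge2$ is unavailable and $\chi$ need not be trivial.
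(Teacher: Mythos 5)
Your proposal is correct and follows essentially the same route as the paper's proof: minimality turns the boundedness assumption into a homomorphism $\chi\colon\Gamma\to\R_{>0}$, balanced $LS$-data (after reducing to the $L$-transient case) make the induced walk on $X\cong\Gamma$ symmetric, and the superharmonicity from \cref{lsh3} combined with the inequality $t+t^{-1}\ge2$ forces $\chi\equiv1$; your symmetrization by reindexing $\gamma\mapsto\gamma^{-1}$ is just a cleaner packaging of the paper's decomposition $\Gamma\setminus\ker c=B\cup B^{-1}$. The only cosmetic difference is the last step, where you use the dichotomy of \cref{lsh3} to conclude $h(y)=\mu_y(h)=h(x_0)$ directly, while the paper invokes the maximum principle after noting that $h$ attains its minimum along $X$.
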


\begin{proof}
If there is a constant $b(\gamma)$ such that $\gamma^*h/h\le b(\gamma)$,
then $\gamma^*h/h=c(\gamma)$ for some constant $c(\gamma)$.
Hence, if there is no $\gamma\in\Gamma$ such that $\gamma^*h/h$ is unbounded,
there is a map $c\colon\Gamma\to\R^+$ such that $\gamma^*h/h=c(\gamma)$ for all $\gamma\in\Gamma$.
Clearly, $c$ is a homomorphism.

We let $x_0\in M$ be a point with trivial isotropy group and normalize $h$ so that $h(x_0)=1$.
Then $h(\gamma x_0)=c(\gamma)$ for all $\gamma\in\Gamma$.

We may assume that $M$ is $L$-transient and choose balanced $LS$-data satisfying \ref{afv}.
Then the $LS$-measures on $X=\Gamma x_0$ are symmetric.
Setting $\nu(\gamma)=\mu_{x_0}(\gamma x_0)$,
we obtain a symmetric probability measure $\nu$ on $\Gamma$.

Since $\ker c$ contains all elements of $\Gamma$ of order two,
we can write $\Gamma\setminus\ker c=B\cup B^{-1}$ as a disjoint union.
Using that $h|_X$ is $\mu$-superharmonic, by \cref{lsh3}, and that $c$ is a homomorphism,
we get that
\begin{align*}
	0 &= c(1)-1 \\
	&\ge\nu(c)-1 \\
	&= \sum_{\gamma\in\Gamma} \nu(\gamma)(c(\gamma)-1) \\
	&= \sum_{\gamma\in B} (\nu(\gamma)c(\gamma)+\nu(\gamma^{-1})c(\gamma^{-1})-2) \\
	&= \sum_{\gamma\in B} \nu(\gamma)(c(\gamma)+c(\gamma)^{-1}-2)
	\ge 0
\end{align*}
and hence that $c=1$.
Therefore $h$ is constant on $X=\Gamma x_0$.
It now follows from \cref{lsh3} that $h$ attains a minimum along $X$.
But then $h$ is constant, by the maximum principle.
\end{proof}

\begin{theop}
If $M$ is $L$-recurrent mod $\Gamma$,
then any bounded $L$-harmonic function on $M$ is invariant under the $FC$-hypercenter of $\Gamma$.
In particular, if $\Gamma$ is $FC$-hypercentral, then any bounded $L$-harmonic function on $M$ is constant.
\end{theop}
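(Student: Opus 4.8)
The plan is to transport the question to the $\mu$-random walk on $X$ by $LS$-discretization and then settle it as a statement about bounded harmonic functions of a random walk on $\Gamma$. I would choose $LS$-data $(F_x,V_x)$ satisfying \ref{afv} (\cref{lsa}.\ref{lsaa}). By \cref{pohag} (the analogue of \cref{boha} for properly discontinuous actions), the restriction map $\mathcal H^\infty(M,L)\to\mathcal H^\infty(X,\mu)$ is a $\Gamma$-equivariant isomorphism; hence a bounded $L$-harmonic function $h$ on $M$ is invariant under $\gamma\in\Gamma$ if and only if $h|_X$ is. Identifying $X=\Gamma x_0$ with $\Gamma$ through the orbit map, \cref{lsa}.\ref{lsag} shows that the $\mu$-random walk becomes the right random walk on $\Gamma$ driven by a probability measure $\nu$ with $\supp\nu=\Gamma$, and $h|_X$ becomes a bounded $\nu$-harmonic function. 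It therefore suffices to prove that every bounded $\nu$-harmonic function $f$ on $\Gamma$ is invariant under the $FC$-hypercenter $\Gamma_{\lim}$; the second assertion is then immediate, since for $\Gamma=\Gamma_{\lim}$ the function $f$ is constant on the single orbit $\Gamma=\Gamma x_0$, whence $h$ is constant on $M$ again by \cref{pohag}.

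First I would run a transfinite induction along the upper $FC$-central series $(\Gamma_\alpha)$ of \eqref{fchc}, showing that every bounded $\nu$-harmonic function is $\Gamma_\alpha$-invariant. The case $\Gamma_0=\{e\}$ is vacuous, and for a limit ordinal $\alpha$ invariance under each $\Gamma_\beta$ with $\beta<\alpha$ gives invariance under $\Gamma_\alpha=\cup_{\beta<\alpha}\Gamma_\beta$. For a successor $\alpha=\beta+1$, a bounded $\nu$-harmonic function that is $\Gamma_\beta$-invariant descends to a bounded harmonic function on the quotient group $\bar\Gamma=\Gamma/\Gamma_\beta$ for the pushed-forward measure $\bar\nu$, which again has full support. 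Since $\Gamma_\alpha/\Gamma_\beta=FC(\bar\Gamma)$, the inductive step reduces the whole theorem to one core statement: for an adapted random walk on a group, every bounded harmonic function is invariant under the $FC$-center.

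The main work, and the main obstacle, is this $FC$-center statement, for which I would follow \cite{Li} and \cite{LZ}. Let $g$ lie in the $FC$-center, so its conjugacy class $C$ is finite, equivalently its centralizer has finite index in $\Gamma$. Writing $X_n=g_1\cdots g_n$ for the walk from the identity, both $f(X_n)$ and $f(gX_n)=f\bigl(X_n\,(X_n^{-1}gX_n)\bigr)$ are bounded martingales converging almost surely, and since $X_n^{-1}gX_n$ ranges over the finite set $C$, left translation by $g$ along the walk amounts to a uniformly bounded right perturbation by elements of $C$. The delicate point is to show that these perturbations do not affect the almost sure limit, so that the boundary values of $f$ and of $x\mapsto f(gx)$ agree and hence $f(gx)=f(x)$; I would argue either by passing to the finite-index centralizer, where $g$ becomes central and the classical Choquet--Deny argument applies to the induced walk, or directly through a Choquet--Deny type averaging over the finite class $C$ together with the adaptedness of $\nu$, as in \cite{Li,LZ}. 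The two points I expect to require care are the descent in the successor step (checking, via normality of $\Gamma_\beta$ and full support of $\bar\nu$, that $\Gamma_\beta$-invariance genuinely makes $f$ a $\bar\nu$-harmonic function on $\bar\Gamma$) and, above all, this passage to the limit controlling $f(X_nc_n)-f(X_n)$ for $c_n\in C$. Once the $FC$-center statement is established, the transfinite induction together with the $LS$-reduction of the first paragraph completes the proof.
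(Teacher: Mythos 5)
Your first paragraph is, almost verbatim, the paper's entire proof of this theorem: choose $LS$-data satisfying \ref{afv}, use \cref{lsa} to get a $\Gamma$-equivariant walk on $X\cong\Gamma$, restrict $h$ to $X$, apply the random-walk theorem \cref{fchc2}, and transport invariance back through the $\Gamma$-equivariant isomorphism $\mathcal{H}^\infty(M,L)\cong\mathcal{H}^\infty(X,\mu)$. The real content is therefore the random-walk statement, and there your route differs from the paper's: the paper proves \cref{fchc2} without ever passing to quotient groups, by taking $N$ to be the normal subgroup of all elements leaving every bounded $\nu$-harmonic function invariant, showing by a commutator trick that $FC(\Gamma/N)$ is trivial, and quoting Lin's lemma (\cref{fchc4}, i.e.\ \cite[Theorem 3.9]{Li}) as a black box; you instead run a transfinite induction with descent to $\Gamma/\Gamma_\beta$ (that step is fine: $\Gamma_\beta$ is normal and the pushed-forward measure retains full support) and reduce to the claim that bounded harmonic functions of an adapted walk on a group are invariant under the $FC$-center.

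That claim is true, but neither of your two suggested arguments for it is complete, and this is where the genuine gap sits. Route (b), averaging over the finite class $C$, is too vague to assess. Route (a) has a concrete hole: applying the central-element argument to the first-return walk on $\Gamma'=Z_\Gamma(g)$ is legitimate ($f|_{\Gamma'}$ is harmonic for that walk by optional stopping, and the induced measure inherits full support), but it only yields $f(g\gamma')=f(\gamma')$ for $\gamma'\in\Gamma'$, while your induction needs $f(gx)=f(x)$ for all $x\in\Gamma$; for $x=c\gamma'$ in another left coset one gets $f(gx)=f\bigl(c(c^{-1}gc)\gamma'\bigr)$, and $c^{-1}gc\in C$ need neither lie in nor be central in $\Gamma'$, so invariance on $\Gamma'$ does not formally propagate. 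The hole can be plugged: $F=f(g\,\cdot)-f$ is bounded $\nu$-harmonic and vanishes on $\Gamma'$; since the right walk projects to an irreducible Markov chain on the finite coset space $\Gamma'\backslash\Gamma$, it visits $\Gamma'$ infinitely often almost surely from any starting point, so martingale convergence gives $F(x)=E_x\bigl[\lim_n F(X_n)\bigr]=0$ everywhere. Some such argument --- or an honest citation of \cite{Li,LZ}, which is exactly how the paper handles the same difficulty via \cref{fchc4} --- is needed to finish; as written, the crux of your proof, the step you yourself flag as delicate, is left open.
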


\begin{proof}[Proof of \cref{fchcin}]
Choose $X=\Gamma x_0$ as above and $LS$-data satisfying \ref{afv}.
Then $X$ and the associated family $\mu$ of $LS$-measures are $\Gamma$-invariant.
Let $h$ be a bounded $L$-harmonic function on $M$.
Then $h|_X$ is $\mu$-harmonic and therefore invariant under the $FC$-hypercenter of $\Gamma$,
by \cref{fchc2}.
But then $h$ is also invariant under the $FC$-hypercenter of $\Gamma$, by \cref{boha}'.\ref{bohab}.
If  $\Gamma$ is $FC$-hypercentral, then $h$ is constant,
again by \cref{fchc2} and \cref{boha}'.\ref{bohab}.
\end{proof}

\begin{theop}\label{gamam}
If $M$ is $L$-recurrent mod $\Gamma$,
then there is a $\Gamma$-invariant bounded projection $L^\infty(M)\to\mathcal{H}^\infty(M,L)$.
In particular, if  all bounded $L$-harmonic functions on $M$ are constant,
then $\Gamma$ is amenable.
\end{theop}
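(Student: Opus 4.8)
The plan is to build the required projection directly from the diffusion semigroup on $M$, and then to deduce amenability by transporting the triviality of $\mathcal{H}^\infty(M,L)$ to the group $\Gamma$ via $LS$-discretization. For $f\in L^\infty(M)$ set $P_tf(y)=E_y[f\circ D_t]=\int_M p(t,y,z)f(z)\vf^2\dv(z)$. Since $(D_t)$ has infinite life time we have $P_t1=1$, so each $P_t$ is a positive, unital operator of norm one on $L^\infty(M)$; and since $L$ and $\vf$ are $\Gamma$-invariant, the kernel satisfies $p(t,\gamma y,\gamma z)=p(t,y,z)$, whence $P_t(\gamma^*f)=\gamma^*(P_tf)$ for all $\gamma\in\Gamma$. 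Fixing a shift-invariant mean $\operatorname{LIM}$ on the bounded functions of $t\in\R_{\ge0}$ (which exists because $\R_{\ge0}$ is amenable), I would define $Pf(y)=\operatorname{LIM}_t P_tf(y)$.

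Then $P$ inherits positivity, unitality, norm one, and $\Gamma$-equivariance from the $P_t$, and the point is to see that it is a projection onto $\mathcal{H}^\infty(M,L)$. First, $P_sP=P$ for all $s>0$: interchanging $P_s$ with $\operatorname{LIM}$ and using shift-invariance gives $P_sPf=\operatorname{LIM}_t P_{s+t}f=\operatorname{LIM}_t P_tf=Pf$. A bounded $u$ with $P_su=u$ for all $s>0$ is smooth, because $u=P_su$ and the heat kernel is smooth, and then $0=\partial_s(P_su)=L(P_su)=Lu$, so $Pf$ is $L$-harmonic. Second, if $h\in\mathcal{H}^\infty(M,L)$ then $h\circ D_t$ is a bounded local martingale (localizing the martingale \eqref{fx} as in the proof of \cref{swel}), hence a genuine martingale, so $P_th=h$ and therefore $Ph=\operatorname{LIM}_t h=h$. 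Thus $P\colon L^\infty(M)\to\mathcal{H}^\infty(M,L)$ is a $\Gamma$-invariant bounded projection.

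For the amenability assertion, assume that all bounded $L$-harmonic functions on $M$ are constant. Choose $LS$-data satisfying \ref{afv} and put $X=\Gamma x_0$; by \cref{lsa}.\ref{lsag} the orbit map identifies $X$ with $\Gamma$ and the $\mu$-random walk with the $\nu$-random walk of a probability measure $\nu$ on $\Gamma$ with $\supp\nu=\Gamma$. On $\ell^\infty(X)$ I would repeat the construction in discrete time: with a shift-invariant mean $\operatorname{LIM}$ on $\ell^\infty(\N)$, set $Qf(x)=\operatorname{LIM}_n\mu^n_x(f)$. Exactly as above, and using \cref{lsm2}.\ref{lsm2b} for equivariance, $Q$ is a $\Gamma$-equivariant positive unital projection of $\ell^\infty(X)$ onto $\mathcal{H}^\infty(X,\mu)$. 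By \cref{harmp} we have $\mathcal{H}^\infty(X,\mu)\cong\mathcal{H}^\infty(M,L)$, which is trivial by hypothesis, so $\mathcal{H}^\infty(X,\mu)$ consists of constants and $Q$ is a left-invariant mean on $\ell^\infty(\Gamma)$. Hence $\Gamma$ is amenable. (Equivalently, $P$ itself is a $\Gamma$-invariant mean on $L^\infty(M)$, which pushes forward to an invariant mean on $\ell^\infty(\Gamma)$ along any $\Gamma$-equivariant embedding $\ell^\infty(\Gamma)\hookrightarrow L^\infty(M)$; I prefer the route through $Q$ since it avoids assuming the action is free.)

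The step I expect to be the main obstacle is the two interchanges of the finitely additive mean $\operatorname{LIM}$ with the integral operator $P_s$ (respectively with the sum defining $\mu_x$). These are not automatic for a merely finitely additive mean, but they are justified by approximating the probability measures $p(s,y,\cdot)\vf^2\dv$ and $\mu_x$ by finitely supported masses, the tails being bounded uniformly in $t$ (respectively $n$) by $\|f\|_\infty$. The remaining delicate point is the regularity fact that a bounded heat-semigroup-invariant function is $L$-harmonic, together with its converse via the bounded-local-martingale argument; both rely on the standing assumption that $(D_t)$ has infinite life time.
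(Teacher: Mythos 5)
Your second half (amenability) is exactly the paper's route: the paper first proves \cref{maram}, whose proof constructs precisely your $Q$ --- namely $\hat m(f)(x)=m\bigl(k\mapsto\mu^k_x(f)\bigr)$ for an invariant mean $m$ on $L^\infty(\N_0)$, with the interchange of mean and sum justified by truncating $\mu_x$ to finite sets and a uniform tail bound --- and then deduces amenability from the triviality of $\mathcal{H}^\infty(X,\mu)\cong\mathcal{H}^\infty(M,L)$. For the projection on $L^\infty(M)$, however, the paper proceeds quite differently and much more briefly: it sets $Hf$ equal to the unique bounded $L$-harmonic function on $M$ whose restriction to $X$ is $A(f|_X)$, where $A$ is the projection of \cref{maram} and the extension exists and is unique by the isomorphism of \cref{harmp}; $\Gamma$-invariance comes from the equivariance of the $LS$-measures (\cref{lsa}). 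Your construction $Pf=\operatorname{LIM}_t P_tf$ by averaging the diffusion semigroup in continuous time is essentially the original argument of \cite{LS} for their Theorem 3', which the authors explicitly chose not to reproduce (``we remain in our setup and invoke $LS$-discretization''). So for the first assertion you take a genuinely different route; it avoids passing through the discretization, but it is analytically more demanding, and that is where your proposal has a gap.

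The gap is the justification of the interchange $P_s\circ\operatorname{LIM}_t=\operatorname{LIM}_t\circ P_s$ (and, implicitly, the measurability of $Pf$). Approximating $p(s,y,\cdot)\,\vf^2\dv$ by finitely supported masses with uniformly controlled tails works for the discrete sum --- that is literally the computation in the paper's proof of \cref{maram} --- but it fails verbatim for a diffuse measure, which is at total-variation distance $1$ from every finitely supported one. After cutting to a compact set $K$ (which does handle the tail), you still must control $\sum_i\int_{A_i}p(s,y,z)\,|P_tf(z)-P_tf(z_i)|\,\vf^2\dv(z)$, i.e.\ the oscillation of $P_tf$ on small sets, \emph{uniformly in $t$}; for a general uniformly bounded family of measurable functions this is false. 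What rescues the argument is parabolic smoothing: by shift-invariance of the mean one may replace $P_tf$ by $P_{t+1}f=P_1(P_tf)$, and $P_1$ maps the unit ball of $L^\infty$ into a locally equicontinuous family (local gradient estimates for $L$). The same smoothing is needed even earlier, because a pointwise finitely additive mean of measurable functions need not be measurable, so without it the expression $P_s(Pf)$ is not even defined. Alternatively, define $Pf$ weak-$*$ by $\la Pf,g\ra=\operatorname{LIM}_t\la P_tf,g\ra$ for $g\in L^1(M,\vf^2\dv)$; then measurability is automatic, $P_s$ preserves $L^1$ by symmetry of $p$ and stochastic completeness, and the interchange becomes $\la P_sPf,g\ra=\operatorname{LIM}_t\la P_{t+s}f,g\ra=\la Pf,g\ra$. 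With either repair your first part is correct; or you can sidestep the issue entirely by the paper's two-line argument: project $f|_X$ with your $Q$ and extend back through \cref{harmp}.
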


\begin{proof}
For $f\in L^\infty(M)$, let $f'=A(\tilde{f}|_X)$,
where $A\colon L^\infty(X)\to\mathcal{H}^\infty(X,\mu)$ is the projection from \cref{maram} and $\tilde{f}(x) = E_x[f \circ X_1]$.
Then $f'$ is the restriction of a unique bounded $L$-harmonic function $Hf$ on $M$,
and $H$ is the required projection.
\end{proof}

\section{Random walks and harmonic functions}
\label{secrwg}

In what follows, we consider \emph{random walks}, that is, Markov chains on countable sets.
Let $\nu$ be a random walk on $Y$,
given by a family $(\nu_y)_{y\in Y}$ of probability measures on $Y$.
We let $(P_y)_{y\in Y}$ be the associated family of probability measures
on the sample space $\Omega=Y^{\N_0}$.
We say that $\nu$ is \emph{symmetric} if
\begin{align}
	\nu_y(z)=\nu_z(y) \quad\text{for all $y,z\in Y$.}
\end{align}
We say that $\nu$ is \emph{irreducible} if, for all $y,z\in Y$,
there exist $y_1,\dots,y_k\in Y$ such that
\begin{align}\label{nui}
  \nu_y(y_1)\nu_{y_1}(y_2)\dots\nu_{y_{k-1}}(y_k)\nu_{y_k}(z) > 0.
\end{align}
For $k\ge2$,
we define a family $\nu^k=(\nu^k_y)_{y\in Y}$ of probability measures on $Y$ by
\begin{align}\label{nuk}
  \nu^k_y(z) = \sum_{x\in Y}\nu_y(x)\nu^{k-1}_x(z),
\end{align}
where $\nu^1=\nu$.
Irreducibility is then equivalent to the property that, for all $y,z\in Y$,
\begin{align}
	\nu^k_y(z)>0 \quad\text{for some $k\ge1$.} 
\end{align}
If $\nu$ is symmetric, then also $\nu^k$, for any $k\ge2$.

We assume that a countable group $\Gamma$ acts on $Y$ and that $\nu$ is $\Gamma$-invariant,
that is, that we have
\begin{align}\label{gamin}
  \nu_{\gamma y}(\gamma z)=\nu_y(z)
\end{align}
for all $y,z\in Y$ and $\gamma\in\Gamma$.
Then the $\nu^k$ are also $\Gamma$-invariant.

For $X\subseteq Y$ and $\omega\in Y^{\N_0}$,
we let $R^X(\omega)=\inf\{k\ge1\mid\omega_k\in X\}$.
We say that $X$ is \emph{$\nu$-recurrent} if $P_y[R^X(\omega)<\infty]=1$ for any $y\in Y$.
Then the family $\mu=(\mu_y)_{y\in Y}$ of hitting probabilities,
 \begin{align}\label{hitx}
	\mu_y(x)
	= P_y[\omega(R^X(\omega))=x]
	= \sum_{i\ge0}\sum_{y_1,\dots,y_{i}\in Y\setminus X}\nu_y(y_1)\nu_{y_1}(y_2)\dots\nu_{y_{i}}(x),
\end{align}
where $y\in Y$ and $x\in X$, are probability measures on $X$
and define a random walk with sample space $\Omega=Y\times X^\N$.
By the definition of $\mu$, we have
\begin{align}\label{hitx2}
  \mu_y(x) = \nu_y(x) + \sum_{z\in Y\setminus X} \nu_y(z)\mu_z(x)
\end{align}
for all $y\in Y$ and $x\in X$.
By the $\nu$-recurrence of $X$,
the space of sequences in $Y^{\N_0}$ which have infinitely many of its members in $X$  
has full $P_y$-measure for all $y\in Y$,
and the subsequences consisting of the corresponding starting members and members belonging to $X$ will be called the \emph{$X$-subsequences}.

\begin{prop}\label{balmu}
Let $X\subseteq Y$ be a $\Gamma$-invariant $\nu$-recurrent subset.
Then the family $\mu$ of hitting probabilities has the following properties:
\begin{enumerate}
\item\label{balmui}
$\mu$ is $\Gamma$-invariant.
\item\label{balmus} 
$\mu$ is symmetric on $X$ if $\nu$ is symmetric.
\item\label{balmug}
The Green functions $g$ of $\mu$ and $G$ of $\nu$ satisfy
\begin{align*}
	g(y,x)=G(y,x) \quad\text{for all $y\in Y$ and $x\in X$.}
\end{align*}
In particular,
the $\mu$-random walk on $X$ is transient if and only if the $\nu$-random walk on $Y$ is transient.
\end{enumerate}
\end{prop}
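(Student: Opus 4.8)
The plan is to treat the three assertions in turn, working throughout from the path expansion \eqref{hitx}, which writes $\mu_y(x)$ as a sum over finite $\nu$-paths $y=y_0,y_1,\dots,y_i,y_{i+1}=x$ whose interior vertices $y_1,\dots,y_i$ all lie in $Y\setminus X$, each weighted by the product $\nu_{y_0}(y_1)\cdots\nu_{y_i}(x)$. For the $\Gamma$-invariance assertion I would use that both $\nu$ and $X$ are $\Gamma$-invariant: applying a fixed $\gamma\in\Gamma$ to every vertex sends the paths from $y$ to $x$ with interior in $Y\setminus X$ bijectively to the paths from $\gamma y$ to $\gamma x$ with interior in $Y\setminus X$, and by \eqref{gamin} the $\nu$-weight of a path equals that of its image, so summing gives $\mu_{\gamma y}(\gamma x)=\mu_y(x)$. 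For the symmetry assertion, when $\nu$ is symmetric and $x,x'\in X$, reversing a path from $x$ to $x'$ produces a path from $x'$ to $x$ with the same interior constraints and, by symmetry of $\nu$, the same weight; since reversal is an involution of the indexing set, the two path sums agree and $\mu_x(x')=\mu_{x'}(x)$.

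The substantive point is the Green function identity $g(y,x)=G(y,x)$. Here I would realize the $\mu$-walk as the trace of the $\nu$-walk on $X$: starting the $\nu$-walk at $y\in Y$ and recording its successive visits to $X$ produces, by the strong Markov property together with the definition of $\mu$ in \eqref{hitx} as the hitting distribution on $X$, a process distributed exactly as the $\mu$-walk started at $y$, and the $\nu$-recurrence of $X$ guarantees that this $X$-subsequence is almost surely infinite. Since the target $x$ lies in $X$, every visit of the full $\nu$-path to $x$ occurs at a time when the path is in $X$ and is therefore recorded in the $X$-subsequence, and conversely every occurrence of $x$ in the $X$-subsequence is a genuine visit of the full path. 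Hence the number of visits to $x$ is literally the same random variable for the $\nu$-walk and for the induced $\mu$-walk, and taking $P_y$-expectations yields $g(y,x)=G(y,x)$, the initial Dirac contribution matching on both sides (and vanishing when $y\notin X$).

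Finally, for the transience dichotomy I would exploit that the identity just proved holds on all of $Y\times X$, in particular on $X\times X$, so that finiteness of the Green function on $X\times X$ is literally the same condition for both walks. Combined with the $\nu$-recurrence of $X$, which forces every point of $Y$ to communicate with $X$, finiteness of $G$ on $X\times X$ is equivalent to transience of the $\nu$-walk, while finiteness of $g$ on $X\times X$ is equivalent to transience of the $\mu$-walk; these two conditions coincide. The main obstacle is not any single estimate but the clean justification that the $X$-subsequence of the $\nu$-walk is genuinely a copy of the $\mu$-walk and that visit counts transfer exactly. This rests on the strong Markov property and on $\nu$-recurrence of $X$, and it requires a little care in the treatment of the starting point $y$, especially when $y\notin X$, where both sides lose their initial term.
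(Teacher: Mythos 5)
Your proposal is correct, and for parts \eqref{balmui} and \eqref{balmus} it simply spells out what the paper dismisses as ``clear'' and ``immediate from \eqref{hitx}'' (the path bijections under $\gamma$ and under path reversal). For part \eqref{balmug}, however, your route differs in execution from the paper's: you argue probabilistically, invoking the strong Markov property to identify the $X$-subsequence of the $\nu$-walk with the $\mu$-walk and then observing that the number of visits to a point $x\in X$ is pathwise the same random variable for both processes, so that taking $P_y$-expectations gives $g(y,x)=G(y,x)$. The paper instead proves the identity by a purely formal series rearrangement: it expands $G(y,x)$ as $\delta_y(x)+\sum_{i\ge0}\sum_{y_1,\dots,y_i\in Y}\nu_y(y_1)\cdots\nu_{y_i}(x)$ and regroups the (non-negative) terms according to the successive visits of each path to $X$, whereupon \eqref{hitx} converts each block into a product of $\mu$-transition probabilities, yielding $g(y,x)$ directly. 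The two arguments encode the same idea---the $\mu$-walk is the trace of the $\nu$-walk on $X$---but the paper's version is a term-by-term Tonelli rearrangement that needs no almost-sure statements and, in fact, no recurrence for the identity itself (recurrence only ensures the $\mu_y$ are genuine probability measures), whereas your version requires the a.s.\ infinitude of the $X$-subsequence and a careful statement of the strong Markov property; in exchange, yours makes transparent the later substitution of ``$LS$-path discretization'' by ``passage to $X$-subsequences'' in the discrete analogues of the main theorems. Your treatment of the transience dichotomy (finiteness of the Green functions on $X\times X$, upgraded to all of $Y$ via the standard fact that a recurrent state leading to $X$ would force a recurrent state inside $X$) is more detailed than the paper's, which leaves this step implicit, and it is sound.
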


\begin{proof}
\eqref{balmui} is clear,
and \eqref{balmus} follows immediately from \eqref{hitx}.
As for \eqref{balmug}, we have
\begin{align*}
	G(y,x)
	&= \delta_y(x) + \sum_{i\ge1} P_y[\omega_i=x] \\
	&= \delta_y(x) + \sum_{i\ge0}\sum_{y_1,\dots,y_{i}\in Y}\nu_y(y_1)\nu_{y_1}(y_2)\dots\nu_{y_{i}}(x) \\ 
	&= \delta_y(x) + \sum_{j\ge0}\sum_{x_1,\dots,x_{j}\in X}\mu_y(x_1)\mu_{x_1}(x_2)\dots\mu_{x_{j}}(x) \\ 
	&= g(y,x),
\end{align*}
where we use \eqref{hitx} to pass from the second to the third line.
\end{proof}

We say that the action of $\Gamma$ on $Y$ is
\emph{cofinite} if $\Gamma\backslash Y$ is finite.
We say that $Y$ is \emph{$\nu$-recurrent mod $\Gamma$} if the orbits of $\Gamma$ in $Y$ are $\nu$-recurrent.

Recall that a function $h$ on $X$ is \emph{$\mu$-harmonic} if it satisfies \eqref{muharm} for all $y\in X$.
Similarly, a function $h$ on $Y$ is \emph{$\nu$-harmonic} if it satisfies \eqref{muharm} for all $y\in Y$,
where $Y$ is substituted for $X$ and $\nu$ for $\mu$.
If $h$ is $\mu$-harmonic, then $h$ is $\mu^k$-harmonic for all $k\ge1$,
and similarly for $\nu$-harmonic functions.

\begin{theadpp}\label{pohar}
Substituting $Y$ for $M$ and $\nu$ for $L$ and letting $X$ be an orbit of $\Gamma$ in $Y$,
the assertions of Theorems \ref{boha} and \ref{trec} hold if $Y$ is $\nu$-recurrent mod $\Gamma$.
Here $LS$-path discretization in \cref{trec}.\ref{recp} has to be replaced by passage to $X$-subsequences.
\end{theadpp}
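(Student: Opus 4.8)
The plan is to follow the proofs of Theorems~\ref{poha}--\ref{trec} at the end of \cref{secdico}, replacing each ingredient from \cref{seclsd} by its random-walk counterpart. The hitting probabilities $\mu_y$ of \eqref{hitx} play the role of the $LS$-measures, and \cref{balmu} already supplies the analog of \cref{Greengreen}: since $X$ is an orbit, the recurrence hypothesis makes $X$ a $\nu$-recurrent subset, and \cref{balmu}.\ref{balmug} gives $g(y,x)=G(y,x)$ on $Y\times X$, so the $\mu$-walk on $X$ is transient exactly when the $\nu$-walk on $Y$ is, while \cref{balmu}.\ref{balmus} yields symmetry of $\mu$ when $\nu$ is symmetric. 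It therefore remains to produce analogs of the sweeping results \cref{swel}/\cref{lsh3}, of the restriction isomorphism \cref{harmp}, of the Martin-boundary Theorems \cref{martin1}, \cref{martin}, and \cref{harmc}, and of the Poisson-boundary Theorem \cref{lspo}.

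First I would settle the harmonic-function side. Since $X$ is $\nu$-recurrent we have $R^X<\infty$ almost surely and $\mu_y(h)=E_y[h(\omega(R^X))]$. For any $\nu$-harmonic $h$ the sequence $(h(\omega_k))_k$ is a martingale; optional stopping at $R^X$ gives $h(y)=\mu_y(h)$ when $h$ is bounded, and Fatou gives $\mu_y(h)\le h(y)$ when $h$ is positive, the equality-versus-strict dichotomy of \cref{lsh3} then following from irreducibility exactly as there. This yields \cref{boha}.\ref{bohar} and \cref{boha}.\ref{bohap}. For the inverse map I would extend a $\mu$-harmonic $f$ by $h(y):=\mu_y(f)$; the one-step identity \eqref{hitx2} shows at once that $h$ is $\nu$-harmonic, and $h|_X=f$ since $f$ is $\mu$-harmonic, while the sweeping relation forces injectivity. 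This is the analog of \cref{harmp} and gives \cref{boha}.\ref{bohab}; combined with \cref{balmu}.\ref{balmug} it also yields \cref{boha}.\ref{bohat}.

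Next I would treat the Martin boundaries, choosing the base point $x_0$ for both kernels inside $X$. The identity $g=G$ on $Y\times X$ makes the $\nu$- and $\mu$-Martin kernels coincide there, so the convex homeomorphism $\mathcal{M}(X,\mu)\to\mathcal{M}(Y,\nu)\cap\bar X$ (the analog of \cref{martin1}, hence \cref{trec}.\ref{recm}) follows as there. For the cofinite statements \cref{poha} and \cref{mami} I would distinguish two cases: if $Y$ is recurrent then positive harmonic functions are constant by \cref{swel4} and the claims are trivial, while if $Y$ is transient I would run the argument of \cref{martin}, using that finiteness of $\Gamma\backslash Y$ together with $\Gamma$-invariance and irreducibility of $\nu$ furnishes a uniform Harnack constant controlling Green-function ratios along orbits; this replaces the $*$-uniform data, after which the proofs of \cref{martin} and \cref{harmc} transfer verbatim to give the homeomorphism of minimal boundaries, the criterion \cref{mami}.\ref{mami2}, and the cone isomorphism of \cref{poha}.

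Finally, for the Poisson boundary (\cref{trec}.\ref{recp}) I would replace $LS$-path discretization by passage to $X$-subsequences: for a $\nu$-path $\omega$, the subsequence of its successive visits to $X$ is, by the strong Markov property and the very definition of $\mu$ in \eqref{hitx}, a $\mu$-random walk on $X$. This map is measure preserving, and since bounded $\nu$- and $\mu$-harmonic functions correspond under it by the restriction isomorphism above, it induces the $\Gamma$-equivariant isomorphism $\mathcal{P}(Y,\nu)\to\mathcal{P}(X,\mu)$ exactly as in \cref{lspo}; no auxiliary randomization over $[0,1]^\N$ is needed, which shortens the argument relative to the continuous setting. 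The step I expect to be the main obstacle is precisely the uniform Harnack estimate in the cofinite case: one must exploit the finiteness of $\Gamma\backslash Y$ and the $\Gamma$-invariance of $\nu$ to bound Green-function ratios uniformly along orbits so that the minimal-boundary argument of \cref{martin} applies. By contrast, the recurrent-case Theorems \cref{boha} and \cref{trec} require only the recurrence of the orbit $X$ and go through with no Harnack input at all.
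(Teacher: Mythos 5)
Your proposal follows essentially the same route as the paper's proof: the extension of $\mu$-harmonic functions by $h(y)=\mu_y(f)$ verified through the one-step identity \eqref{hitx2}, the Green-function identity of \cref{balmu} to match Martin kernels on $Y\times X$, a uniform Harnack-type constant obtained from a finite set of orbit representatives together with irreducibility and $\Gamma$-invariance in the cofinite case, and passage to $X$-subsequences in place of $LS$-path discretization for the Poisson boundary. The only point you gloss over is the well-definedness of the extension for \emph{unbounded} positive $\mu$-harmonic $f$, i.e.\ the finiteness of $\sum_{x\in X}\mu_y(x)f(x)$ for $y\in Y\setminus X$, which your Martin-boundary and positive-cone steps implicitly require and which the paper settles with the estimate $\mu_y(x)\le c\,\mu_{y_0}(x)$ of \cref{muest}, itself a short consequence of irreducibility and \eqref{hitx2}.
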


\begin{proof}
Given a $\mu$-harmonic function $h$ on $X$,
we want to show that it is the restriction of a $\nu$-harmonic function on $Y$.
To that end, we extend $h$ to $Y$ by setting
\begin{align}\label{hext}
h(y) = \sum_{x\in X} \mu_y(x)h(x). 
\end{align}
Since $h$ is $\mu$-harmonic on $X$, the extension agrees with the original $h$ on $X$.
However, we have to verify that $h$ is well defined on $Y\setminus X$, that is,
that the sum on the right hand side is finite for all $y\in Y\setminus X$.
Since $\mu_y$ is a probability measure on $X$ for all $y\in Y$,
this is clear if $h$ is bounded.

\begin{lem}\label{muest}
For any $y\in Y$, there exist $y_0\in X$ and a constant $c>0$ such that
\begin{align*}
	\mu_y(x)\le c\mu_{y_0}(x) \quad\text{for all $x\in X$.}
\end{align*}
\end{lem}

\begin{proof}
We can assume $y\in Y\setminus X$.
By the irreducibility of $\nu$,
there exist $i\ge0$, $y_0\in X$, and $y_1,\dots,y_i\in Y\setminus X$
such that $\nu_{y_0}(y_{1})\cdots\nu_{y_{i}}(y)>0$.
Using \eqref{hitx2}, we get
\begin{align*}
  \nu_{y_i}(y)\mu_{y}(x)
  \le \sum_{z\in Y\setminus X} \nu_{y_i}(z)\mu_{z}(x)
  = \mu_{y_i}(x) - \nu_{y_i}(x)
  \le \mu_{y_i}(x)
\end{align*}	
for all $x\in X$.	
Hence $\mu_{y}(x)\le\mu_{y_i}(x)/\nu_{y_i}(y)$.
Recursively, we get that
\begin{align}\label{muest2}
	\mu_{y}(x)\le\frac1{\nu_{y_i}(y)\cdots\nu_{y_0}(y_{1})}\mu_{y_0}(x)
\end{align}
for all $x\in X$.
\end{proof}

We continue with the proof of the theorem.
Therefore
\begin{align*}
	\sum_{z\in Y} \nu_y(z)h(z)
	&=  \sum_{x\in X.z\in Y} \nu_y(z)\mu_z(x)h(x) \\
	&= \sum_{x,z\in X} \nu_y(z)\mu_z(x)h(x) + \sum_{x\in X} (\mu_y(x)-\nu_y(x))h(x) \\
	&= \sum_{x\in X} \mu_y(x)h(x)
	= h(y),
\end{align*}
and hence the extension \eqref{hext} of $h$ is $\nu$-harmonic on $Y$. It follows by arguing as in Lemma \ref{muhar} that this is the unique positive $\nu$-harmonic extension of $h$, which we will denote by $Eh$. Keeping in mind the above modifications, the proofs of the analogs are similar to the proofs of the original ones.
\end{proof}

\begin{theopp}\label{discrweep}
Substituting $Y$ for $M$ and $\nu$ for $L$ and letting $X$ be an orbit of $\Gamma$ in $Y$, the assertions of Theorem \ref{poha} hold if $\Gamma\backslash Y$ is finite.
\end{theopp}

\begin{proof}
	It suffices to prove that $h(y) = \mu_y(h)$ for any $h \in \mathcal{H}^+(Y,\nu)$ and $y \in Y$. Let $h \in \mathcal{H}^+(Y,\nu)$. In view of (\ref{hitx2}), it is sufficient to show that $h(y) = \mu_y(h)$ for any $y \in Y \setminus X$. Choose a finite set $A$ of representatives of the $\Gamma$-orbits in $Y$ that do not intersect $X$. Since $X$ is recurrent and $A$ is finite, there exist $c > 0$ and $N \in \mathbb{N}$ such that for any $y \in A$ there exists $n(y) \leq N$, $y =: y_0,\dots,y_{n(y) -1} \in Y \setminus X$, and $\bar{y} := y_{n(y)} \in X$ such that
	\[
	\nu_{y}(y_1)\nu_{y_1}(y_2) \dots \nu_{y_{n(y) - 1}}(\bar{y}) \geq c.
	\]
	Since $\nu$ is irreducible and $A$ is finite, there exists $c^\prime > 0$ such that for any $y \in A$ there exists $k \in \mathbb{N}$ satisfying $\nu_{\bar{y}}^{k}(y) \geq c^\prime$.
	Keeping in mind that $\nu$-harmonic functions are also $\nu^{k}$-harmonic, we derive that $\varphi(\bar{y}) \geq c^\prime \varphi(y)$ for any $y \in A$ and any $\varphi \in \mathcal{H}^+(Y,\nu)$. Since $\nu$ is $\Gamma$-invariant, we readily see that $h_g = h \circ g$ is $\nu$-harmonic for any $g \in \Gamma$, and therefore, $h(g\bar{y}) \geq c^\prime h(g y)$ for any $y \in A$ and $g \in \Gamma$.
	
	For any $y \in Y\setminus X$, using that $h$ is $\nu$-harmonic recursively, we obtain that
	\begin{align*}
		h(y) &= \sum_{y_1 \in Y} \nu_y(y_1) h(y_1) = \sum_{y_1 \in X} \nu_y(y_1) h(y_1) + \sum_{y_1 \in Y \setminus X } \nu_y(y_1) h(y_1) \\
		&= \sum_{y_1 \in X} \nu_y(y_1) h(y_1) + \sum_{y_2 \in Y, y_1 \in Y \setminus X } \nu_y(y_1) \nu_{y_1}(y_2) h(y_2) = Q_n(y) + E_n(y)
	\end{align*}
	for any $n \in \mathbb{N}$, where
	\begin{align*}
		Q_n(y) &= \sum_{i=1}^n \sum_{y_i \in X , y_{i-1},\dots,y_1 \in Y \setminus X} \nu_y(y_1) \nu_{y_1}(y_2) \dots \nu_{y_{i-1}}(y_i) h(y_i),\\
		E_n(y) &= \sum_{y_n,\dots,y_1 \in Y \setminus X} \nu_y(y_1) \nu_{y_1}(y_2) \dots \nu_{y_{n-1}}(y_n) h(y_n).
	\end{align*}
	It is evident that $Q_n(y)$ is non-decreasing with respect to $n$, and thus, $E_n(y)$ is non-increasing with respect to $n$.
	From the definition of $\mu_y$, it suffices to prove that $E_n(y) \rightarrow 0$ for any $y \in Y \setminus X$.
	
	Bearing in mind that $y = g z$ for some $g \in \Gamma$ and $z \in A$, and that $N \geq n(z)$, we deduce that
	\[
	Q_{N}(y) \geq \nu_{gz}(gz_1)\dots\nu_{gz_{n(z)-1}}(g\bar{z}) h(g\bar{z}) \geq ch(g \bar{z}) \geq c c^\prime h(g z) = c c^\prime h(y).
	\]
	This, together with $h(y) = Q_N(y) + E_N(y)$, gives the estimate
	\[
	E_{N}(y) \leq (1-cc^\prime) h(y)
	\]
	for any $y \in Y \setminus X$. For any $y \in Y \setminus X$ and $k \in \mathbb{N}$, we compute
	\begin{eqnarray}
		E_{(k+1)N}(y) &=& \sum_{y_{(k+1)N},\dots,y_1 \in Y \setminus X} \nu_{y}(y_1) \dots \nu_{y_{(k + 1)N -1}}(y_{(k+1)N}) h(y_{(k+1)N}) \nonumber\\
		&=& \sum_{y_{kN},\dots,y_{1} \in Y \setminus X} \nu_{y}(y_1) \dots \nu_{y_{k N -1}}(y_{kN}) E_{N}(y_{kN}) \nonumber\\
		&\leq& (1-cc^\prime) E_{kN}(y), \nonumber
	\end{eqnarray}
	which yields that $E_{kN}(y) \leq (1-cc^\prime)^{k} h(y)$ for any $k \in \mathbb{N}$. Since $(E_n(y))_{n \in \mathbb{N}}$ is non-increasing, we conclude that $E_n(y) \rightarrow 0$ for any $y \in Y \setminus X$, as we wished.
\end{proof}

\begin{lem}\label{minMartin}
	If $\Gamma \backslash Y$ is finite and the $\nu$-random walk on $Y$ is transient, then $\partial_{\nu}^{\min} Y \subseteq \partial_\nu X$.
\end{lem}

\begin{proof}
	Let $\xi \in \partial_{\nu}^{\min} Y$. Then $K(\cdot,\xi) \in H^+(Y,\nu)$ and Theorem \ref{discrweep} yields that $R K (\cdot,\xi) \in H^+(X,\mu)$. As in the proof of Theorem \ref{mfx}, minimality of $K(\cdot,\xi)$ implies the minimality of $RK(\cdot,\xi)$. This yields that $R K(\cdot ,\xi) = k(\cdot,\eta)$ for some $\eta \in \partial_{\mu}^{\min}X$.
	We derive from Theorem D''(2) (that is, the analog of \cref{trec}(\ref{recm}))
	that there exists $\xi^\prime \in \partial_{\nu} X \cap H^+(Y,\nu)$ such that $RK(\cdot,\xi^\prime) = k(\cdot,\eta)$. Combining these, we deduce that $K(\cdot, \xi^\prime) = Ek(\cdot,\eta) = K(\cdot,\xi)$, which means that $\xi = \xi^\prime \in \partial_{\nu}X$.
\end{proof}

The analog of Theorem \ref{mami} in the setting of random walks is the following:

\setcounter{alphpp}{1}
\begin{theopp}
Suppose that $\Gamma \backslash Y$ is finite and that $\nu$-random walk on $Y$ is transient. Then:
\begin{enumerate}
	\item the $\mu$-random walk on $X$ is transient and  restriction of Martin kernels
	\begin{align*}
		R\colon\partial_\nu^{\min} Y \to \partial_\mu^{\min} X
	\end{align*}
	is a $\Gamma$-equivariant homeomorphism with inverse $E$.
	\item $\partial_{\nu} X = \partial_{\nu}^{\min} Y$ if and only if $\partial_{\mu} X = \partial_{\mu}^{\min} X$, and then $\partial_{\nu} Y \cap H^{+}(Y,\nu) = \partial_{\nu}^{\min} Y$
\end{enumerate}
\end{theopp}

\begin{proof}
Choose origin $x_0 \in X$ for Martin kernels associated to the $\nu$-random walk on $Y$ and the $\mu$-random walk on $X$.
	We know from Theorems \ref{discrweep} and D''(2) that	
	\[
	R\colon\partial_\nu X \cap \mathcal H^+(Y,\nu) \to \partial_\mu X\cap\mathcal H^+(X,\mu)
	\]
	is a $\Gamma$-equivariant homeomorphism with inverse $E$.
	As in the proof of Theorem \ref{mfx}, $R$ maps minimal positive $\nu$-harmonic functions of $Y$ surjectively to minimal positive $\mu$-harmonic functions on $X$.
	This means that $R$ restricts to a homeomorphism
	\[
	R \colon \partial_{\nu}^{\min} Y \cap \bar{X} \to \partial_{\mu}^{\min} X.
	\]
	The first part of the proof is completed by Lemma \ref{minMartin}.
	
	We know from Theorem D''(2) that restriction $R \colon \partial_{\nu} X \to \partial_{\mu}X$ is surjective. In view of (1), it is easily checked that $R^{-1}(\partial_{\mu}^{\min}X) = \partial_{\nu}^{\min}Y$. This readily implies the equivalence of the first two statements of (2).
	
	Assume now that $\partial_{\mu} X = \partial_{\mu}^{\min} X$. Let $\xi \in \partial_{\nu} Y \cap \mathcal{H}^+(Y,\nu)$ and consider a sequence $(y_n)_{n \in \mathbb{N}}$ in $Y$ with $y_n \rightarrow \xi$ in $\mathcal{M}(Y,\nu)$.
	Fix a finite set $A$ of representatives of the $\Gamma$-orbits in $Y$.
	After passing to a subsequence if necessary, we may suppose that there exist $z \in A$ and $(g_n)_{n \in \mathbb{N}} \subseteq \Gamma$ such that $y_n = g_n z$ for any $n \in \mathbb{N}$.
	Since the random walk on $Y$ induced by $\nu$ is irreducible, we know that there exists $x_0 \in A \cap X$, $k_1, k_2 \in \mathbb{N}$ and $c > 0$ such that $\nu^{k_1}_{y}(x_0) \geq c$ and $\nu^{k_2}_{x_0}(y) \geq c$. Considering the divergent sequence $x_n = g_n x_0$ of $X$, we have that $\nu_{y_n}^{k_1}(x_n) \geq c$ and $\nu_{x_n}^{k_2}(y_n) \geq c$ for any $n \in \mathbb{N}$, which yields that the Martin kernels satisfy $K(\cdot , y_n) \leq c^{-2} K(\cdot , x_n)$ pointwise, for sufficiently large $n \in \mathbb{N}$. It is evident that the same holds for their restrictions to $X$. After passing to a subsequence if necessary, we may assume that $x_n \rightarrow \eta \in \partial_{\mu} X$ in $\mathcal{M}(X,\mu)$. By assumption, $k(\cdot,\eta)$ is a minimal positive $\mu$-harmonic function. Keeping in mind that $k(\cdot,y_n) \rightarrow R K(\cdot , \xi) \in \mathcal{H}^+(X,\mu)$, by virtue of Theorem \ref{discrweep}, it follows from the aforementioned estimate that $k(\cdot,\eta) = R K(\cdot,\xi)$. We conclude that $K(\cdot,\xi) = E k(\cdot , \eta)$ which is a minimal positive $\nu$-harmonic function, and thus $\xi \in \partial_{\nu}^{\min} Y$.
\end{proof}

\begin{lem}\label{gamhom2}
If $\Gamma\backslash X$ is finite and $h$ is a positive $\mu$-harmonic function on $X$,
then $\gamma^*h/h$ is bounded for any $\gamma$ in the center of $\Gamma$.
\end{lem}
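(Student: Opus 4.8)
The plan is to derive the bound from a one-step Harnack-type estimate for positive $\mu$-harmonic functions, combined with the observation that centrality of $\gamma$ turns the displacement $x\mapsto\gamma x$ into the $\Gamma$-translate of one of only finitely many fixed displacements. First I would fix a finite set of representatives $x_1,\dots,x_m$ of the $\Gamma$-orbits in $X$, which exist since $\Gamma\backslash X$ is finite, and write each $x\in X$ as $x=\delta x_i$ with $\delta\in\Gamma$. The identity that does the work, valid precisely because $\gamma$ is central, is
\begin{align*}
	\gamma x = \gamma\delta x_i = \delta\gamma x_i,
\end{align*}
so that $\gamma x$ is obtained from $x$ by the $\delta$-translate of the fixed displacement $x_i\mapsto\gamma x_i$.

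Next I would record, for each $i$, a positive lower bound on the probability of travelling from $x_i$ to $\gamma x_i$. Since $\nu$ is irreducible on $Y$ and $\gamma x_i\in X$, \cref{balmu}.\ref{balmug} gives $g(x_i,\gamma x_i)=G(x_i,\gamma x_i)>0$, so there is a $k_i\ge1$ with $c_i:=\mu^{k_i}_{x_i}(\gamma x_i)>0$; set $c=\min_i c_i>0$. Because $\mu$, and hence each iterate $\mu^{k_i}$, is $\Gamma$-invariant (\cref{balmu}.\ref{balmui}), we have $\mu^{k_i}_{\delta x_i}(\delta\gamma x_i)=c_i$ for every $\delta\in\Gamma$. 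Now $h$ is positive and, being $\mu$-harmonic, also $\mu^{k_i}$-harmonic, so dropping all but one term in the averaging identity yields
\begin{align*}
	h(x)=h(\delta x_i)=\sum_{z\in X}\mu^{k_i}_{\delta x_i}(z)\,h(z)
	\ge \mu^{k_i}_{\delta x_i}(\delta\gamma x_i)\,h(\delta\gamma x_i)
	= c_i\,h(\gamma x),
\end{align*}
where the centrality identity is used in the last equality. Hence $(\gamma^*h/h)(x)=h(\gamma x)/h(x)\le 1/c_i\le 1/c$ for all $x\in X$, so $\gamma^*h/h$ is bounded above; applying the same argument to the central element $\gamma^{-1}$ then bounds it below away from $0$.

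The step I expect to carry the real content is the centrality identity $\gamma\delta=\delta\gamma$: it is what lets me replace the a priori varying displacement from $x$ to $\gamma x$ by the $\delta$-translate of one of the finitely many fixed displacements $x_i\mapsto\gamma x_i$, thereby making the constant $c$ uniform in $x$. For a non-central $\gamma$ this reduction breaks down, which is consistent with \cref{dichota}, where unboundedness of $\gamma^*h/h$ for some $\gamma$ is exactly the alternative to $h$ being constant. The remaining ingredients---irreducibility of the induced walk and $\Gamma$-invariance of the iterates $\mu^{k}$---are routine consequences of \cref{balmu} and the standing assumptions.
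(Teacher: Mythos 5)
Your proof is correct and follows essentially the same route as the paper's: centrality plus $\Gamma$-invariance make $\mu^k_x(\gamma x)$ constant along each $\Gamma$-orbit, finiteness of $\Gamma\backslash X$ yields a uniform positive lower bound $c$, and dropping all but one term in the $\mu^k$-harmonicity identity gives $h(x)\ge c\,h(\gamma x)$. The only (harmless) deviations are that you allow a separate exponent $k_i$ for each orbit representative---which in fact sidesteps the paper's implicit assumption that a single $k$ serves all representatives at once---and that you append the unnecessary lower bound obtained by applying the argument to $\gamma^{-1}$.
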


\begin{proof}
Let $\gamma$ belong to the center of $\Gamma$.
For all $a\in\Gamma$, $x\in X$, and $k\ge1$, we then have
\begin{align*}
  \mu^k_{ax}(\gamma ax)=\mu^k_{ax}(a\gamma x) = \mu^k_x(\gamma x).
\end{align*}
Hence if $c>0$ and $k\ge1$ are such that $\mu^k_x(\gamma x)\ge c$
for all $x$ in a finite set of representatives of the orbits of $\Gamma$ in $X$,
then $\mu^k_x(\gamma x)\ge c>0$ for all $x\in X$.
With such $c$ and $k$, we obtain
\begin{align*}
  h(x)
  = \sum_{y\in X} \mu^k_x(y)h(y)
  \ge \mu^k_x(\gamma x)h(\gamma x)
  \ge c h(\gamma x)
\end{align*}
for any $x\in X$, and then $\gamma^*h/h\le1/c$.
\end{proof}

Together with the $\Gamma$-invariance, symmetry of $\nu$ gives
 \begin{align}\label{musym}
   \nu_x(\gamma^{-1}x) = \nu_{\gamma x}(x) = \nu_x(\gamma x),
 \end{align}
 for all $x\in Y$ (and similarly for $\mu$ on $X$).

\setcounter{alphpp}{4}
\begin{theopp}\label{dichot2}
Suppose that $\nu$ is symmetric and that $Y$ is $\nu$-recurrent mod $\Gamma$.
Let $h$ be a minimal positive $\nu$-harmonic function on $Y$.
Then either $h$ is constant or there is a $\gamma\in\Gamma$ such that $\gamma^*h/h$ is unbounded.
\end{theopp}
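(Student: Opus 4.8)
The plan is to follow the proof of \cref{dichota} almost verbatim, substituting the hitting measure $\mu$ on an orbit for the diffusion balayage and an elementary supermartingale estimate for the sweeping inequality of \cref{lsh3}. Aiming at the first alternative of the dichotomy, suppose that $\gamma^*h/h$ is bounded for every $\gamma\in\Gamma$. Since $\nu$ is $\Gamma$-invariant, each $\gamma^*h$ is again a positive $\nu$-harmonic function, and from a bound $\gamma^*h\le b(\gamma)h$ the nonnegative $\nu$-harmonic function $h-b(\gamma)^{-1}\gamma^*h$ lies below $h$; minimality of $h$ then forces it to be a multiple of $h$, so $\gamma^*h=c(\gamma)h$ for some constant $c(\gamma)>0$. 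The cocycle identity $(\gamma\delta)^*h=\delta^*\gamma^*h$ shows that $c\colon\Gamma\to\R^+$ is a homomorphism; if instead $\gamma^*h/h$ is unbounded for some $\gamma$, we are in the second alternative and done.

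First I would fix, as in the proof of \cref{dichota}, a point $x_0$ with trivial isotropy, put $X=\Gamma x_0$, and normalize $h(x_0)=1$, so that $h(\gamma x_0)=c(\gamma)$. Let $\mu$ be the family of hitting probabilities of \eqref{hitx}; it is $\Gamma$-invariant, and since $\nu$ is symmetric it is symmetric on $X$ by \cref{balmu}. As $X$ is $\nu$-recurrent and $(h\circ\omega_k)_k$ is a nonnegative $P_x$-martingale, applying Fatou's lemma at the almost surely finite stopping time $R^X$ gives $\mu_x(h)\le h(x)$ for all $x\in X$; that is, $h|_X$ is $\mu$-superharmonic. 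Through the orbit map I identify $X$ with $\Gamma$ and set $\sigma(\gamma)=\mu_{x_0}(\gamma x_0)$, a symmetric probability measure on $\Gamma$.

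The heart of the matter is the symmetrization already exploited in \cref{dichota}. Superharmonicity at $x_0$ reads
\[
  1=c(e)\ \ge\ \mu_{x_0}(h)=\sum_{\gamma\in\Gamma}\sigma(\gamma)\,c(\gamma),
\]
so that $\sum_{\gamma}\sigma(\gamma)\bigl(c(\gamma)-1\bigr)\le 0$. Reindexing by $\gamma\mapsto\gamma^{-1}$ and using $\sigma(\gamma^{-1})=\sigma(\gamma)$ together with $c(\gamma^{-1})=c(\gamma)^{-1}$, I would rewrite this sum symmetrically as
\[
  \sum_{\gamma\in\Gamma}\sigma(\gamma)\bigl(c(\gamma)-1\bigr)
  =\tfrac12\sum_{\gamma\in\Gamma}\sigma(\gamma)\bigl(c(\gamma)+c(\gamma)^{-1}-2\bigr)\ \ge\ 0,
\]
the final inequality because $t+t^{-1}\ge 2$ for every $t>0$. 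Hence the sum vanishes and $c(\gamma)=1$ whenever $\sigma(\gamma)>0$. Since $\mu$ is irreducible, the support of $\sigma$ generates $\Gamma$, and as $c$ is a homomorphism I conclude $c\equiv 1$.

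It remains to deduce that $h$ is constant. Now $c\equiv 1$ means $\gamma^*h=h$ for all $\gamma$, so $h$ is $\Gamma$-invariant and descends to a positive harmonic function for the random walk induced on $\Gamma\backslash Y$; this quotient walk is recurrent precisely because $Y$ is $\nu$-recurrent mod $\Gamma$, whence the descended function, and therefore $h$, is constant by \cref{swel4}. The steps I expect to demand the most care are exactly the two that differ from \cref{dichota}: verifying that the one-step support of $\sigma$ generates $\Gamma$ — which must be extracted from irreducibility of $\nu$, since the hitting measure, unlike an $LS$-measure, need not have full support on $X$ — and checking that the walk induced on $\Gamma\backslash Y$ is well defined and recurrent so that \cref{swel4} applies.
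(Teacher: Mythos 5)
Your proposal follows the paper's own argument for \cref{dichot2} essentially step by step: boundedness of each $\gamma^*h/h$ plus minimality yields a homomorphism $c\colon\Gamma\to\R_{>0}$ with $\gamma^*h=c(\gamma)h$; the hitting measure $\mu$ of the orbit is $\Gamma$-invariant and symmetric by \cref{balmu}; superharmonicity of $h$ along the orbit (the paper quotes an analog of \cref{lsh3}; your optional-stopping/Fatou argument is a perfectly good substitute), combined with the symmetrization $c(\gamma)+c(\gamma)^{-1}\ge2$, forces $c=1$ wherever the hitting measure is positive; and recurrence mod $\Gamma$ then forces $h$ to be constant. Your two variations in the endgame are harmless: where the paper passes to the powers $\mu^k$ to reach every $\gamma\in\Gamma$, you use that $c$ is a homomorphism equal to $1$ on a generating set; both rest on irreducibility of $\mu$, which does follow from irreducibility of $\nu$ by splitting $\nu$-paths at their visits to $X$, as you anticipate.

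There is, however, one concrete gap: your very first step, ``fix a point $x_0$ with trivial isotropy,'' is not available in this setting. In \cref{dichota} such a point exists because the action there is properly discontinuous and effective on a manifold; in \cref{dichot2} the action of $\Gamma$ on the countable set $Y$ is arbitrary, and a free point may not exist. For instance, let $\Gamma$ be the infinite dihedral group acting on $Y=\Z$ and $\nu$ the simple random walk: $\nu$ is symmetric, $\Gamma$-invariant, irreducible, the action is cofinite, yet every point of $\Z$ is fixed by some reflection. When the isotropy group $\Gamma_0$ of $x_0$ is nontrivial, your identification of $X$ with $\Gamma$ -- and hence the definition of $\sigma$ as a symmetric probability measure on $\Gamma$ -- breaks down: pulling $\mu_{x_0}$ back to $\Gamma$ counts each point of $X$ with multiplicity $|\Gamma_0|$, and inversion does not descend to a well-defined involution of $X\cong\Gamma/\Gamma_0$ (if $\gamma x_0=\delta x_0$, the points $\gamma^{-1}x_0$ and $\delta^{-1}x_0$ may differ). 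The paper's proof takes an arbitrary $x_0$ and performs the symmetrization after lifting the sum from $X$ to $\Gamma$, which introduces the normalizing factor $1/(2|\Gamma_0|)$ but otherwise runs exactly as your computation; what makes the lift work is that $c|_{\Gamma_0}=1$, that $\mu_{x_0}(\gamma x_0)$ and $c(\gamma)$ depend only on the coset $\gamma\Gamma_0$, and that $\mu_{x_0}(\gamma^{-1}x_0)=\mu_{x_0}(\gamma x_0)$ by \eqref{musym}. So your argument is correct under the additional hypothesis that a free point exists, but it needs this repair to cover the theorem as stated.
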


\begin{proof}
Let $x_0\in Y$ and set $X=\Gamma x_0$.
Normalize $h$ so that $h(x_0)=1$
and assume that $\gamma^*h/h$ is bounded for all $\gamma\in\Gamma$.
Then $c(\gamma)=\gamma^*h/h$ is a constant for all $\gamma\in\Gamma$.
Clearly, $c\colon\Gamma\to\R_{>0}$ is a homomorphism.
Using that $h|_X$ is $\mu$-superharmonic (by an analog of \cref{lsh3}) and \eqref{musym}, we get
\begin{align*}
	0 &= h(x_0)-1 \\
	&\ge \mu_{x_0}(h)-1 \\
	&= \sum_{x\in X} \mu_{x_0}(x)(h(x)-1) \\
	&= \frac1{2|\Gamma_0|}\sum_{\gamma\in\Gamma} (\mu_{x_0}(\gamma x_0)h(\gamma x_0)+\mu_{x_0}(\gamma^{-1}x_0)h(\gamma^{-1}x_0)-2) \\
	&= \frac1{2|\Gamma_0|}\sum_{\gamma\in\Gamma} \mu_{x_0}(\gamma x_0)(c(\gamma)+c(\gamma)^{-1}-2)
	\ge 0,
\end{align*}
where $\Gamma_0$ denotes the isotropy group of $x_0$.
It follows that $c(\gamma)=1$ for all $\gamma\in\Gamma$ with $\mu_{x_0}(\gamma x_0)>0$.
By passing to $\mu^k$ for an appropriate $k$ if necessary,
the latter can be achieved for each $\gamma\in\Gamma$.
Hence $h$ is invariant under $\Gamma$.
But then $h$ is constant since $Y$ is $\mu$-recurrent mod $\Gamma$.
\end{proof}

If $\Gamma$ is Abelian and the action of $\Gamma$ on $Y$ is cofinite,
then $\gamma^*h/h$ is bounded for all $\gamma\in\Gamma$, by \cref{gamhom2}.
Hence \cref{dichot2} has the following consequence.

\begin{cor}\label{dichot4}
If $\Gamma$ is Abelian, $\nu$ is symmetric, and the action of $\Gamma$ on $Y$ is cofinite
then any positive $\nu$-harmonic function on $Y$ is constant.
\end{cor}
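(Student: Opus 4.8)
The plan is to read the statement off from \cref{dichot2} and \cref{gamhom2}, together with the fact that every positive $\nu$-harmonic function on $Y$ is an integral of minimal ones. Before invoking \cref{dichot2}, I would first check that its standing hypothesis holds: namely, that the cofiniteness assumption forces $Y$ to be $\nu$-recurrent mod $\Gamma$. Indeed, since $\nu$ is $\Gamma$-invariant it descends to an irreducible random walk on the finite set $\Gamma\backslash Y$, and an irreducible random walk on a finite set is recurrent; hence the path started at any point of $Y$ hits every $\Gamma$-orbit almost surely, so each orbit is $\nu$-recurrent. This is precisely the recurrence-mod-$\Gamma$ required by \cref{dichot2}.

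Next I would control the ratios $\gamma^*h/h$. Since $\Gamma$ is Abelian, every $\gamma\in\Gamma$ lies in the center of $\Gamma$, and since the action is cofinite we have $\Gamma\backslash Y$ finite. I would therefore apply \cref{gamhom2} with $Y$ and $\nu$ in place of $X$ and $\mu$; its proof uses only the $\Gamma$-invariance and irreducibility of the walk, both of which $\nu$ enjoys. This yields that $\gamma^*h/h$ is bounded for every positive $\nu$-harmonic function $h$ on $Y$ and every $\gamma\in\Gamma$. In particular this holds for every minimal positive $\nu$-harmonic function, so the second alternative in \cref{dichot2} is ruled out, and each such function must be constant.

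Finally I would pass from minimal to arbitrary positive $\nu$-harmonic functions. If $Y$ is $\nu$-recurrent, then positive $\nu$-harmonic functions are already constant by \cref{swel4}, and there is nothing more to prove. If $Y$ is $\nu$-transient, then the Martin theory of \cref{submarb} applies to the $\nu$-random walk: any positive $\nu$-harmonic function $h$ with $h(x_0)=1$ admits an integral representation $h=\int_{\partial_\nu^{\min}Y}K(\cdot,\eta)\,\pi(d\eta)$ against a probability measure $\pi$ on the minimal Martin boundary. By the previous step each $K(\cdot,\eta)$ with $\eta\in\partial_\nu^{\min}Y$ is a minimal positive $\nu$-harmonic function normalized at $x_0$, hence equal to the constant function $1$; therefore $h=\int_{\partial_\nu^{\min}Y}1\,\pi(d\eta)=1$ is constant.

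I expect the only points requiring genuine care to be the reduction to minimal functions in the transient case (the recurrent case being immediate from \cref{swel4}) and the verification that cofiniteness supplies the $\nu$-recurrence mod $\Gamma$ needed to apply \cref{dichot2}; everything else is a direct appeal to \cref{gamhom2} and \cref{dichot2}.
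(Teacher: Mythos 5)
Your proof is correct and follows essentially the same route as the paper, which deduces the corollary directly from \cref{gamhom2} (boundedness of $\gamma^*h/h$ for central, hence all, $\gamma$) and \cref{dichot2}. The two verifications you flag as needing care --- that cofiniteness yields $\nu$-recurrence mod $\Gamma$, and the passage from minimal to arbitrary positive $\nu$-harmonic functions via the Martin representation (or \cref{swel4} in the recurrent case) --- are exactly the steps the paper leaves implicit, and you fill them in correctly.
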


For left-invariant Markov chains on groups,
\cref{dichot4} is well known and attributed to Choquet and Deny,
who characterize, more generally, solutions $\mu$ of the equation $\nu=\mu*\nu$
for general measures $\nu$ on locally compact Abelian groups \cite[Theorem 3]{CD}.

\begin{exa}
The function $e^x$ is a positive $\nu$-harmonic function on $\Z$
with respect to the non-symmetric probability measure $\nu$ supported on $\{-1,1\}$
with $\nu(-1)=e/(e+1)$ and $\nu(1)=1/(e+1)$.
\end{exa}

Using Margulis's reduction to the Abelian case \cite{Ma}, we have the following consequence of \cref{dichot4}.

\begin{cor}\label{nil2}
If $\Gamma$ is nilpotent and $\mu$ an irreducible symmetric probability measure on $\Gamma$,
then any positive $\mu$-harmonic function on $\Gamma$ is constant.
\end{cor}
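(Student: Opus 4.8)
The plan is to deduce the statement from the abelian case \cref{dichot4} by an induction on the nilpotency class of $\Gamma$, after realizing the left-invariant $\mu$-random walk as the $\Gamma$-invariant walk $\nu_y(z)=\mu(y^{-1}z)$ on $Y=\Gamma$, on which $\Gamma$ acts by left translation transitively, hence cofinitely. Since positive $\mu$-harmonic functions form a cone each of whose elements is an integral of its minimal (extreme) members, and since in the recurrent case every positive $\mu$-harmonic function is already constant (the random-walk analog of \cref{swel2} noted in \cref{swel4}), it suffices to treat a transient walk and a \emph{minimal} positive $\mu$-harmonic function $h$. For the base case, when $\Gamma$ is abelian, \cref{dichot4} applies directly with $Y=\Gamma$ and shows that $h$ is constant.

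For the inductive step, let $\Gamma$ be nilpotent of class at least two and let $Z$ be its nontrivial center, so that $\Gamma/Z$ is nilpotent of strictly smaller class. Taking $X=\Gamma$, so that $\Gamma\backslash X$ is a single point, \cref{gamhom2} shows that $\gamma^*h/h$ is bounded for every $\gamma\in Z$; and, exactly as in the proof of \cref{dichota}, boundedness of $\gamma^*h/h$ together with the minimality of $h$ forces this ratio to be a constant $c(\gamma)$, where $c\colon Z\to\R_{>0}$ is a homomorphism. Granting that $c\equiv 1$, the function $h$ is invariant under left translation by $Z$, hence descends to a function $\bar h$ on $\Gamma/Z$ which is again a minimal positive $\bar\mu$-harmonic function for the pushforward $\bar\mu$ of $\mu$; by the induction hypothesis $\bar h$, and therefore $h$, is constant.

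The main obstacle is precisely the vanishing of this central character, i.e. the claim $c\equiv 1$, and this is where the symmetry of $\mu$ and Margulis's reduction \cite{Ma} do the real work. A nontrivial $c$ would record exponential growth of $h$ along central directions, whereas the symmetry of $\mu$ produces a driftless walk; making this precise amounts to converting $c$ into a positive harmonic exponential for a suitable induced symmetric random walk on an abelian quotient of $\Gamma$, and then invoking the abelian statement \cref{dichot4} to force that exponential to be trivial. (Finite-order central elements satisfy $c=1$ automatically, since $c(z)^{\operatorname{ord}(z)}=c(e)=1$ with $c(z)>0$, so only the torsion-free central directions require this argument.) Setting up the correct induced abelian walk, and verifying that harmonicity and minimality descend to $\Gamma/Z$, is the technical heart of the reduction, which is why I would quote it from \cite{Ma} rather than reprove it; once it is in place the induction closes and every positive $\mu$-harmonic function on $\Gamma$ is constant.
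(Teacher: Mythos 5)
Your overall route---Margulis's reduction to the abelian case plus \cref{dichot4}---is exactly the paper's: its entire proof is the sentence preceding the corollary, quoting \cite{Ma} for the reduction and then applying \cref{dichot4}. The problem is your attempt to unpack that reduction into an induction on the nilpotency class: the inductive step has a genuine gap at precisely the point you flag, the triviality of the central character $c\colon Z\to\R_{>0}$. Everything around it is fine: \cref{gamhom2} plus minimality does give $\gamma^*h=c(\gamma)h$ for central $\gamma$, and once $c\equiv1$ the descent to $\Gamma/Z$ works (you do not even need $\bar h$ to be minimal, since the inductive hypothesis applies to all positive $\bar\mu$-harmonic functions). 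But the mechanism you propose for $c\equiv1$---converting $c$ into a positive harmonic exponential for a ``suitable induced symmetric random walk on an abelian quotient of $\Gamma$'' and invoking \cref{dichot4}---cannot be implemented with the paper's tools. \cref{dichot4} requires a cofinite action, and $Z$ has infinite index in $\Gamma$ in general. An induced (hitting) walk on $Z$ or on its orbits, as in \cref{balmu}, requires those orbits to be $\nu$-recurrent, which fails in general: for the $5$-dimensional integer Heisenberg group the center is $\Z$ and the quotient walk on $\Gamma/Z\cong\Z^4$ is transient, so the $Z$-orbits are not recurrent. Finally, $c$ is invisible in every abelian quotient of $\Gamma$ whenever $Z\subseteq[\Gamma,\Gamma]$, which is exactly the situation for the $3$-dimensional Heisenberg group, where $Z=[\Gamma,\Gamma]$; there no homomorphism of $\Gamma$ to an abelian group can see $c$ at all, yet proving $c\equiv1$ for that group is the whole difficulty.

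Consequently, what you would have to quote from \cite{Ma} is not a technical lemma inside your induction but the entire content of Margulis's theorem: positive $\mu$-harmonic functions on nilpotent groups factor through the abelianization (equivalently, minimal ones are multiplicative characters). Once that is quoted, your induction is redundant: one pushes $\mu$ forward to $\Gamma/[\Gamma,\Gamma]$ (the pushforward is again symmetric and irreducible) and concludes with \cref{dichot4}, which is the paper's proof. So either your argument is the paper's proof with superfluous scaffolding, or---if the citation is meant to cover only the ``induced abelian walk'' maneuver you describe---it has a hole at the central step that neither your sketch nor the paper's results can fill.
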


We now come to bounded harmonic functions.
We will need the following special case of \cite[Theorem 3.9]{Li}.

\begin{lem}\label{fchc4}
Assume that $\Gamma\backslash Y$ is finite.
Let $\sigma\in\Gamma$ and $h\in\mathcal{H}^\infty(Y,\mu)$.
Assume that $h(\gamma\sigma y)=h(\sigma\gamma y)$
for all $\gamma\in\Gamma$ and $y\in Y$.
Then $h(\sigma y)=h(y)$ for all $y\in Y$.
\end{lem}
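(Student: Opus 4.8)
The plan is to pass to the boundary and show that $h$ and its $\sigma$-translate $\sigma^*h$, where $(\sigma^*h)(y)=h(\sigma y)$, have the same asymptotics along the walk. View $h$ as a bounded $\nu$-harmonic function on $Y$ (via the extension \eqref{hext}) and let $(\omega_k)$ be the $\nu$-walk on $Y$ with laws $(P_y)$. Since $\nu$ is $\Gamma$-invariant by \eqref{gamin}, the function $\sigma^*h$ is again bounded $\nu$-harmonic, so by the martingale argument of \cref{subpoi} both $h(\omega_k)$ and $(\sigma^*h)(\omega_k)=h(\sigma\omega_k)$ converge $P_y$-almost surely, to limits $h_\infty$ and $(\sigma^*h)_\infty$, with $h(y)=\int_\Omega h_\infty\,dP_y$ and $(\sigma^*h)(y)=\int_\Omega(\sigma^*h)_\infty\,dP_y$. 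It therefore suffices to prove $(\sigma^*h)_\infty=h_\infty$ almost surely for every $y$, since integrating then gives $h(\sigma y)=h(y)$ for all $y$.

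First I would reformulate the hypothesis: substituting $y=\gamma^{-1}z$ in $h(\gamma\sigma y)=h(\sigma\gamma y)$ shows it is equivalent to $h(\gamma\sigma\gamma^{-1}z)=h(\sigma z)$ for all $\gamma\in\Gamma$ and $z\in Y$, that is, $h$ takes the same value at $\sigma z$ as at $cz$ for \emph{every} conjugate $c$ of $\sigma$. Next I would use cofiniteness: because $\Gamma\backslash Y$ is finite and $\nu$ is irreducible and $\Gamma$-invariant, the projection of $(\omega_k)$ to $\Gamma\backslash Y$ is an irreducible \emph{finite} Markov chain, hence recurrent, and every $\Gamma$-orbit is visited infinitely often $P_y$-almost surely. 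Writing $\omega_k=\gamma_k a_k$ with $a_k$ in a fixed set of orbit representatives, one has $\sigma\omega_k=\sigma\gamma_k a_k=(\gamma_k\sigma\gamma_k^{-1})\omega_k=c_k\omega_k$, so along the walk $\sigma$ acts through the conjugates $c_k=\gamma_k\sigma\gamma_k^{-1}$ of $\sigma$, and $\sigma\omega_k$ and $\omega_k$ always lie in the same orbit, projecting to the same point of the finite quotient.

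The heart of the matter, and the step I expect to be the main obstacle, is to upgrade this to genuine $\sigma$-invariance of the limit, i.e. $(\sigma^*h)_\infty=h_\infty$. In boundary terms the reformulated hypothesis only yields that the boundary functions of $\sigma^*h$ and of $c^*h$ coincide for every conjugate $c$ of $\sigma$, which translates into invariance of $h_\infty$ under the commutators $c^{-1}\sigma$; the difficulty is that these need not generate $\sigma$, and the conjugates $c_k$ driving the walk need not stay in a finite set, so no naive coupling of the two sequences $\omega_k$ and $\sigma\omega_k$ is available. Closing this gap is exactly the content of the special case of \cite[Theorem 3.9]{Li} being invoked, and to carry it out I would follow Li's argument: combining that $h(\omega_{k+1})-h(\omega_k)\to0$ almost surely (as $h(\omega_k)$ converges) with the recurrence of the finite quotient $\Gamma\backslash Y$ and the infinitely many returns to each orbit, one runs an averaging (ergodic) argument over these returns to conclude $h(\sigma\omega_k)-h(\omega_k)\to0$. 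Feeding this back into $h(\sigma y)-h(y)=\int_\Omega\big((\sigma^*h)_\infty-h_\infty\big)\,dP_y=0$ completes the proof.
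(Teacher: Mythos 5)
Your proposal does not actually prove the lemma, and it is worth noting first that the paper does not either: the lemma is quoted, without proof, as a special case of \cite[Theorem 3.9]{Li}, so in deferring the crucial step to Lin you are formally doing what the paper does. Judged as a proof attempt, however, there is a genuine gap exactly at the step you flag as the main obstacle, and that step \emph{is} the lemma. Your preparatory reductions are correct: the equivalence of the hypothesis with $h(\gamma\sigma\gamma^{-1}z)=h(\sigma z)$, the reduction to $(\sigma^*h)_\infty=h_\infty$, and the recurrence of the projected chain on the finite set $\Gamma\backslash Y$. (One slip: $\sigma\omega_k$ and $c_k\omega_k=\gamma_k\sigma a_k$ are in general \emph{different} points of $Y$, since $\sigma\gamma_k\ne\gamma_k\sigma$; only their $h$-values agree, and that equality is already an invocation of the hypothesis.) But the closing mechanism you describe cannot work as stated: from $h(\omega_{k+1})-h(\omega_k)\to0$ and infinitely many returns to each orbit one cannot control $h(\gamma_k\sigma a_k)-h(\gamma_k a_k)$, because the companion point $\gamma_k\sigma a_k$ is never required to lie on the trajectory, and no averaging over return times manufactures its value. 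Consecutive-difference decay along the path plus recurrence of the quotient are simply not enough information about $h$ off the path.

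The gap can be closed, in two ways. Deterministically (in the spirit of Lin's compactness argument): set $u=\sigma^*h-h$ and $s=\sup u$; choose $y_n=\gamma_na_n$ with $u(y_n)\to s$ and, by finiteness of $\Gamma\backslash Y$, arrange $a_n\equiv a$. The hypothesis gives $u(\gamma_ny)=h_n(\sigma y)-h_n(y)$ with $h_n=\gamma_n^*h$, which are uniformly bounded $\nu$-harmonic functions (here the $\Gamma$-invariance of $\nu$ enters); a diagonal argument yields a pointwise limit $h^*$, again bounded $\nu$-harmonic, and $u^*=\sigma^*h^*-h^*$ is harmonic with $u^*\le s$ and $u^*(a)=s$, hence $u^*\equiv s$ by irreducibility and the maximum principle. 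Then $h^*(\sigma^ny)=h^*(y)+ns$ for all $n$, and boundedness of $h^*$ forces $s=0$; the same argument applied to $\inf u$ gives $u\equiv0$. Alternatively, your probabilistic route can be repaired, but the correct tool is quadratic variation, not averaging: since $(h(\omega_k))$ is a bounded martingale, its conditional $m$-step variances $\sum_z\nu^m_{\omega_k}(z)\bigl(h(z)-h(\omega_k)\bigr)^2$ are almost surely summable in $k$, hence tend to zero; combining this with the uniform lower bound $\nu^{m_b}_{\gamma b}(\gamma\sigma b)=\nu^{m_b}_b(\sigma b)>0$, valid for every $\gamma\in\Gamma$ by $\Gamma$-invariance as $b$ ranges over the finite set of representatives, gives $h(\gamma_k\sigma a_k)-h(\omega_k)\to0$ almost surely, and the hypothesis rewrites $h(\gamma_k\sigma a_k)$ as $h(\sigma\omega_k)$, which is what you needed.
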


\begin{theopp}\label{fchc2}
If $\Gamma\backslash Y$ is finite,
then all bounded $\nu$-harmonic functions on $Y$ are invariant under the $FC$-hypercenter of $\Gamma$.
In particular, if $\Gamma$ is $FC$-hypercentral,
then $Y$ does not have non-constant bounded $\nu$-harmonic functions.
\end{theopp}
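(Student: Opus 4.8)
The plan is first to reduce the \lq\lq In particular\rq\rq\ clause to the main assertion. If $\Gamma$ is $FC$-hypercentral, then $\Gamma=\Gamma_{\lim}$, so the main assertion makes every bounded $\nu$-harmonic function $h$ on $Y$ invariant under all of $\Gamma$. Such an $h$ is constant on the (finitely many) $\Gamma$-orbits and hence descends to a bounded harmonic function for the induced Markov chain on the finite set $\Gamma\backslash Y$; since $\nu$ is irreducible, this quotient chain is finite and irreducible, so its only harmonic functions are the constants by the maximum principle. Thus $h$ is constant, and it remains to prove invariance under the $FC$-hypercenter.

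To prove this I would argue by transfinite induction on the upper $FC$-central series $(\Gamma_\alpha)$ of \eqref{fchc}, showing that every bounded $\nu$-harmonic function $h$ on $Y$ is invariant under $\Gamma_\alpha$ for all $\alpha$; since the series stabilizes at $\Gamma_{\lim}$, this gives the theorem. The case $\alpha=0$ is trivial, and for a limit ordinal $\alpha$ invariance under $\Gamma_\alpha=\cup_{\beta<\alpha}\Gamma_\beta$ is immediate from the inductive hypothesis. For the successor step $\alpha=\beta+1$, I would pass to the quotient: since $\Gamma_\beta$ is normal and $\nu$ is $\Gamma$-invariant, $\nu$ and the ($\Gamma_\beta$-invariant, by inductive hypothesis) function $h$ descend to an irreducible $\bar\Gamma$-invariant random walk $\bar\nu$ and a bounded $\bar\nu$-harmonic function $\bar h$ on $\bar Y=\Gamma_\beta\backslash Y$, where $\bar\Gamma=\Gamma/\Gamma_\beta$ acts with $\bar\Gamma\backslash\bar Y=\Gamma\backslash Y$ finite. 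Because $\Gamma_{\beta+1}/\Gamma_\beta=FC(\bar\Gamma)$, the successor step reduces to a single statement applied to the quotient data, namely the \emph{$FC$-center step}: if $\Gamma\backslash Y$ is finite and $h$ is bounded $\nu$-harmonic, then $h$ is invariant under $FC(\Gamma)$.

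For the $FC$-center step I would invoke \cref{fchc4}. The point is that, for a fixed $\sigma$, its hypothesis that $h(\gamma\sigma y)=h(\sigma\gamma y)$ for all $\gamma$ and $y$ is, after composing the equality of functions $h\circ(\gamma\sigma)=h\circ(\sigma\gamma)$ with $(\sigma\gamma)^{-1}$, exactly the statement that $h$ is invariant under every commutator $[\gamma,\sigma]=\gamma\sigma\gamma^{-1}\sigma^{-1}$, $\gamma\in\Gamma$. When $\sigma\in FC(\Gamma)$ these commutators run through the finite set $\{\,\tau\sigma^{-1}:\tau\in\text{conjugacy class of }\sigma\,\}$. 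In the transparent special case $\sigma\in Z(\Gamma)$ all of them are trivial, the hypothesis of \cref{fchc4} holds automatically, and \cref{fchc4} yields $h(\sigma y)=h(y)$ at once; thus the ordinary hypercenter is handled immediately.

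The main obstacle is the general (non-central) $FC$-center step, that is, establishing invariance of $h$ under the finitely many commutators $[\gamma,\sigma]$ before \cref{fchc4} can be applied to $\sigma$ itself; these commutators again lie in $FC(\Gamma)$, so there is no cheap descent. Here I would set up an auxiliary induction inside the normal closure $N$ of $\sigma$: since $N$ is generated by the finitely many conjugates of $\sigma$, each of finite conjugacy class, $N$ is a finitely generated $FC$-group and hence center-by-finite with finite commutator subgroup (B. H. Neumann). This lets me peel off the central directions of $N$ using the clean instance of \cref{fchc4} above and reduces the remaining difficulty to invariance under a finite normal subgroup, which is the delicate heart of the argument and where I would follow the treatment in \cite{LZ} and \cite{Li}. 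Once commutator-invariance is in hand, \cref{fchc4} gives $\sigma$-invariance, completing the $FC$-center step and hence the induction.
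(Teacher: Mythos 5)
Your reduction of the ``in particular'' clause, the transfinite induction along the upper $FC$-central series, and the reduction of the successor step to a single ``$FC$-center step'' (via descent to $\Gamma_\beta\backslash Y$) are all sound, and they parallel what the paper does more implicitly. The genuine gap is in the $FC$-center step itself, which is the heart of the matter and which your proposal does not close. Your plan for it has two problems. First, the ``peeling off the central directions of $N$'' step does not work as stated: the clean instance of \cref{fchc4} you established applies to $\sigma\in Z(\Gamma)$, whereas the elements you would need to peel off lie in $Z(N)$, the center of the normal closure $N$ of $\sigma$, and these are in general not central in $\Gamma$; nor can you apply \cref{fchc4} with $N$ as the acting group, since $N\backslash Y$ need not be finite (for instance, $N$ may be a finite group, or an infinite-index cyclic subgroup). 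Second, even granting that reduction, you defer invariance under finite normal subgroups to \cite{LZ} and \cite{Li}; that statement is not established anywhere in your argument (averaging over a finite normal subgroup produces an invariant harmonic function, but not invariance of $h$ itself), so the proposal is incomplete precisely at what you yourself call its ``delicate heart.''

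The missing idea is that \cref{fchc4} may be applied with respect to any finite-index subgroup of $\Gamma$, because such a subgroup still acts on $Y$ with finite quotient and leaves $\nu$ invariant. Given $\sigma\in FC(\Gamma)$, take $\Gamma'=C_\Gamma(\sigma)$, the centralizer, which has finite index since the conjugacy class of $\sigma$ is finite. Then $\Gamma'\backslash Y$ is finite, and the hypothesis $h(\gamma\sigma y)=h(\sigma\gamma y)$ of \cref{fchc4} holds trivially for all $\gamma\in\Gamma'$, so \cref{fchc4} applied to $\Gamma'$ gives $\sigma$-invariance at once: no B.~H.~Neumann, no finite normal subgroups, no peeling. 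This is exactly the paper's maneuver, run in a relative form: taking $N$ to be the normal subgroup of all elements leaving every $h\in\mathcal{H}^\infty(Y,\nu)$ invariant, and $\gamma$ any element with finite conjugacy class mod $N$, the preimage $\Gamma'$ of the centralizer of $\gamma N$ in $\Gamma/N$ has finite index, the commutators $[\gamma,\Gamma']$ lie in $N$ and hence leave every such $h$ invariant, and \cref{fchc4} applied to $\Gamma'$ yields $\gamma$-invariance. This shows $FC(\Gamma/N)$ is trivial, hence $\Gamma_\alpha\subseteq N$ for every ordinal $\alpha$, which subsumes your entire transfinite induction and also removes the need to descend the random walk to quotient spaces.
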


The arguments in the proof are taken from the proofs of Lemma 2.4 and Corollary 2.5 in \cite{LZ}.

\begin{proof}[Proof of \cref{fchc2}]
Let $N$ be a normal subgroup of $\Gamma$ such that all $h\in\mathcal{H}^\infty(Y,\nu)$ are $N$-invariant and $\gamma\in\Gamma$ have the property that its conjugacy class in $\Gamma/N$ is finite.
Then the centralizer of $\gamma$ in $\Gamma/N$ has finite index in $\Gamma/N$
and, therefore, its preimage $\Gamma'$ in $\Gamma$ finite index in $\Gamma$.
We have $\gamma\in\Gamma'$ and $N\subseteq\Gamma'$.
Now $\Gamma'\backslash Y$ is finite since $\Gamma'$ has finite index in $\Gamma$.
By definition, $[\gamma,\Gamma']\subseteq N$,
hence any commutator $\gamma\sigma\gamma^{-1}\sigma^{-1}$ with $\sigma\in\Gamma'$ leaves any $h\in\mathcal{H}^\infty(Y,\nu)$ invariant.
But then $\gamma$ leaves all $h\in\mathcal{H}^\infty(Y,\nu)$ invariant, by \cref{fchc4}.

Let $N$ now be the normal group of $\gamma\in\Gamma$ which leave all $h\in\mathcal{H}^\infty(Y,\nu)$ invariant.
Then, by the above, $FC(\Gamma/N)$ is trivial.
Hence $\Gamma_\alpha\subseteq N$ for any ordinal $\alpha$.

If $\Gamma$ is hypercentral,
then $h$ is $\Gamma$-invariant and has therefore at most $|\Gamma\backslash Y|$ many values.
In particular, $h$ has a maximum.
Hence $h$ is constant, by the maximum principle.
\end{proof}

\begin{theopp}\label{maram}
There is a $\Gamma$-invariant bounded projection $L^\infty(X)\to\mathcal{H}^\infty(X,\mu)$.
In particular, if the action of $\Gamma$ is proper and all bounded $\mu$-harmonic functions on $X$ are constant,
then $\Gamma$ is amenable.
\end{theopp}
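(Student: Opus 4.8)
The plan is to build the projection directly from the Markov operator of the induced walk on $X$ together with a generalized (Banach-type) limit, and then read off amenability from the special case where the bounded $\mu$-harmonic functions are just the constants. Write $P\colon L^\infty(X)\to L^\infty(X)$ for the Markov operator of $\mu$, namely $(Pf)(x)=\mu_x(f)=\sum_{y\in X}\mu_x(y)f(y)$. Then $P$ is a positive linear contraction with $P1=1$, its fixed points are exactly the elements of $\mathcal{H}^\infty(X,\mu)$, and, since $\mu$ is $\Gamma$-invariant (\cref{balmu}.\ref{balmui}), $P$ commutes with the $\Gamma$-action $(\gamma f)(x)=f(\gamma^{-1}x)$. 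Set $C_n=\tfrac1n\sum_{k=0}^{n-1}P^k$ and fix a generalized limit $\Lambda$ on $\ell^\infty(\N)$, that is, a positive linear functional with $\liminf\le\Lambda\le\limsup$ (so $\Lambda$ vanishes on null sequences and agrees with $\lim$ on convergent ones; such $\Lambda$ exists by Hahn--Banach). I would then define
\begin{align*}
 (Af)(x)=\Lambda\big((C_nf)(x)\big)_{n\ge1}.
\end{align*}

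Next I would verify that $A$ is a $\Gamma$-equivariant bounded projection onto $\mathcal{H}^\infty(X,\mu)$. Positivity, $A1=1$, and $\|A\|\le1$ are immediate from the corresponding properties of $\Lambda$ and the $C_n$. For harmonicity, note that $C_nf-PC_nf=\tfrac1n(f-P^nf)$ has sup-norm at most $2\|f\|_\infty/n$, so $\Lambda$ annihilates it and $(Af)(x)=\Lambda\big((PC_nf)(x)\big)_n$ for every $x$. It then remains to interchange $\Lambda$ with the average defining $P$, i.e.\ to show $\Lambda\big(\sum_y\mu_x(y)(C_nf)(y)\big)_n=\sum_y\mu_x(y)(Af)(y)$. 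This is handled by a uniform tail estimate: splitting the sum over a finite set $S\subseteq X$ and its complement, the finite part commutes with the linear $\Lambda$ and gives $\sum_{y\in S}\mu_x(y)(Af)(y)$, while the tail is bounded by $\|f\|_\infty\,\mu_x(X\setminus S)$ uniformly in $n$ and so contributes at most $\|f\|_\infty\,\mu_x(X\setminus S)\to0$ after applying $\Lambda$. Hence $PAf=Af$, so $Af\in\mathcal{H}^\infty(X,\mu)$. If $f$ is already harmonic then $C_nf=f$ and $Af=f$, so $A^2=A$ and $A$ is a projection. Equivariance follows because $C_n$ commutes with the $\Gamma$-action and $\Lambda$ is applied pointwise: $(A(\gamma f))(x)=\Lambda\big((C_nf)(\gamma^{-1}x)\big)_n=(Af)(\gamma^{-1}x)=(\gamma Af)(x)$.

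For the ``in particular'' clause, suppose every bounded $\mu$-harmonic function on $X$ is constant. Then $A\colon L^\infty(X)\to\R$ is a positive, unital, $\Gamma$-invariant functional, i.e.\ a $\Gamma$-invariant mean on $L^\infty(X)$. Since the action is proper and $X=\Gamma x_0$ is a single orbit, the stabilizer $\Gamma_0$ of $x_0$ is finite and $X\cong\Gamma/\Gamma_0$ as a $\Gamma$-set. Averaging each $f\in\ell^\infty(\Gamma)$ over the finite fibres of the projection $\Gamma\to\Gamma/\Gamma_0\cong X$ yields a well-defined $\bar f\in\ell^\infty(X)$, and $f\mapsto A(\bar f)$ is then a left-invariant mean on $\ell^\infty(\Gamma)$; hence $\Gamma$ is amenable.

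I expect the interchange of the generalized limit with the infinite average defining $P$ to be the only genuine obstacle, and it is precisely the summability of the probability measures $\mu_x$ together with the uniform bound $\|C_nf\|_\infty\le\|f\|_\infty$ that makes the tail estimate go through; everything else is a formal consequence of the elementary properties of $P$, the $C_n$, and $\Lambda$.
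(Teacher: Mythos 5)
Your proposal is correct and is essentially the paper's own argument: the paper applies a shift-invariant mean on $L^\infty(\N_0)$ to the sequence $k\mapsto\mu^k_x(f)$ --- your Cesàro averages $C_n$ together with a generalized limit are an interchangeable device for exactly the same purpose (shift-invariance of the mean plays the role of your telescoping $C_nf-PC_nf=\tfrac1n(f-P^nf)\to0$) --- and it establishes harmonicity of the output via precisely the tail estimate you describe, exhausting $X$ by finite sets to interchange the limiting functional with the sum defining the Markov operator, before deducing amenability by averaging over the finite stabilizers. The only substantive difference is that the paper's averaging map $E(f)(\gamma x)=\frac1{|\Gamma_x|}\sum_{\sigma\in\Gamma_x}f(\gamma\sigma)$, with $x$ running through a set of orbit representatives, handles an action with several orbits, whereas you assume $X=\Gamma x_0$ is a single orbit; this matches the intended application but is slightly less general than the paper's proof.
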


The proof of \cref{maram} consists of a translation of the proof of \cite[Theorem 3']{LS}
to the discrete case.

\begin{proof}[Proof of \cref{maram}]
Let $m\colon L^\infty(\N_0)\to\R$ be an invariant mean for the Abelian semigroup $\N_0$,
that is, $m$ is a functional with $m(1)=1$, $m(f)\ge0$ if $f\ge0$,
and $m(f_k)=m(f)$ for all $f\in L^\infty(\N_0)$ and $k\in\N_0$, where $f_k(l)=f(k+l)$.

Let $x\in X$ and $f\in L^\infty(X)$.
Then $f_x\colon\N_0\to\R$, $f_x(k)=\mu^k_x(f)$, belongs to $L^\infty(\N_0)$, and we set
\begin{align*}
	\hat m\colon L^\infty(X) \to L^\infty(X), \quad
	\hat m(f)(x) = m(f_x).
\end{align*}
Then $\|\hat m\|\le\|m\|=1$ and $\hat m(f)=f$ if $f$ is $\mu$-harmonic.
Moreover, $\hat m$ is $\Gamma$-invariant, since $\mu$ is $\Gamma$-invariant.
Furthermore,
\begin{align*}
	\hat m(\mu(f))(x) = m((\mu(f))_x)
	&= m((k\mapsto\mu^{k+1}_x(f))) \\
	&= m((f_x)_1) = m(f_x) = \hat m(f)(x)
\end{align*}
since $m$ is an invariant mean.
Now
\begin{align*}
	\mu(f)_x(k) &= \sum_{y,z\in X} \mu^k_x(y)\mu_y(z)f(z) \\
	&= \sum_{y,z\in X} \mu_x(y)\mu^k_y(z) f(z) \\
	&= \sum_{y\in X}\mu_x(y)f_y(k).
\end{align*}
For an exhausting sequence $F_1\subseteq F_2\subseteq\dots$ of finite subsets of $X$,
we then obtain
\begin{align*}
	|\mu(f)_x(k) - \sum_{y\in F_n}\mu_x(y)f_y(k)|
	\le \sum_{y\in X\setminus F_n}\mu_x(y) \|\mu^k(f)\|_\infty
	\le \sum_{y\in X\setminus F_n}\mu_x(y) \|f\|_\infty
\end{align*}
for all $k\in\N_0$.
Therefore
\begin{align*}
	\|\mu(f)_x - \sum_{y\in F_n}\mu_x(y)f_y\| \le \sum_{y\in X\setminus F_n}\mu_x(y) \|f\|_\infty.
\end{align*}
Since
\begin{align*}
	| \hat m(f)(x) - \sum_{y\in F_n}\mu_x(y)\hat m(f)(y) |
	&= |m(\mu(f)_x) - \sum_{y\in F_n}\mu_x(y)m(f_y) | \\
	&= |m(\mu(f)_x) - m(\sum_{y\in F_n}\mu_x(y)f_y) | \\	
	&\le \|m\| \|\mu(f)_x - \sum_{y\in F_n}\mu_x(y)f_y \| \\
	&\le \|m\| \sum_{y\in X\setminus F_n}\mu_x(y) \|f\|_\infty,
\end{align*}
which tends to $0$ as $n\to\infty$, we conclude that
\begin{align*}
	\hat m(\mu(f)_x) = \sum_{y\in X} \mu_x(f)\hat m(f)(y),  
\end{align*}
that is, $\hat m(f)$ is a $\mu$-harmonic function on $X$.
The first assertion follows.

If the action of $\Gamma$ is proper, then the isotropy groups $\Gamma_x$ of $\Gamma$ are finite.
Fixing a set $R\subseteq X$ of representatives of the $\Gamma$-orbits in $X$,
we define
\begin{align*}
	E \colon L^\infty(\Gamma) \to L^\infty(X), \quad
	E(f)(\gamma x) = \frac1{|\Gamma_x|}\sum_{\sigma\in\Gamma_x}f(\gamma\sigma),
\end{align*}
where $x\in R$.
Then $\|E\|\le1$ and $E(1)=1$.
Furthermore, $E$ is $\Gamma$-equivariant,
\begin{align*}
	E(\tau^*f)(\gamma x)
	&= \frac1{|\Gamma_x|}\sum_{\sigma\in\Gamma_x}(\tau^*f)(\gamma\sigma)\\
	&= \frac1{|\Gamma_x|}\sum_{\sigma\in\Gamma_x}f(\tau\gamma\sigma) \\
	&= E(f)(\tau\gamma x) 
	= (\tau^*E(f))(\gamma x).
\end{align*}
Hence $\hat m$ induces an invariant mean on $\Gamma$ if $\mathcal{H}^\infty(X,\mu)$ is trivial.
\end{proof}

\begin{rem}
For the second assertion of \cref{maram},
we only need that the stabilizers of the $\Gamma$-action on $X$ are amenable.
Then we would set $E(f)(\gamma x)=m_x(\gamma^*f)$,
where $x\in R$ and $m_x$ is an invariant mean for $\Gamma_x$.
\end{rem}



\end{document}